\newtheorem{thm}{Theorem}[section]
\newtheorem{cor}[thm]{Corollary}
\newtheorem{lem}[thm]{Lemma}
\newtheorem{rem}[thm]{Remark}
\newtheorem{exm}{Example}
\numberwithin{equation}{section}
\begin{document}
\title{{\bf 
Conformal geometry, Euler numbers, and
global 
invertibility 
in higher dimensions
}}
\author{{\bf Frederico  Xavier\thanks{Partial support provided by the John William and Helen Stubbs Potter Professorship.
}
}}

\date{}
\maketitle

\begin{quote}
%\vskip-35pt
\small {\bf Abstract}.   
\noindent  It is shown that  in dimension at least three  a local diffeomorphism of Euclidean $n$-space into itself is injective  provided that the  pull-back  of every plane is  a Riemannian submanifold  which  is conformal to a plane.  Using a similar technique one recovers the result that a polynomial local biholomorphism of complex $n$-space into itself is invertible   if and only if  the pull-back of every complex line is a connected rational curve. These results  are special cases  of  our main  theorem,  whose proof  uses  geometry,   complex analysis, elliptic partial differential equations, and topology.  
%As a refinement of the global invertibility problem, we address the question  of estimating the cardinality of a specific fiber $F^{-1}(q)$ of a locally invertible map in terms only of  objects naturally associated to $q$ itself.  Let $F:\mathbb R^n \to \mathbb R^n$  be  a local diffeomorphism,  $n>2$, and $q\in F(\mathbb R^n)$. As a special case of the main result, we  prove that $q$  is covered \it at most twice  \rm by $F$ if, for every plane $\pi$ that contains $q$,  
%$F^{-1}(\pi)$  is  conformal to $\mathbb C \mathbb P^1$ with $k_{\pi}< \infty $ points removed. When $k_{\pi}=1$ ($F^{-1}(\pi)\cong \mathbb C$) for every   
%such $\pi$, then $q$ is covered \it exactly once \rm  by $F$. Here, the complex structure of $F^{-1}(\pi)$ is associated to the  Riemannian metric induced by the standard inner product.   In particular, for  $n>2$ a local diffeomorphism $\mathbb R^n \to \mathbb R^n$  is   injective if the pull-backs of all affine planes are  Riemann surfaces conformal to $\mathbb C$. \rm  The  proofs  employ    arguments  from complex analysis, PDE,  geometry  and topology.  

\vskip1pt

\noindent \text{\bf 
AMS classification numbers: 14R15, 32H02, 53C21, 57R20 }
\end{quote}
\tableofcontents

\section{Introduction}

The   general question of deciding when a  locally invertible map  admits a smooth global  inverse is a recurring theme in mathematics,   about which  much remains to be understood,   even in  the finite dimensional case. It encapsulates neatly, in the  study of nonlinear systems  with as many variables as equations,  the questions of existence, uniqueness, and smooth dependence of the solution upon the  value in the target space. 

Given its wide scope, an all encompassing theory is not to be expected, but  along the years many intriguing  connections between global invertibility and  different fields (e.g algebra, algebraic and differential geometry, complex analysis, non-linear analysis, topology, dynamical systems, and mathematical economics) 
have  been unearthed. 
For a  small sample of works dealing with this problem,  where the search for new mechanisms of global invertibility is approached from a variety of points of view,   we refer the  reader to
\cite{AL},  \cite{B1} - \cite{HC}, \cite{CX} - \cite{GN}, \cite{AG} - \cite{GGJ}, \cite{FJ} - \cite {Ja}, \cite{BK}, \cite{GK}, \cite{MR} - \cite{NX2}, \cite{Pi} - \cite{X4}. 

In this paper we concentrate on the issue of  global  injectivity.  Our results, to be described shortly,  also establish a new connection between topology and conformal geometry. From a technical standpoint, the main tool is the theory of linear elliptic partial differential equations. 

As with many works on global invertibility,  the theorems in this paper  are of a ``foundational" nature, being part of  the overall effort of trying to understand the passage from local to global invertibility, in its many guises and for its own sake, without necessarily seeking to formulate  invertibility criteria that can be readily implemented. 

A historical example  showing   that  this point of view is a fruitful  one can be found  in  the pioneering works of J. Hadamard  \cite{JH} in 1904 and of H. Cartan  \cite{HC} in 1933. In modern language,  the Hadamard invertibility theorem states that a local diffeomorphism $F:X \to Y$ between Banach spaces is bijective if $\sup_{x\in X} ||DF(x)^{-1}||<\infty$  (see \cite{P}).    When $X=Y=\mathbb R^n$  
this analytic condition can be replaced by the  weaker requirement that all functionals $\langle F, v\rangle$, $v\neq 0$,  satisfy the Palais-Smale condition \cite{NX2}.

The above   condition in Hadamard's theorem is rather  strong, and this  limits its  use in applications. On the other hand, the  underlying ideas are of  great conceptual  value, as they are behind the  fundamental Cartan-Hadamard  theorem in differential geometry - the statement that the exponential map of a complete manifold of non-positive sectional curvatures is a covering map -, which captures the essence, in a global sense,  of non-positivity of curvature. 

A topological  result that subsumes   the  analytic result of  \cite{NX2} is the Balreira theorem \cite{B1},  stating that a local diffeomorphism  $F:\mathbb R^n \to \mathbb R^n$  is bijective if and only if the pre-image under $F$ of every affine hyperplane $H$ is non-empty and acyclic.

With the discovery of   subtler  manifestations of  the global invertibility phenomenon - a slow but steady  process -, increasingly more refined geometric and topological arguments  have been invoked. 

This can be seen in  the circle of ideas that originated in   \cite{JH} and \cite{HC},  starting from  simple covering spaces arguments (as in  \cite{P}),  and leading to:  degree theory  and the Palais-Smale condition 
in \cite{NX2};  foliations and intersection numbers in \cite {B1};  the Hopf fibration and the distortion theorems from the theory of univalent functions in \cite {NX1}; dynamics and horocycles in 
geometries of negative curvature in \cite{CX}, \cite{LX}, \cite{MX} and \cite{X4};  conformal geometry, partial differential equations  and  the Poincar\'e-Hopf theorem in the present paper. 
Needless to say,  
other authors have also made contributions from a topological standpoint 
to the global invertibility  program (see, for instance, \cite{R0}, \cite{R}).

The best known open problem in the area  of global invertibility is,  undoubtedly, the Jacobian conjecture ((JC), for short):
\vskip10pt 
\noindent \bf Conjecture (JC). \rm Every polynomial local biholomorphism $\mathbb C^n \to \mathbb C^n$ is invertible.
\vskip10pt
Hundreds of papers have been written about (JC), mostly couched in algebraic arguments (general references are \cite{BCW}, \cite{E}). 
On the other hand one should be cognizant of the fact that, despite casting  a long shadow on the subject,   (JC) is by no means the only outstanding  problem in the study  of global injectivity  and related issues.

 In order to dispel any misconception in this regard,  and  for the benefit of the reader,    
in  section 11  we compile a list of  roughly  fifteen research  problems,   from different areas of mathematics,  where the issue of   global injectivity  is at the forefront.

Returning  to the description  of the present work, we begin by refining the  global  injectivity  question:

\vskip10pt
\noindent \bf The singleton fiber  problem. \rm  For an individual point $q$ in the image of a local diffeomorphism $F:M^n \to N^n$ between non-compact  manifolds,  under what  extra conditions, involving only objects naturally associated to the point $q$ itself,    can one conclude that   the   fiber 
$F^{-1}(q)$  consists of a single point?
\vskip10pt

In order to take into account  the fact that the fibers of  a  local  diffeomorphism $F$  may have different sizes, as $F$ need not be a cover map,  (for instance, this happens in Pinchuk's famous counterexample to the  real Jacobian conjecture \cite {Pi}), ideally  any  additional  hypotheses required  to guarantee $\#F^{-1}(q)=1$   should   be sensitive to a possible change in the  cardinality  of nearby fibers,  and thus they should  involve only  objects   attached to $q$ itself. This is the rationale for including this condition in the above formulation of the singleton fiber  problem. The results in this paper reflect this point of view.

\vskip10pt

In the next section the reader  will find a short discussion of  the so-called reduction theorems in the Jacobian conjecture (\cite{BCW}, \cite{D}, \cite{Ja}),  which will offer further motivation  for the study of the singleton fiber  problem.

Our  aim  in this work is to  report on a    new connection between conformal geometry, topology,   elliptic partial differential equations,    and the singleton fiber problem  for a local diffeomorphism  $F:N^n\to \mathbb R^n$, $n\geq 3$. In the process, we will extend and give  a different perspective on some existing  theorems  on global  invertibility.

To  convey  the flavor  of our results,     we state  below a   special case of  an abstract  geometric-topological   criterion for a point to be covered only once (Theorem  \ref{geral}), that  also yields necessary and sufficient conditions for invertibility in the Jacobian conjecture (see Theorem \ref{Jac}   as well as   Theorem \ref{newJac}).

\begin{thm}  \label{apetizer} Let $F:\mathbb R^n \to \mathbb R^n$ be a  local diffeomorphism,  $n\geq 3$,  and $q\in F(\mathbb R^n)$.  If  the pre-image  of every plane containing $q$ is a Riemannian submanifold of Euclidean space that is conformal to $\mathbb R^2$, a once punctured sphere, then    $q$ is assumed exactly  once by  $F$.   
If, more generally, the pre-image of  every such plane is   conformal  to a finitely punctured sphere,  the number of punctures being allowed to vary with the plane, then  $q$ is assumed at  most twice
 by $F$. 
 \end{thm}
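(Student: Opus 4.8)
The plan is to fix $q$ and analyze, one at a time, the $2$-dimensional leaves $\Sigma_P:=F^{-1}(P)$ as $P$ ranges over the affine planes through $q$. Since $DF$ is everywhere an isomorphism, $F$ is automatically transverse to $P$, so $\Sigma_P$ is a smooth surface, closed in $\mathbb R^n$ (hence properly embedded), and $\pi_P:=F|_{\Sigma_P}\colon\Sigma_P\to P$ is itself a local diffeomorphism whose fiber over $q$ is precisely $F^{-1}(q)$. Thus it is enough to bound $\#\,\pi_P^{-1}(q)$, and I fix one such $P$. By hypothesis $\Sigma_P$ is conformally a sphere with $m=m_P<\infty$ punctures; in particular it has genus zero, $m$ ends, and $\chi(\Sigma_P)=2-m$. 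Uniformizing, I identify $\Sigma_P$ with $\widehat{\mathbb C}\setminus\{a_1,\dots,a_m\}$ and $P$ with $\mathbb C$ (sending $q$ to the origin), so that $\pi_P$ becomes a smooth, orientation-preserving local diffeomorphism $\phi$ of the punctured sphere into $\mathbb C$.

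The topological count comes next. Let $W$ be the radial source vector field on $P$ — a single zero at $q$ of index $+1$ — and put $V:=\phi^{*}W$. Since $\phi$ is an orientation-preserving local diffeomorphism, it conjugates $V$ near each preimage of $q$ to $W$ near $q$, so $V$ vanishes exactly on $F^{-1}(q)$, each zero of index $+1$, whence $\sum\mathrm{ind}(V)=\#F^{-1}(q)$ (once this number is known to be finite, which itself will fall out of the local analysis at the punctures). Capping each end of $\Sigma_P$ with a disk produces $S^2$; extending $V$ across the $j$-th cap contributes total index equal to the winding number $w_j$ of $V$ along a small loop $C_j$ around $a_j$, computed in a disk trivialization, and Poincar\'e--Hopf on $S^2$ gives
\[
\#F^{-1}(q)+\sum_{j=1}^{m}w_j=\chi(S^2)=2.
\]
Hence it suffices to prove $w_j\ge 0$ at every end (which yields $\#F^{-1}(q)\le 2$) and, when $m=1$, the sharper $w_1\ge 1$ (which, together with $\#F^{-1}(q)\ge 1$ since $q\in F(\mathbb R^n)$, forces $\#F^{-1}(q)=1$).

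Estimating the $w_j$ — equivalently, controlling the asymptotics of $\phi$ near each puncture — is the heart of the matter and, I expect, the main obstacle: this is exactly where non-properness of $F$, as in Pinchuk's counterexample, could in principle manufacture extra or even infinitely many preimages, and where the conformal hypothesis must be used quantitatively. Writing the Euclidean metric on $\Sigma_P\subset\mathbb R^n$ in a conformal coordinate near $a_j$ as $e^{2u}|dz|^2$, the fact that $\Sigma_P$ is conformally a punctured sphere constrains $u$ through the Gauss curvature equation $\Delta u=-K\,e^{2u}$; elliptic estimates for this equation then control the second fundamental form, hence the Gauss map of $\Sigma_P$, i.e.\ the family of tangent planes $T_x\Sigma_P=\bigl(DF(x)\bigr)^{-1}(\vec P)$ along the end, and therefore the winding of $x\mapsto DF(x)|_{T_x\Sigma_P}$ along $C_j$; combined with the topological argument principle — the winding of $\phi-q$ along $C_j$ equals the number of preimages of $q$ enclosed — this identifies $w_j$ and yields $w_j\ge 0$. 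To extract the sharp bound $w_1\ge 1$ for a single parabolic end I would bring in complex analysis: a value-distribution or normal-form statement ruling out essential-singularity behaviour of $\phi$ near an isolated end, in the spirit of Picard's theorem (or its quasiregular version, should $\phi$ turn out to have bounded dilatation). Since a single plane sees only so much, I expect this last step — the promotion of ``$\le 2$'' to ``$=1$'' — to require feeding in the hypothesis for all planes in a neighbourhood of $P$, so that a pathological end over $P$ would force pathological ends over the nearby planes and a contradiction would follow.
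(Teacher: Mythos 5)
There is a genuine gap, and it is fatal to the strategy rather than a missing detail: the theorem cannot be proved one plane at a time. Your entire count takes place on a single leaf $\Sigma_P$ and applies Poincar\'e--Hopf to the capped-off sphere, the key claims being that the winding numbers $w_j$ at the ends are defined and satisfy $w_j\ge 0$ (with $w_1\ge 1$ when $m=1$). This is false. Consider the realification of $\exp:\mathbb C\to\mathbb C$: it is a local diffeomorphism of $\mathbb R^2$ whose domain, with the flat metric, is conformal to $\mathbb C$, i.e.\ a once-punctured sphere, yet every $q\neq 0$ has infinitely many preimages; the zeros of your pulled-back radial field accumulate at the puncture, so $w_1$ is not even defined, and for any non-injective example with finite fibers the end must contribute a very negative index. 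The same phenomenon is not excluded for a leaf $F^{-1}(P)\subset\mathbb R^n$: the restriction $F|_{\Sigma_P}$ is merely a smooth local diffeomorphism, neither conformal nor of bounded dilatation, so the intrinsic parabolicity of $\Sigma_P$ imposes no constraint of the kind you need on the asymptotics of $F|_{\Sigma_P}$ at the ends. A telling symptom is that your argument never uses $n\ge 3$, yet the statement is false for $n=2$ (see the Remark following the theorem): for $n=2$ your computation would ``prove'' injectivity of every local diffeomorphism of the plane whose domain is conformally $\mathbb C$, which $\exp$ refutes.

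The hypothesis must therefore be exploited for a topologically nontrivial family of planes simultaneously, and that is what the paper does. Assuming two (resp.\ three) preimages $a,b$ (resp.\ $p_1,p_2,p_3$) of $q$, it attaches to each plane $\pi$ parallel to a tangent plane of a fixed closed orientable surface $M^2\subset\mathbb R^n$ with $\chi(M^2)\neq 0$ a canonical positive harmonic function on $F^{-1}(q+\pi)$ minus a neighbourhood of one preimage: a Green's-function-type object (via the classification of positive harmonic functions on the punctured disc and B\^ocher's theorem) in the once-punctured case, and the solution of a condenser Dirichlet problem in the finitely punctured case. The conformal hypothesis enters exactly to guarantee that the gradient of this function at the remaining preimage is nonzero. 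Pushing these gradients forward by $dF$ produces a continuous nowhere-vanishing section of $TM^2$, contradicting Poincar\'e--Hopf on $M^2$ --- not on the leaf. The continuity of that section as the plane varies, established through Harnack, Schauder and boundary oscillation estimates, is where the bulk of the work lies and has no counterpart in your outline.
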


\vskip10pt
\begin{rem}  \rm The hypothesis that $n$ should be at least $3$ is essential. In fact, every non-injective local diffeomorphism $\mathbb R^2\to \mathbb R^2$  constitutes a counterexample in two dimensions.
We observe also that if $F:\mathbb R^n \to \mathbb R^n$ is  a  local diffeomorphism,  $n\geq 3$,  $q\in F(\mathbb R^n)$, and the pre-image of every plane $\pi\ni q$ is homeomorphic to $\mathbb R^2\setminus D_{\pi}$  where $D_{\pi}$ is discrete,  but $\#D_{\alpha}=\infty$ for at least one  plane $\alpha\ni q$, then the fiber $F^{-1}(q)$  may actually be infinite \cite{B}.
\end{rem}
\vskip10pt

The proof of Theorem \ref{apetizer}  is based on a geometric construction involving the Poincar\'e-Hopf theorem  on the relation between zeros of vector fields and the Euler  characteristic,  the  classical 
B{\^o}cher theorem on the structure of isolated singularities of positive harmonic functions in the plane,  condenser-like Dirichlet problems on Riemann surfaces, and the general theory of linear elliptic differential equations. 

The  underlying ideas are  rather conceptual, but the proof that certain sections of vector bundles are continuous require considerable work, as they involve using elliptic estimates on patches of manifolds, together with  compactness arguments to extract convergent subsequences of subsequences, etc. These  technical matters  will be discussed in section $4$ and beyond.

Since the ideas in this  work come  from several areas (complex analysis, algebraic and differential geometry,  topology and partial differential equations),  we  
provide  as much context and  supply  as many details as  reasonably possible.  In the same spirit, we hope that the broad list  of problems compiled in section 11 can  be useful to a reader interested in  other  aspects of global invertibility.

The author would like to  express his heartfelt gratitude to   his 
coauthor and now colleague,  
Scott Nollet,    for countless  enlightening conversations on the many facets of the theory  of global invertibility. Thanks are also due to E. Cabral Balreira for explaining to us the Gale-Nikaido conjecture (see \cite{GN} and section 11 of the present paper), and to Luis Fernando  Mello for his sustained interest in this work,  as well as for  explaining in detail   the Pinchuk maps \cite{MX}, \cite{Pi}.

This work  is affectionately dedicated to  my grandchildren, Lucca and Luna, ages 6 and 4. Hopefully  they will enjoy the mildly  anthropomorphic picture  in section 7.

\section {Vestigial forms of injectivity} 

In this section we discuss several examples to motivate the study of the problem of deciding when an individual fiber of a local diffeomorphism is  a singleton, paving the way for the results  to be  proved in later sections.

In Example 1 we recall the  reduction theorems from  the algebraic study of the Jacobian conjecture (JC).   This will be used to further motivate the study of the singleton fiber problem that was stated in the Introduction. In Example 2 we recall a result from \cite{NX1}, to the effect that a map as in (JC) is invertible if and only if the pre-images of complex lines are connected rational curves.

Examples 3 and  4  are  a preamble for Theorem \ref{apetizer}. In Example 5 we point out that any map as in (JC) is  ``coarsely  injective", in the sense that the pre-images of any \it real hyperplane \rm  is always connected (on the other end of the dimension spectrum, actual injectivity  is equivalent  to the condition that the pre-image of any \it point \rm  is connected). In Example 6 we discuss a simple conceptual application of differential geometry to the singleton fiber problem in the complex-analytic context.

\subsection{Maps with unipotent Jacobians}
\begin {exm} \label {example 1}  \rm 
Consider a polynomial  map $F=I+H:\mathbb R^n \to \mathbb R^n$, $n\geq 2$ arbitrary, where $H$ is homogeneous of degree $3$ and  $DH(x)$  is a nilpotent operator for every  $x$.
In particular,  the only eigenvalue of $DF(x)=I+DH(x)$ is $1$,  and so $F$ is a local diffeomorphism. 
Differentiating $H(tx)=t^3H(x)$ and setting $t=1$ one obtains the Euler relation $DH(x)x=3H(x)$, and from it   the functional equation
$DF(x)x=3F(x)-2x$.
It follows  from the latter that $0$ is covered exactly  once by $F$, that is $F^{-1}(0)=\{0\}$. Indeed,  if $x\neq 0$ and $F(x)=0$,  one has $DF(x)x=-2x$, contradicting the fact  that  $\text{Spec}\;DF(x)=\{1\}$. 

At first glance  the above  example seems  contrived, but  this is hardly the case. Rather than being  special artifacts, these  maps  actually incorporate    the general case of  the  above mentioned  Jacobian conjecture (JC) in algebraic geometry, thanks to the so-called reduction theorems of Dru\.{z}kowski \cite{D} and Jag\v{z}ev \cite{Ja} (see also \cite{BCW} and  \cite{E} for related developments).

The most  basic version of the  reduction theorems states that in order to prove (JC)   it suffices to establish injectivity for  all polynomial local biholomorphisms $G:\mathbb C^n \to \mathbb C^n$ of the form $G=I+K$, where $n$ is arbitrary,   $K$ is homogeneous of degree $3$,  and $DK(z)$ is nilpotent for every $z\in \mathbb C^n$.  We shall refer to such a 
$G$ as a Dru\.{z}kowski-Jag\v{z}ev map. In fact, in \cite{D}  the map $K$ is further refined to be  cubic linear.

Since the ground field is algebraically closed,  nilpotency is actually a consequence of homogeneity and local invertibility.  Conceivably, this (apparently new) observation can  be used to simplify the proofs of the reduction theorems.

To see this, and arguing more generally, let $G=I+K:\mathbb C^n \to \mathbb C^n$ be  a local biholomorphism,  where $K$ is homogeneous of degree $k>1$. Assume the existence of 
$z_0\in \mathbb C^n$, $\lambda \in \mathbb C \setminus \{0\}$, and $v\in \mathbb C^n \setminus \{0\}$ such that $DK(z_0)v=\lambda v$. If $\sigma\in \mathbb C$,  
\begin{eqnarray*} DG(\sigma z_0)v=v+DK(\sigma z_0)v=v+\sigma^{k-1}DK(z_0)v=(1+\sigma^{k-1}\lambda)v.\end{eqnarray*}
Since $\lambda\neq 0$, one can choose $\sigma$ so that $1+\sigma^{k-1}\lambda=0$. Hence  $DG(\sigma z_0)v=0$, contradicting the fact that $G$ is everywhere a local biholomorphism.  Hence, for every $z\in \mathbb C^n$, every eigenvalue $\lambda$ of $DK(z)$ must be zero. Thus,  homogeneity of $K$ implies that $DK(z)$ is nilpotent, as claimed in the previous paragraph.

Let now $G:\mathbb C^n \to \mathbb C^n$ be an arbitrary  holomorphic map, not necessarily polynomial,  $z_0\in \mathbb C^n$ and $\widehat {G}: \mathbb R^{2n}\to \mathbb R^{2n}$ its realification, where $\mathbb C^n$ is identified with $\mathbb R^{2n}$ in the usual way. The Jacobian determinants of these maps are related by the well-known formula,  
 a consequence of the Cauchy-Riemann equations:
\begin{eqnarray} \label {dois det} \text{det} D\widehat{G}(z_0) = |\text{det}  DG(z_0|^2. \end{eqnarray}

\noindent Applying (\ref{dois det}) to $G-\lambda I_{\mathbb C^n}$, first with $\lambda\in \mathbb R$, one has
\begin{eqnarray*}  \text {det}  (D\widehat{G}(z_0) -\lambda I_{\mathbb R^{2n}})=   \text{det} (D\widehat{G-\lambda I_{\mathbb C^{n}}  })(z_0) =
|\text{det} (DG(z_0)-\lambda I_{\mathbb C^n})|^2.
\end{eqnarray*}
From  
\begin{eqnarray*} |\text{det} (DG(z_0)-\lambda I_{\mathbb C^n})|^2= [\text{det} (DG(z_0)-\lambda I_{\mathbb C^n})]\overline{[\text{det} (DG(z_0)-\lambda I_{\mathbb C^n})]}  
\end{eqnarray*}
and
\begin{eqnarray*} \overline{\text{det} (DG(z_0)-\lambda I_{\mathbb C^n})} =\text{det} (\overline {DG(z_0)}-\lambda I_{\mathbb C^n}) 
\end{eqnarray*}
one has 
\begin{eqnarray} \label{real and complex}\text {det}  (D\widehat{G}(z_0) -\lambda I_{\mathbb R^{2n}})=[\text{det} (DG(z_0)-\lambda I_{\mathbb C^n})] 
[\text{det} (\overline {DG(z_0)}-\lambda I_{\mathbb C^n}) ], \;\;\lambda \in \mathbb R.
\end{eqnarray}
For $z_0$ fixed, (\ref{real and complex}) is an equality  between polynomials in $\lambda\in \mathbb R$, and so the identity persists  for  $\lambda \in \mathbb C$ as well. 

Observing that, for $\lambda \in \mathbb C$,  
\begin{eqnarray*} \text{det} (\overline {DG(z_0)}-\lambda I_{\mathbb C^n})= \overline { [\text{det} (DG(z_0)-\overline{\lambda} I_{\mathbb C^n})]},
\end{eqnarray*}
one can rewrite the analogue of (\ref{real and complex}) for $\lambda \in \mathbb C$ as
\begin{eqnarray}\label {de novo} \text {det}  (D\widehat{G}(z_0) -\lambda I_{\mathbb R^{2n}})=[\text{det} (DG(z_0)-\lambda I_{\mathbb C^n})] 
\overline { [\text{det} (DG(z_0)-\overline{\lambda} I_{\mathbb C^n})]}, \;\; \lambda \in \mathbb C.
\end{eqnarray}
As an immediate consequence of (\ref{de novo}), one has 
\begin{eqnarray} \label{conjugate}\text{Spec}[D\widehat{G}(z_0)]= \text{Spec} [DG(z_0)] \cup \overline{ \text{Spec} [DG(z_0)]}. \end{eqnarray}

If we now take $G=I+K$ to be a Dru\.{z}kowski-Jag\v{z}ev map in the discussion above, so that $DK(z)$ is nilpotent, it follows from (\ref{conjugate}) applied to $K$ that in 
$D\widehat G(z_0)= I +D\widehat K(z_0)$ the operator $D\widehat K(z_0)$ is nilpotent as well. 

In particular, the discussion in the first paragraph of this section, which was valid for real  maps,    applies when one takes $F=I+H=I+\widehat K: \mathbb R^{2n} \to \mathbb R^{2n}$. Consequently, $0$ is covered only once by $\widehat G$, the same being true of $G$ of course. 

After one uses the fact that a polynomial local biholomorphism is a cover map away from a certain algebraic hypersurface,   one has the following (known) invertibility criterion in (JC):

 \vskip10pt
 \noindent $\bf (\dagger)\rm $  \rm  (JC)  holds if and only if  one can show,  for any given   Dru\.{z}kowski-Jag\v{z}ev map $G=I+K:\mathbb C^n \to \mathbb C^n$,   that there exists some  neighborhood $U$ of $0$ in $\mathbb R^{2n}$  for which  $\underline{every}$   $q\in U$  is covered exactly once by the induced $\underline{real}$ map 
$\widehat G=I +\widehat K:\mathbb R^{2n}\to \mathbb R^{2n}$. \rm
\vskip10pt

The  simple-minded  argument using Euler's relation  from the beginning of this section  shows that  $q=0$ is covered only once by $\widehat G$,  but says nothing of the sort  if  $q\approx 0$.

In the context of the Jacobian conjecture injectivity implies surjectivity, and so $\bf (\dagger)\rm $   lends extra support  to  the need to investigate, even in the general case of local diffeomorphisms -  as opposed to just local biholomorphisms -, the sensitive question of  how the cardinality varies when one considers nearby  fibers. This is, in essence, the idea behind the singleton fiber  problem. 
\end{exm}

\vskip10pt

\subsection{The gradation   $(\diamondsuit_d)_{d=0}^{n-1}$  for coarse injectivity}

In order to provide context for the theorems in this paper, let us first consider the injectivity question from a  na\" \i ve topological standpoint.

It is  nearly tautological   that a point $q$ in the image of a local diffeomorphism $F: N^n \to \mathbb R^n$ is covered only once if (and only if) $F^{-1}(q)$ is connected. One is then led to the following   question, where $d$ is a non-negative integer, $0\leq d\leq n-1$:
\vskip10pt
\noindent   \bf Definition/Question (Coarse injectivity). \rm  If   the pre-image under a local diffeomorphism $F:N^n \to \mathbb R^n$ of all $d$-dimensional affine subspaces through $q$  are  connected, then one says that $(F, q)$ satisfies property $(\bf \diamondsuit_d)$\rm.  As observed above, property 
$(\bf \diamondsuit_0)$ \rm is equivalent to $\#F^{-1}(q)=1$. 
If  $d>0$, does  $(\bf \diamondsuit_d)$  imply $\#F^{-1}(q)=1$? \rm This question will be taken up in the next subsections. The answers  that we will obtain along the way will  lead us to the new theorems proven in this paper.
\vskip10pt

\begin{exm} \label {example 2} \rm 
The main impetus   for  exploring the idea of injectivity as a connectedness  phenomenon  stems from    the Jacobian Conjecture, since there are   powerful  results   in the literature about the connectedness of  algebraic varieties (Bertini theorems). 
An example in the algebraic setting  of the link between connectedness  and injectivity, albeit not directly motivated by  \bf $(\bf \diamondsuit_d)$\rm,  is the following injectivity criterion in the Jacobian conjecture, first established in \cite{NX1} as a consequence of a more general theorem
(see also Theorem \ref{Jac} in the present   paper for a new  proof): 
\vskip10pt
\noindent $\bf (\dagger \dagger)\rm $\rm A  polynomial local biholomorphism $F:\mathbb C^n \to \mathbb C^n$, $n>1$,  is invertible if and only if for the general  point $q$ in $F(\mathbb C^n)$, the pre-image under $F$ of every complex line containing $q$ is homeomorphic to an open domain  of $\mathbb R^2$ (i.e. the pre-image is a connected rational curve, in the language of algebraic geometry). \rm
\vskip10pt   
More generally, the main result in \cite{NX1} asserts the injectivity of  any    local biholomorphism $V^n \to \mathbb C^n$, $n>1$, for which the pre-images of  all complex lines  are biholomorphic to $\mathbb C\mathbb P^1$  punctured finitely many times, the number of punctures being allowed to vary with the complex line.
\end{exm}

\subsection{$(\diamondsuit_1)$   $\Longrightarrow (\diamondsuit_0)$ \rm}
\begin{exm}\label {example 3}  \rm 
\vskip10pt

As observed before,  \bf $(\bf \diamondsuit_0)$ \rm is equivalent to $q$ being covered only once.  Elementary arguments show that 
$(\diamondsuit_1)$   $\Longrightarrow (\diamondsuit_0)$ in the following form:  $\# F^{-1}(q)=1$   provided that 
$\underline{\rm some}$   $\underline {\rm line}$    through $q$  has a connected pre-image.  
To see this, let $l$ be a line containing $q$ for which $F^{-1}(l)$ is connected. By the classification theorem for connected $1$-manifolds, $F^{-1}(l)$ is diffeomorphic either to $\mathbb R$ or to a circle $S^1$. The restriction map 
$F^{-1}(l) \stackrel {F}\to l$ is a local diffeomorphism,  so  its image is an open set. If $F^{-1}(l)\approx S^1$, the image of the restriction map would be compact  (hence closed) and also open. 
By connectedness,  the image  would  be  the entire line $l$ and yet compact, a contradiction. It follows that  $F^{-1}(l)\approx \mathbb R$. Since a local diffeomorphism of $\mathbb R$ into itself is  strictly monotonic, hence injective,  $F^{-1}(l) \stackrel {F}\to l$  must also be  injective, and  therefore  $q$ is covered exactly once by $F$.  This establishes 
 \bf $(\bf \diamondsuit_1)$ \rm. 

Dynamically,  \bf $(\bf \diamondsuit_1)$ \rm can be understood as follows. Consider the two rays in $l$ with initial point $q$, and assume $a, b\in F^{-1}(l)$ are distinct  points mapped into $q$. Start lifting through $a$ the ray in $l$ that corresponds to moving along $F^{-1}(l)$ in the direction of $b$. If the ray cannot be lifted in its entirety then the lift rushed to infinity in finite time, in which case it passes through $b$. Evidently,  the lift also  contains $b$ if the ray can be lifted in its entirety. Hence, regardless, the maximal lift  through $a$ of the appropriate ray starting at  $q$ must contain 
$b$. Looking at the images, since $F(a)=F(b)=q$,  this means that at first the   image of the lift moved away from $q$,  along the line $l$,  but then it ``switched directions" and came back  to $q$. Hence,   the differential of $F$ must have become singular somewhere in $F^{-1}(l)$, contradicting the fact that $F$ is everywhere a local diffeomorphism.
\end{exm}

As the previous argument relies on  monotonicity,   one might expect the answer to  whether \bf $(\bf \diamondsuit_2)$ \rm  implies \bf $(\bf \diamondsuit_0)$ \rm to be  negative. This is indeed the case,  as shown by  the example below.
\vskip10pt
\subsection {Generic unknots and  $( \diamondsuit_2)$}
\begin{exm} \label{example 4}  

\rm We  exhibit  a local diffeomorphism $F:N^3\to \mathbb R^3$  for which  the fiber of some  $q\in F(N^3)$  has infinitely many  points, and yet  the pre-image of 
 every plane  passing through $q$ is a connected surface.

Start with   the  specific unknot $\Gamma\subset \mathbb R^3$  depicted in   Fig.1. Positioning $\Gamma$  appropriately, vis-\`a-vis the origin $0$,  the  following conditions  hold  for any two-dimensional vector subspace $V\subset \mathbb R^3$:  $0\notin  \Gamma$, $V\cap \Gamma$ is  non-empty,   finite, and there is at least one  point in 
$\Gamma \cap V$ where the intersection is transversal.

\vskip 5pt
\vspace {0.1cm}
\begin{figure}[!h]
\vspace{0.1cm}
\begin{center}
\includegraphics[scale=0.25]{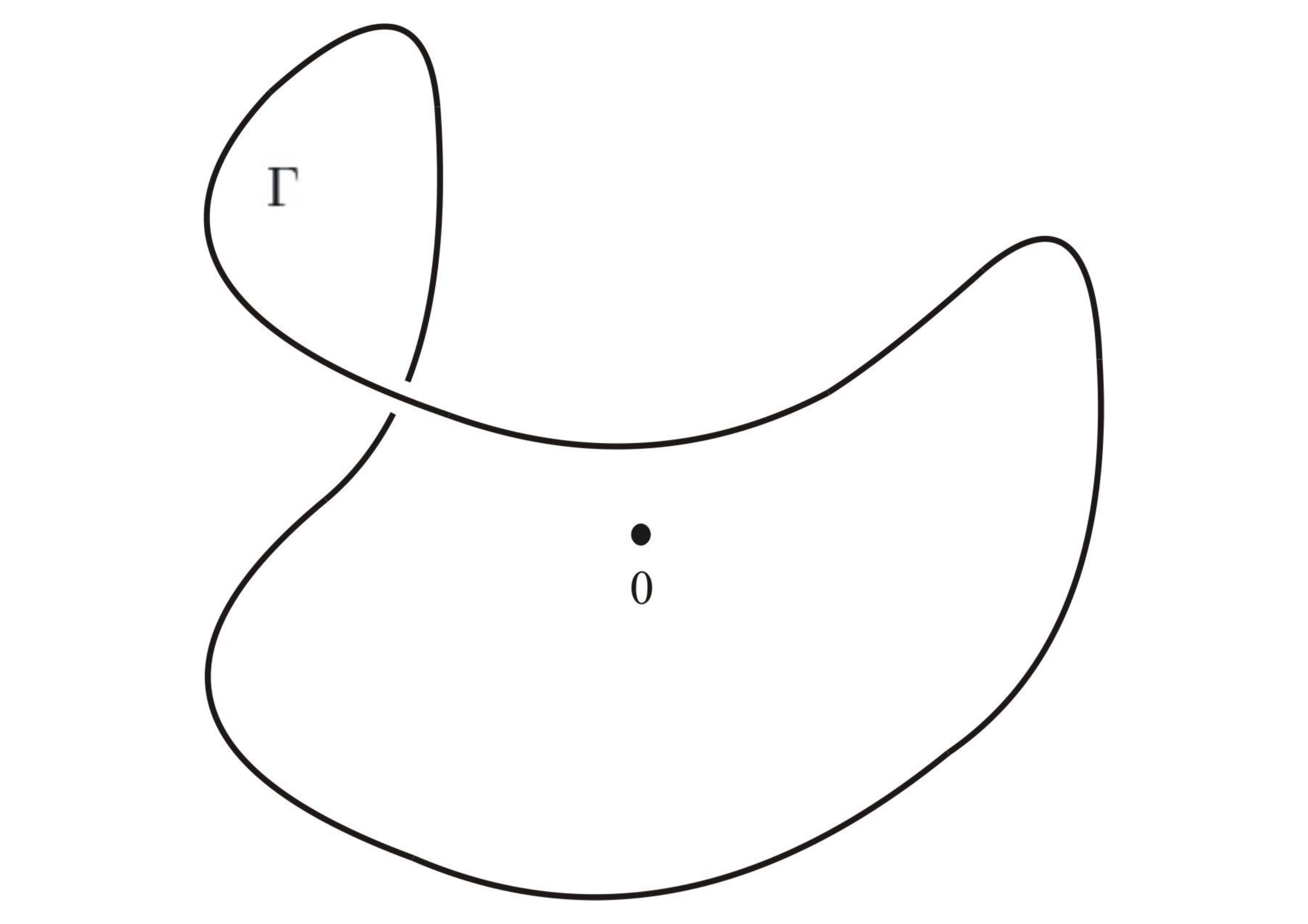}
\vspace{-0.4cm}
\vskip10pt
\caption{\footnotesize {For every plane $V$ through $0\notin \Gamma$,    $V \pitchfork_p \Gamma$ for at least one $p$ in the finite set $V\cap \Gamma$.}}
\label{Fig:02}
\end{center}
\vspace{-0.4cm}
\end{figure}

\noindent Let  $F:N \to \mathbb R^3 -\Gamma\subset \mathbb R^3$  be the universal cover map. As $\pi_1(\mathbb R^3-\Gamma, 0)= \mathbb Z$,   $F^{-1}(0)$ is infinite. 
The reader is asked to visualize that, given any   two-dimensional vector subspace $V$ of $\mathbb R^3$,    $A_V \stackrel{def}=V- (V \cap \Gamma)$ is connected.
 Also, a   ``thin" loop in $A_V$ that is  based  at $0$ and encircles only once some  point where the intersection $V\cap \Gamma$ is transversal  represents  a generator of  $\pi_1(\mathbb R^3-\Gamma, 0)$. 
It follows that the inclusion
$i:A_V \to \mathbb R^3 -\Gamma$ induces  an epimorphism
$ i_{*}: \pi_1(A_V, 0)\to \pi_1(\mathbb R^3-\Gamma, 0)$,
and so     $F^{-1}(V)=F^{-1}(A_V)$ is connected  \cite[p.179]{M}. Hence  $(\bf \diamondsuit_2)$ does not imply $(\bf \diamondsuit_0)$  (see also \cite{B} and problem xiv), section 11).

In section 4 we introduce a condition stronger than the topological condition $(\bf \diamondsuit_2)$, of  a conformal nature,   whose validity implies that $F^{-1}(q)$ is a singleton.
\end{exm}

\subsection{$(\diamondsuit_{\text{2n-1}})$ holds in  (JC)}

\begin{exm} \label{example 5} \rm 
  
Since  \bf $(\bf \diamondsuit_0)$ \rm and  \bf $(\bf \diamondsuit_1)$ \rm \rm both lead to  the fiber being  a singleton, but \bf$(\bf\diamondsuit_2)$ \rm does not, one can  think of  the property that the  pre-images of all $d$-dimensional affine subspaces are connected, for some $d$ in the range   $2\leq d \leq n-1$,      as a  coarse  form of injectivity.  

It is  amusing   that  this vestigial form of injectivity  always holds in the Jacobian conjecture setting,   for   the top value  $d=2n-1$. Of course, (JC) claims that  $(\bf \diamondsuit_d) $ already holds  at the level $d=0$.    

\vskip10pt
\begin{thm} \label{Bertini}  If $\widehat F:\mathbb R^{2n}\to \mathbb R^{2n}$ is the realification of a polynomial local biholomorphism 
$F:\mathbb C^n \to \mathbb C^n$, then $(\widehat {F})^{-1}(H^{2n-1})$ is connected for every real hyperplane  $H^{2n-1} \subset  \mathbb R^{2n}$. \end{thm}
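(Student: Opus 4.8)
The plan is to realize $\widehat F^{-1}(H)$ as the complement, inside a connected affine subvariety of $\mathbb{C}^n$, of a complex hypersurface, and then invoke the fact that removing a complex-codimension-one set from a connected complex manifold leaves it connected. First I would observe that a real hyperplane $H \subset \mathbb{R}^{2n} = \mathbb{C}^n$ is of the form $H = \{\, z : \mathrm{Re}\,\ell(z) = c \,\}$ for some nonzero $\mathbb{C}$-linear functional $\ell$ and real constant $c$; after an affine change of complex coordinates we may take $\ell(z) = z_1$, so $H = \{\mathrm{Re}\,z_1 = c\}$. Composing $F$ with this $\mathbb{C}$-affine isomorphism of the target (which does not change connectedness of preimages, and keeps $F$ a polynomial local biholomorphism), we reduce to showing $\widehat F^{-1}(\{\mathrm{Re}\,w_1 = c\})$ is connected, i.e.\ that $\{\, z \in \mathbb{C}^n : \mathrm{Re}(F_1(z)) = c \,\}$ is connected, where $F_1$ is the first coordinate of $F$.

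Next I would use holomorphicity of $F_1$ to pass from this real-analytic hypersurface to a genuine complex picture. The key point is that the harmonic function $u(z) = \mathrm{Re}\,F_1(z)$ on $\mathbb{C}^n$ has no critical points on the level set $\{u = c\}$: indeed $DF(z)$ is everywhere invertible, so $dF_1(z) \neq 0$, hence the real gradient of $\mathrm{Re}\,F_1$ is nonzero at every point (as $\mathrm{Re}$ of a nonzero $\mathbb{C}$-linear form on $\mathbb{C}^n$ is a nonzero $\mathbb{R}$-linear form). Thus $\{u = c\}$ is a smooth real hypersurface, and more importantly $F_1 - c$ is a submersion near it, so the fibration $\mathbb{C}^n \dashrightarrow \mathbb{C}$ given by $F_1$ is, locally, a product with the half-plane $\{\mathrm{Re}\,\zeta = c\}$ replaced by its full preimage. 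The cleanest route: show $\{\mathrm{Re}\,F_1 > c\}$ and $\{\mathrm{Re}\,F_1 < c\}$ each have the same number of connected components as their closures, and that $\{\mathrm{Re}\,F_1 = c\}$ is the common topological boundary through which these glue; then connectedness of $\{\mathrm{Re}\,F_1 = c\}$ follows once one knows the two open pieces are each connected and "see" each boundary component. I expect the technically cleanest formulation is to invoke a Bertini-type / Stein-theoretic statement: the map $F_1: \mathbb{C}^n \to \mathbb{C}$ being a submersion along a fiber-interval makes $\{\mathrm{Re}\,F_1 = c\}$ a fiber bundle over the line $\{\mathrm{Re}\,\zeta = c\} \cong \mathbb{R}$ with fiber the smooth affine hypersurface $\{F_1 = t\}$ for $\mathrm{Re}\,t = c$; and a smooth connected-or-not affine hypersurface in $\mathbb{C}^n$ that is a fiber of a polynomial submersion is connected because $\mathbb{C}^n$ is, and the generic fiber of a dominant morphism from an irreducible variety is irreducible (Bertini), while here \emph{every} fiber is smooth, hence each fiber is already irreducible, hence connected. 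Being an $\mathbb{R}$-bundle (contractible base) with connected fiber, the total space $\{\mathrm{Re}\,F_1 = c\}$ is connected.

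The main obstacle is justifying that \emph{every} fiber $\{F_1 = t\}$ of the polynomial submersion $F_1$ is connected (not merely the generic one), and that the restriction $F_1 : \{\mathrm{Re}\,F_1 = c\} \to \{\mathrm{Re}\,\zeta = c\}$ is genuinely a locally trivial fiber bundle rather than just a submersion — properness is not automatic for polynomial maps, so an Ehresmann-type argument needs care. I would handle the fiber-connectedness by a Stein-manifold argument: since $F = (F_1, \dots, F_n)$ is a local biholomorphism, after noting that $(F_1, \dots, F_n)$ realizes $\mathbb{C}^n$ as an (unbranched, but possibly not surjective) étale cover datum, each fiber $\{F_1 = t\}$ maps under $(F_2,\dots,F_n)$ with everywhere-invertible differential, and one shows it is connected by the same kind of monotone-lifting / no-return argument used for $(\diamondsuit_1)$ in Example 3, now applied to the restriction of $F$ to $F^{-1}$ of a complex line inside $\{w_1 = t\}$ — though here I would rather cite the known Bertini-connectedness of preimages of linear subspaces under étale-type maps, or appeal directly to the fact (used elsewhere in this paper's circle of ideas) that a polynomial local biholomorphism has connected preimages of affine subspaces of complex codimension one. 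Granting that, the bundle structure over the contractible real line is routine (the submersion has no vanishing-cycle monodromy obstruction because the base is an interval), and connectedness of $\widehat F^{-1}(H)$ follows.
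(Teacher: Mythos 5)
Your reduction to showing that $\{\,z : \mathrm{Re}\,F_1(z)=c\,\}$ is connected, and the idea of sweeping this set out by the complex fibers $\{F_1=t\}$, $\mathrm{Re}\,t=c$, match the paper. But the load-bearing step --- ``here \emph{every} fiber is smooth, hence each fiber is already irreducible, hence connected'' --- is false, and it is exactly the point the paper's proof is engineered to avoid. Smoothness of an affine hypersurface does not imply connectedness: the polynomial $p(x,y)=x(xy-1)$ on $\mathbb{C}^2$ has no critical points (its partials are $2xy-1$ and $x^2$, with no common zero), yet $p^{-1}(0)=\{x=0\}\cup\{xy=1\}$ is a disjoint union of two smooth curves. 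Likewise your citation of Bertini in the form ``the generic fiber of a dominant morphism from an irreducible variety is irreducible'' is not correct as stated ($p(x,y)=x^2$ is a counterexample); the correct input is the primitivity criterion, which gives only that \emph{all but finitely many} fibers are connected. Your fallback --- to ``appeal directly to the fact that a polynomial local biholomorphism has connected preimages of affine subspaces of complex codimension one,'' i.e.\ that \emph{every} fiber $\{F_1=t\}$ is connected --- is, as the paper itself points out in Section 10, an open problem even for Jacobian pairs in $\mathbb{C}^2$; only the all-but-finitely-many statement is known. So this route is circular at best.

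The paper closes the gap without needing every fiber connected and without any fibration or Ehresmann argument. From the absence of critical points of $F_1$ one gets that $F_1$ is primitive (it cannot factor as $q\circ r$ with $\deg q\ge 2$, since otherwise $\nabla F_1=q'(r)\nabla r$ would vanish where $r$ hits a zero of $q'$), so at most finitely many fibers $F_1^{-1}(it)$ are disconnected, while all fibers are non-empty by surjectivity of the non-constant polynomial $F_1$. Then, supposing $\widehat F^{-1}(H)=U_1\cup U_2$ is a disconnection, the parameter sets $J_k=\{t\in\mathbb{R} : F_1^{-1}(it)\cap U_k\neq\emptyset\}$ are open (because $F$ is an open map) and cover $\mathbb{R}$, hence $J_1\cap J_2$ is a non-empty open subset of $\mathbb{R}$ and therefore contains a parameter $\bar t$ avoiding the finitely many bad values; the fiber over $\bar t$ is then both connected and disconnected. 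If you want to salvage your write-up, replace the ``smooth $\Rightarrow$ irreducible'' step by the primitivity argument and replace the bundle-over-$\mathbb{R}$ step by this openness-of-$J_k$ argument.
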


This is a previously unpublished result  by S. Nollet and F. Xavier. Below, we provide the arguments when $n=2$. 

Write $z_1, z_2$ for the coordinates in the domain of $F=(F_1, F_2):\mathbb C^2\to \mathbb C^2$, and $w_1=u_1+iv_1$, $w_2=u_2+iv_2$  for the coordinates in the co-domain. The natural identification between $\mathbb C^2$  and $\mathbb R^4$ is $(w_1, w_2) \to (u_1, v_1, u_2, v_2)$.  We are supposed to show that $F^{-1}(H^3)$ is connected for every real hyperplane $H^3$. Choosing coordinates appropriately, it suffices to take $H^3$ to be the hyperplane $u_1=0$. 

Consider the projection  
$\Lambda :H^3\to \mathbb R$,  $\Lambda(0, t,  v_1, v_2) =t$. Under the identification,  $H^3$ is foliated by the complex lines $V_t=\{(w_1, w_2) \;|\; w_1=it\}$, 
$t \in \mathbb R$. Observe  that $F^{-1}(V_{t})=(F_1)^{-1}(it)$ is non-empty for every $t$ because the polynomial $F_1:\mathbb C^n \to \mathbb C$ is non-constant, hence surjective by the fundamental theorem of algebra. 

A complex polynomial $p(z_1, z_2)$ is said to be \it primitive \rm  if the fibers $\{p=c\} \subset \mathbb C^2$ are connected,  with the possible exception of finitely many values of $c$.  It is a classical result that if $p$ is $\underline{\rm not}$ primitive  then  it can be decomposed as $p=q\circ r$, where $q:\mathbb C \to \mathbb C$ is a polynomial of degree at least two, and $r$ is a polynomial in two variables. We claim  that if $p$ has no critical points, then $p$ is primitive. 

In order to see this, we argue by   contradiction.  In the decomposition  $p=q\circ r$ the polynomial $r$ is necessarily non-constant, and so it assumes all complex numbers.  The gradients satisfy $\nabla p(z)=q'(r(z))\nabla r (z)$.  If $z$ is chosen so that $r(z)$ is a zero of $q'$ (which is possible  because 
$\text {deg} \; q'\geq 1$),  then $\nabla p(z)=0$, a contradiction. This establishes our claim. 

Observe that the polynomial  $F_1$ has no critical points (otherwise $\text {det} DF$ would vanish somewhere) and therefore, by the previous paragraph, $F_1$  is primitive.  
In particular,  there areat most finitely many values of $t\in \mathbb R$, say $t_1, \dots, t_k$, for which the (necessarily non-empty) set $F^{-1}(V_{t_j})=(F_1)^{-1}(it_j)$ is disconnected. 

We can now show that $F^{-1}(H^3)$ is connected.  Again arguing by contradiction, write 
$$F^{-1}(H^3)=\displaystyle \bigcup_{t\in \mathbb R} F^{-1}(V_t)=U_1\cup U_2,$$ 
where $U_1$, $U_2$ are non-empty, open, and disjoint.  
Set 
$J_k= \{t\in \mathbb R : F^{-1}(V_t)\cap U_k\neq \emptyset\}.$
As remarked before,  $F^{-1}(V_t)\neq \emptyset$ for every $t\in \mathbb R$,   so that $J_1\cup J_2=\mathbb R$. Since $F$ is locally invertible, each $J_k$ is a non-empty open set. From the connectedness of $\mathbb R$, $J_1\cap J_2$ is a non-empty open set.  
In particular, we can select
 $\overline t \in J_1\cap J_2\setminus\{t_1, \dots, t_ k\}$.  It is now clear that 
 $F^{-1}(V_{\overline t})\cap U_1$ and $F^{-1}(V_{\overline t})\cap U_2$ form a disconnection of $F^{-1}(V_{\overline t})$, a contradiction. This shows  that $F^{-1}(H^3)$ is connected, concluding the proof of Theorem \ref{Bertini}.

\end{exm} 

\section{A differential-geometric mechanism  for injectivity}
\noindent \begin{exm} \label{example 6}  \bf  \rm
The following example from \cite{NX1} is  a conceptual  application of topology and differential geometry to the singleton fiber problem,  in the context of complex analysis. It is based on the fact that the Hopf map does not admit continuous sections.

\vskip10pt
\noindent ($\bullet $)  \rm Let $F:\mathbb C^n \to \mathbb C^n$, $n\geq 2$, be a local biholomorphism and $q$ a point in the image of $F$. If the pre-image of every complex line $l$ containing $q$ is connected and simply-connected, then $q$ is assumed exactly once by $F$. 
\rm
\vskip10pt

To see this, assume that $F(a)=F(b)=q$, $a\neq b$.  We may assume $q=0$.  By hypothesis, the complex curve $F^{-1}(l)$ contains $a$ and $b$ for every such $l$.  Since $F^{-1}(l)$ is properly embedded (because $F$ is a local bihilomorphism), it follows that $F^{-1}(l)$ is a complete, connected, simply-connected  Riemannian  surface of non-positive curvature. 

By the Cartan-Hadamard theorem  alluded to in the Introduction, there is a unique geodesic in $F^{-1}(l)$ that connects $a$ to $b$. Let $v_l$ be the unit tangent vector at $a$ of this geodesic. The set of complex lines passing through $q$ is naturally identified with $\mathbb C \mathbb P^{n-1}$. We can then define a map 
\begin{eqnarray} \label{Hopf} s:\mathbb C \mathbb P^{n-1} \to S^{2n-1}, \;\; s(l)=\frac{DF(a)v_l}{|DF(a)v_l|}\in l\cap S^{2n-1}.\end{eqnarray}
Consider the Hopf map $\pi:S^{2n-1}\to \mathbb C \mathbb P^{n-1}$ that associates to a point in the unit sphere the unique complex line passing through  it and the origin.   It follows from (\ref{Hopf}) that  $s$ is a section of $\pi$. Since one can argue that $s$ is continuous,  invoking the continuous dependence of solutions of systems of ordinary differential equations upon the initial conditions as well as parameters,  we arrive at a contradiction. 

A considerable elaboration of the above arguments leads to a sharp  estimate on the cardinality of the fibers of certain local biholomorphisms.  The theorem below is a special case of the main result of \cite{CX}:

\begin{thm} \label {fiber estimate} Let $F:\mathbb C^2 \to \mathbb C^2$  be a local biholomorphism, $d$ a positive integer, $q$ a point in the image of $F$, and $\mathcal L$ the set of all complex lines passing through $q$. Assume that :
\vskip5pt
\noindent i) For every $l\in \mathcal L$ the complex curve $F^{-1}(l)$ has at most $d$ connected components.
\vskip3pt
\noindent ii)    There exists a finite set $\mathcal F \subset \mathcal L$ such that $F^{-1}(l)$ is  $1$-connected  for all 
 $l\in \mathcal L -\mathcal F$.
\vskip5pt
\noindent Then the fiber $F^{-1}(q)$ has at most $d$ points.
\end{thm}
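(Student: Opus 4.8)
The plan is to generalize the Hopf-section argument from ($\bullet$) to the setting where fibers of lines may have several components and need not be simply connected, using a counting argument to bound $\#F^{-1}(q)$. Suppose for contradiction that $F^{-1}(q)$ contains $d+1$ distinct points $a_0,\dots,a_d$. The first step is to understand, for a generic line $l\in\mathcal L$, how the points $a_j$ are distributed among the connected components of $F^{-1}(l)$. For $l\notin\mathcal F$, each component of $F^{-1}(l)$ is a properly embedded, connected, $1$-connected Riemann surface in $\mathbb C^2$, hence (being a properly embedded complex curve) a complete Riemannian surface of non-positive curvature, so by Cartan--Hadamard it is uniquely geodesically connected; thus two of our points $a_i,a_j$ lying in the same component determine a unique geodesic between them. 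Since there are $d+1$ points but at most $d$ components, the pigeonhole principle guarantees that for every $l\notin\mathcal F$ at least two of the $a_j$ share a component.

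The trouble with raw pigeonhole is that \emph{which} pair collides can jump around as $l$ varies, so one cannot directly build a single global section. To fix this, I would fix the pair $a_0,a_1$ and consider the set $\mathcal L_{01}\subset\mathcal L$ of lines $l$ for which $a_0$ and $a_1$ lie in the same component of $F^{-1}(l)$. The key claim is that $\mathcal L_{01}$ (or some such pair-set) is \emph{large} in a topological sense — concretely, that the union over all pairs of the closures of these sets is all of $\mathcal L\cong\mathbb{CP}^1$, and that at least one $\mathcal L_{ij}$ contains a subset whose closure is not nullhomotopic, i.e. carries the fundamental class of $\mathbb{CP}^1$. Here one uses that $\mathcal L\setminus\mathcal F$ is connected with $\mathcal F$ finite, together with the fact, established as in ($\bullet$) via continuous dependence of geodesics on parameters, that on the open set where $a_i,a_j$ share a component the assignment $l\mapsto s_{ij}(l):=DF(a_i)v_l/|DF(a_i)v_l|$ is a continuous section of the Hopf map $\pi:S^3\to\mathbb{CP}^1$ over that open set. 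Since the Hopf map admits no continuous section over any open set whose inclusion into $\mathbb{CP}^1$ is surjective on $\pi_1$ — equivalently over a neighborhood of a generator circle — we get the desired contradiction.

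Carrying this out, the concrete steps are: (1) set up the $d+1$ assumed preimages and the pair-sets $\mathcal L_{ij}$, $0\le i<j\le d$, noting $\bigcup_{i<j}\mathcal L_{ij}\supseteq\mathcal L\setminus\mathcal F$ by pigeonhole and hypothesis i)--ii); (2) show each $\mathcal L_{ij}$ is open in $\mathcal L\setminus\mathcal F$ using properness of $F^{-1}(l)$ and continuous dependence — a component varies continuously, so ``$a_i,a_j$ in the same component'' is an open condition; (3) since finitely many open sets $\mathcal L_{ij}$ cover the connected open dense set $\mathcal L\setminus\mathcal F\subset\mathbb{CP}^1$, a Baire/covering-dimension argument forces at least one $\mathcal L_{ij}$ to contain an open subset $W$ whose inclusion $W\hookrightarrow\mathbb{CP}^1$ is $\pi_1$-surjective (e.g. $W$ contains a loop generating $\pi_1(\mathbb{CP}^1)$; this is where one must argue carefully, perhaps by noting a finite open cover of $S^2$ must have a member that is not simply connected or contains a non-contractible loop); (4) over $W$ build the continuous Hopf section $s_{ij}$ as in ($\bullet$), using uniqueness of the connecting geodesic and smooth dependence of solutions of ODEs on initial data and parameters; (5) derive the contradiction from nonexistence of such a section, since $s_{ij*}:\pi_1(W)\to\pi_1(S^3)=0$ cannot be a section of the $\pi_1$-surjection $\pi_*$.

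The main obstacle I expect is step (3)–(4): passing from ``the pair-sets cover'' to ``one pair-set contains a topologically substantial piece'' over which the section is genuinely defined and continuous. Pigeonhole only gives pointwise collisions, and the identity of the colliding pair can be locally constant but globally incoherent on the overlaps $\mathcal L_{ij}\cap\mathcal L_{kl}$; one must extract a single $\mathcal L_{ij}$ carrying enough of $\pi_1(\mathbb{CP}^1)$, and simultaneously verify that on that set the geodesic — hence $v_l$ and $s_{ij}(l)$ — depends continuously on $l$ even as $l$ approaches the bad set $\mathcal F$ or the boundary where the component structure changes. Handling the continuity of $s_{ij}$ near such degenerations, and ruling out that the geodesic escapes to infinity in the limit, is the delicate analytic heart of the argument, and is presumably where the ``considerable elaboration'' referred to in the text does its work.
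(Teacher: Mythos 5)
Your proposal does not close the argument, and the central topological step is wrong. The space of complex lines through $q$ in $\mathbb C^2$ is $\mathbb C\mathbb P^1\cong S^2$, which is \emph{simply connected}: $\pi_1(\mathbb C\mathbb P^1)=0$. Consequently the condition in your steps (3) and (5) --- finding an open set $W\subset\mathcal L_{ij}$ whose inclusion is ``$\pi_1$-surjective'' --- is satisfied by every subset and carries no obstruction whatsoever. Worse, the Hopf fibration $\pi:S^3\to S^2$ is a trivial circle bundle over any proper open subset of $S^2$, so it \emph{does} admit continuous sections over every such $W$; no contradiction can be extracted there. The genuine obstruction to a section of the Hopf map lives in $\pi_2$ (or equivalently in the Euler class of the associated circle/tautological line bundle): a section over all of $S^2$ would force the identity of $\pi_2(S^2)=\mathbb Z$ to factor through $\pi_2(S^3)=0$. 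This obstruction only bites if the section is defined on the \emph{entire} sphere, including the finitely many exceptional lines $\mathcal F$ --- exactly the set your construction excises. Note also that at an exceptional line $l\in\mathcal F$ the pigeonhole principle gives you two points $a_i,a_j$ in a common component, but hypothesis ii) gives no control there: that component need not be simply connected, so the Cartan--Hadamard unique geodesic, and hence your $s_{ij}(l)$, is not even defined at such $l$. So the proposal both invokes a vacuous obstruction and defers the only step where a real obstruction could be made to work.

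For comparison: the paper does not prove Theorem \ref{fiber estimate} at all --- it is quoted as a special case of the main result of \cite{CX} --- but section 8 of the paper indicates the correct mechanism. One builds the section away from the bad lines, multiplies by a cutoff vanishing at the exceptional set to obtain a global continuous section $\hat s$ of the tautological line bundle with isolated zeros, and then uses the Poincar\'e--Hopf count: the sum of the local indices must equal the Euler number $-1$. The work (the ``considerable elaboration'') is in controlling the local indices at the exceptional lines and at the points where the component structure degenerates, and in proving continuity of the section up to those degenerations; your plan contains no substitute for that index computation. A repaired argument must either (a) extend the section continuously and without zeros across $\mathcal F$, or (b) estimate the local indices at $\mathcal F$ well enough to contradict the Euler number $-1$; a $\pi_1$ argument over $\mathbb C\mathbb P^1$ can do neither.
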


That the theorem is sharp can be seen  by taking $q=0$ and $F(z, w)=(f(z), w)$,  where $f:\mathbb C \to \mathbb C$ is holomorphic, $f'$ is nowhere zero, and $\#f^{-1}(0)=d$ (see \cite{CX} for details).
\end{exm}

\section{\bf  A topological-conformal enhancement of  $(\bf\diamondsuit_2)$}

Notwithstanding Example \ref{example 4},  it is possible to strengthen   $(\bf\diamondsuit_2)$ so that the enhanced property implies $\#F^{-1}(q)=1$. Loosely speaking, one  needs to go  beyond mere  connectedness by requiring that    the   pre-images of all   planes  containing the point $q$  be   diffeomorphic   to $\mathbb R^2$ and  also ``conformally large".

It is  a classical result   that every orientable Riemannian surface $(M, g)$  can be given a Riemann surface structure, where  local holomorphic coordinates $z=x+iy$ can be introduced so that, relative to these so-called isothermal parameters the metric $g$  is locally conformally flat, meaning that it assumes the form
$g(x,y)=\lambda^2(x,y) (dx^2+dy^2)$
for some  smooth function $\lambda>0$ (\cite{C},   \cite[p.~19]{GMP},  \cite[Thm.~3.11.1] {J}).

 Unless otherwise stated, we shall agree that  in this paper  the conformal (or complex) structure associated  to any  orientable surface   $M$  embedded in some  euclidean space
$\mathbb R^k$   will be the one arising from  the Riemannian metric  on $M$  obtained by  the restriction of the standard inner product  of $\mathbb R^k$.
\vskip5pt
We write $q+\pi$ for the  plane containing the point $q$ that is parallel to the two-dimensional vector subspace $\pi$ of $\mathbb R^n$. As usual, $G_2(\mathbb R^n)$ stands for the Grassmannian of two-dimensional vector subspaces of $\mathbb R^n$.
The following result is   stated first in the language   of differential geometry. 
\begin{thm}\label{parabola}
Let   $F:\mathbb R^n \to \mathbb R^n$  be a local diffeomorphism, $n\geq 3$,    $q\in F(\mathbb R^n)$.  Assume the existence of  a   compact orientable surface $M^2$ embedded in $\mathbb R^n$  such that  $\chi(M^2)\neq 0$ and,  for every $\pi \in G_2(\mathbb R^n)$ that is parallel to some  tangent plane of $M^2$,    the surface $F^{-1}(q+\pi)\subset \mathbb R^n$   is conformally diffeomorphic    to $ \mathbb R^2$. Then   the fiber $F^{-1}(q)$ consists of a single point.
\end{thm}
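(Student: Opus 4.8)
The plan is to argue by contradiction, with the Poincar\'e--Hopf theorem supplying the final blow: assuming that $F^{-1}(q)$ contains two distinct points $a$ and $b$, I will build a \emph{continuous nowhere vanishing} tangent vector field on $M^2$, which is impossible because $\chi(M^2)\neq 0$. Normalize so that $q=0$. For each $\pi\in G_2(\mathbb R^n)$ parallel to a tangent plane of $M$, set $S_\pi:=F^{-1}(\pi)\subset\mathbb R^n$; by hypothesis $S_\pi$ is connected, simply connected, and conformally $\mathbb C$, and since $0\in\pi$ both $a$ and $b$ lie on $S_\pi$. Because $F$ maps $S_\pi$ into $\pi$, the isomorphism $DF(a)$ restricts to a linear isomorphism $T_aS_\pi\to\pi$. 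The key point is to attach to the pointed Riemann surface $(S_\pi,a,b)$ a \emph{canonical} nonzero vector $W_\pi\in T_aS_\pi$: since the conformal automorphism group of $\mathbb C$, the affine group $z\mapsto\alpha z+\beta$, acts simply transitively on ordered pairs of distinct points, there is a unique conformal diffeomorphism $\Psi_\pi\colon S_\pi\to\mathbb C$ with $\Psi_\pi(a)=0$ and $\Psi_\pi(b)=1$ (it automatically carries the unique end of $S_\pi$ to $\infty$), and I put $W_\pi:=\bigl(D\Psi_\pi(a)\bigr)^{-1}(1)\in T_aS_\pi\setminus\{0\}$. Defining $Y\colon M\to TM$ by $Y(p):=DF(a)\,W_{T_pM}$ then produces a tangent vector field with $Y(p)\in T_pM$ and $Y(p)\neq 0$ for every $p$; once $Y$ is known to be continuous, Poincar\'e--Hopf combined with $\chi(M^2)\neq 0$ yields the contradiction, so $\#F^{-1}(q)\le 1$, and equality holds because $q\in F(\mathbb R^n)$.

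To make $\Psi_\pi$, hence $W_\pi$, analytically tractable, I would realize it through potential theory on the embedded surface $S_\pi$. The function $\log|\Psi_\pi|$ is the unique harmonic function on $S_\pi\setminus\{a\}$ (up to an additive constant, pinned down by requiring it to vanish at $b$) that has a logarithmic singularity of unit strength at $a$ and at the end of $S_\pi$, with opposite fluxes. One produces it by solving condenser-type Dirichlet problems on the compact pieces obtained from $S_\pi$ by deleting small conformal disks around $a$ and around the end, and passing to the limit; B\^ocher's theorem on isolated singularities of positive harmonic functions — applied to this potential near $a$, where it tends to $-\infty$, and near the end, where it tends to $+\infty$ — certifies that the limit has exactly logarithmic singularities and nothing worse, while the flux normalization forces the harmonic conjugate to have period $2\pi$ around $a$. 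The exponential of the complexified potential is then a single-valued conformal map $S_\pi\to\mathbb C$, and rotating it so that $b\mapsto1$ gives a map which, by the maximum principle, must coincide with $\Psi_\pi$.

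The main obstacle, and the technical heart of the proof, is the continuity of $\pi\mapsto W_\pi$ — equivalently, the continuous dependence, in $C^1$ near $a$, of the potential $u_\pi$ on $\pi$ — made delicate by the fact that the surfaces $S_\pi=F^{-1}(\pi)$ themselves move and are noncompact. I would address this as follows: writing $\pi=\ker\ell_\pi$ with $\ell_\pi$ depending continuously on $\pi$, the $S_\pi$ are regular level sets of the smooth maps $\ell_\pi\circ F$, so $S_\pi\to S_{\pi_0}$ in $C^\infty_{\mathrm{loc}}$ as $\pi\to\pi_0$; the maximum principle and Harnack's inequality give bounds on $u_\pi$ uniform on compact pieces, and B\^ocher's description near $a$ and near the end, applied with uniform constants, controls the two singular regions; interior elliptic estimates on a fixed system of overlapping coordinate patches then upgrade these to $C^1$ bounds; and along any sequence $\pi_j\to\pi_0$ one extracts, via a diagonal argument over an exhaustion, a subsequence along which $u_{\pi_j}$ converges locally uniformly to a harmonic function carrying the same singular data, which by uniqueness is $u_{\pi_0}$. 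Since every subsequence has a further subsequence with this same limit, the full family converges, $Y$ is continuous, and the proof concludes.
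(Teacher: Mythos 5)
Your proposal is correct in outline and shares the paper's architecture exactly: argue by contradiction, extract a canonical nonzero tangent vector from the conformal rigidity of the pointed parabolic surface $(F^{-1}(\pi),a,b)$, prove continuity by a ``compactness $+$ uniqueness'' scheme (Harnack, interior elliptic estimates, a diagonal extraction over an exhaustion, identification of the limit via a uniqueness lemma), and finish with Poincar\'e--Hopf. Where you genuinely differ is in the choice of canonical vector. The paper deletes a small neighborhood $U$ of $a$ (with $F|_U$ a diffeomorphism onto a ball about $q$ and $b\notin\overline U$), observes that $F^{-1}(\pi)\setminus\overline{U}$ is conformal to the punctured disk, and takes the unique \emph{positive} harmonic function $u_\pi$ there which vanishes on the inner boundary, blows up at the end, and satisfies $u_\pi(b)=1$; the vector is $dF(b)\nabla u_\pi(b)$, evaluated at the \emph{regular} point $b$. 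You instead uniformize the whole surface by the unique $\Psi_\pi:F^{-1}(\pi)\to\mathbb C$ with $\Psi_\pi(a)=0$, $\Psi_\pi(b)=1$, and take $DF(a)\bigl(D\Psi_\pi(a)\bigr)^{-1}(1)$. Both constructions rest on the same rigidity (rotations of the punctured disk in the paper, simple transitivity of the affine group of $\mathbb C$ in yours), and both continuity proofs run on the same elliptic machinery, so the approaches are essentially interchangeable; the paper's normalization, however, buys two concrete simplifications that your sketch would have to replace. First, your potential $\log|\Psi_\pi|$ changes sign, so Harnack's inequality does not apply to it directly, whereas the paper's $u_\pi$ is positive and pinned at $b$, so Harnack chains immediately yield the locally uniform $C^0$ bounds feeding the Schauder estimates; you would need to reduce to a positive harmonic function or exploit monotonicity in the condenser exhaustion. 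Second, your vector involves $D\Psi_\pi(a)$ at the singular point of the potential, so continuity in $\pi$ requires controlling the finite part of $\log|\Psi_\pi|$ at $a$ together with the phase of its harmonic conjugate (e.g., via the removable-singularity trick and Cauchy estimates after locally uniform convergence of the maps $\Psi_{\pi_j}$ is established) — an extra step the paper avoids by differentiating at $b$. Finally, your uniqueness claim for the bipolar potential (difference of two candidates is a bounded harmonic function on a surface conformal to $\mathbb C$, hence constant, hence zero by the normalization at $b$) is correct but uses parabolicity in exactly the same essential way the paper's uniqueness lemma for positive harmonic functions on the punctured disk does; this is where the hypothesis ``conformal to $\mathbb R^2$ rather than to the disk'' enters in both arguments. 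None of these points is a gap in principle, but all three need to be made explicit in a complete write-up.
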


In complex-analytic terms, the hypothesis of Theorem \ref{parabola}  stipulates   that   the (necessarily non-compact) one-dimensional complex manifold  $F^{-1}(q+\pi)$   is   connected, simply-connected,   and biholomorphic  to $\mathbb C$ rather than to an open disc $D$ (by the Koebe uniformization theorem,   a $1$-connected Riemann surface is biholomorphic   to
$\mathbb C \mathbb P^1$, $\mathbb C$, or $D$).
\vskip5pt

The next result yields  information about the  cardinality of the fiber $F^{-1}(q)$ when $F^{-1}(q+\pi)$  is allowed to be conformal to a  sphere $S^2$  punctured finitely many times (and not  just  once,  as in Theorem \ref{parabola}):
\begin{thm} \label {parabola dois}   Let   $F:\mathbb R^n \to \mathbb R^n$  be a local diffeomorphism, $n\geq 3$,    $q\in F(\mathbb R^n)$.  Assume the existence of  a   compact orientable surface $M^2$ embedded in $\mathbb R^n$  such that $\chi(M^2)\neq 0$ and,  for every $\pi \in G_2(\mathbb R^n)$ that is parallel to some tangent plane of $M^2$,  there exist points $p_1, \dots, p_{k_\pi} \in S^2$, $k_{\pi} \geq 1$,   for which  the surface $F^{-1}(q+\pi)\subset \mathbb R^n$   is   conformally diffeomorphic   to
$S^2-\{p_1, \dots, p_{k_\pi}\}$.
Then   the fiber $F^{-1}(q)$ has at most two points.
\end{thm}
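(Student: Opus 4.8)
The plan is to run the same machinery that proves Theorem \ref{parabola}, but to track one extra unit of topological slack coming from the punctures. For each $\pi$ parallel to a tangent plane of $M^2$, the surface $\Sigma_\pi := F^{-1}(q+\pi)$ is, by hypothesis, conformally a finitely punctured sphere $S^2 \setminus \{p_1,\dots,p_{k_\pi}\}$. Suppose toward a contradiction that $F^{-1}(q)$ contains three distinct points $a_1, a_2, a_3$. Each $a_i$ lies in $\Sigma_\pi$ for every such $\pi$, since $q \in q+\pi$. The first step is the B\^ocher-theoretic one: using that $\Sigma_\pi$ is a properly embedded submanifold of $\mathbb R^n$ conformal to a punctured sphere, one solves on $\Sigma_\pi$ a condenser-type Dirichlet problem — roughly, a positive harmonic function (or a harmonic function with prescribed logarithmic singularities) adapted to the points $a_1, a_2, a_3$ and with controlled behavior at the punctures. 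The classical B\^ocher theorem identifies the local structure of such functions near an isolated puncture (a positive harmonic function near a puncture of a Riemann surface behaves like $c\log(1/|z|)$ plus a harmonic remainder), which is what lets one globalize. From the gradient of the resulting function one manufactures a tangent vector field, and hence a unit tangent section $v_\pi$ at one of the $a_i$, depending on $\pi$.

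The second step is the Poincar\'e--Hopf / Euler-number obstruction, exactly as in Theorem \ref{parabola}. As $\pi$ ranges over the planes parallel to tangent planes of $M^2$, the assignment $\pi \mapsto$ (a distinguished unit tangent direction built from the solution above) descends to a section of the appropriate sphere bundle over $M^2$ — concretely, pushing forward by $DF(a_i)^{-1}$ identifies the tangent plane of $q+\pi$ with a plane through $a_i$, and composing with the Gauss-type map $M^2 \to G_2(\mathbb R^n)$, $x \mapsto T_xM$, produces a nowhere-zero section of (a quotient/sub-bundle of) $TM^2$, or of its unit sphere bundle. Since $\chi(M^2) \neq 0$, no such nowhere-vanishing section exists, and that is the contradiction. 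The point where two fiber points are still tolerated is that with exactly two points $a_1,a_2$ one can only form an \emph{unoriented} line field (the geodesic/gradient flow line joining them has no canonical orientation), whereas with three points $a_1,a_2,a_3$ one gets enough combinatorial data — e.g. the three pairwise connecting arcs, or a canonically oriented direction singled out by a convexity/extremal argument among the three — to pin down a genuine nowhere-zero vector field rather than merely a line field; and $\chi(M^2)\neq 0$ already obstructs a nowhere-zero line field when $\chi$ is odd, and the vector-field refinement handles the general case of $\chi(M^2)\neq 0$.

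The technical heart — and the main obstacle — is showing that the section $\pi \mapsto v_\pi$ constructed from solutions of the Dirichlet problems is \emph{continuous} in $\pi$. This is exactly the difficulty flagged in the introduction: one must run elliptic estimates on coordinate patches of the varying surfaces $\Sigma_\pi$, show that as $\pi' \to \pi$ the surfaces converge (locally, in $C^\infty$, after reparametrization) to $\Sigma_\pi$, that the conformal uniformizations converge, and that the solutions of the condenser problems converge together with their first derivatives; compactness is used to extract convergent subsequences and a standard uniqueness argument upgrades subsequential convergence to full convergence. The presence of punctures, whose number $k_\pi$ may jump, is the extra wrinkle relative to Theorem \ref{parabola}: one needs the construction to be insensitive to punctures being created or destroyed in the limit, which is why B\^ocher's description of the singularities (and, presumably, a normalization that the relevant harmonic function extends across — or has integrable singularity at — the punctures) is essential. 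Once continuity of the section is in hand, the Euler-number contradiction is immediate, and the bound $\#F^{-1}(q) \leq 2$ follows.
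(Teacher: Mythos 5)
Your overall architecture matches the paper's: a condenser-type Dirichlet problem on $F^{-1}(q+\pi)$, a tangent vector field on $M^2$ built from the gradient of the solution pushed forward by $DF$, continuity via elliptic estimates on converging surfaces, and a Poincar\'e--Hopf contradiction with $\chi(M^2)\neq 0$. However, two essential points are missing or wrong. First, you never justify that the gradient of the condenser solution is nonzero at the evaluation point; you simply pass to ``a unit tangent section'' and later call the section ``nowhere-zero.'' This is exactly where the punctured-\emph{sphere} hypothesis must enter: after filling in the punctures (removable singularities of the bounded harmonic function), the domain of the condenser solution is a doubly-connected planar region, hence conformally a nondegenerate circular annulus, on which the solution is explicitly $u(z)=(\log a-\log|z|)/\log a$ with nowhere-vanishing gradient. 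For positive genus the gradient of a condenser solution necessarily has zeros (the paper discusses this in section 8), so without this step the argument does not close, and no alternative justification is offered in your write-up.

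Second, your explanation of why the conclusion is $\#F^{-1}(q)\le 2$ rather than $1$ --- an alleged distinction between an unoriented line field obtainable from two fiber points and an oriented vector field obtainable from three --- is not the actual mechanism and is also dubious on its own terms (a closed surface admits a nowhere-zero line field if and only if it admits a nowhere-zero vector field, namely iff $\chi=0$, so line fields buy nothing here). The real reason three points are needed is structural: the condenser consumes two fiber points $p_1,p_2$ as the two ``plates'' (one excises neighborhoods $U_1,U_2$ mapped diffeomorphically onto a small ball around $q$ and imposes boundary values $0$ and $1$ on $\partial U_1^{\pi},\partial U_2^{\pi}$), and a \emph{third} point $p_3$, lying in the domain of $u_\pi$ for every $\pi$, is required as the place where $\nabla u_\pi$ is evaluated to produce the vector $DF(p_3)\nabla u_{\pi_p}(p_3)\in T_pM^2$. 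With only two points in the fiber there is no canonical evaluation point and the construction cannot be launched; hence the contradiction only rules out fibers of cardinality $\ge 3$. Your continuity discussion is in the right spirit (and is in fact easier here than in Theorem \ref{parabola}, since $0\le u_\pi\le 1$ by the maximum principle removes the need for Harnack), but the two items above are genuine gaps.
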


Taking $M^2$ to be a compact orientable surface in $\mathbb R^n$ other than a torus in Theorems \ref{parabola} and \ref{parabola dois}, one has:
\vskip10pt
\begin{cor} \label{injectivity} Let $F:\mathbb R^n \to \mathbb R^n$ be a local diffeomorphism,  $n\geq 3$. If the pre-image under $F$ of every  affine plane  is a surface conformal to $\mathbb R^2$, then $F$ is injective.
\end{cor}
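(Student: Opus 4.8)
The plan is to deduce the Corollary directly from Theorem \ref{parabola}. Injectivity of $F$ amounts to the assertion that $\#F^{-1}(q)=1$ for every $q$ in the image of $F$, so I would fix such a $q$ and manufacture an auxiliary surface to which Theorem \ref{parabola} can be applied.

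First I would choose $M^2$ to be a round $2$-sphere of some radius, sitting inside a fixed $3$-dimensional affine subspace of $\mathbb R^n$; this is legitimate precisely because $n\geq 3$. Such an $M^2$ is a compact orientable surface embedded in $\mathbb R^n$ with $\chi(M^2)=\chi(S^2)=2\neq 0$, so the topological requirement in the hypothesis of Theorem \ref{parabola} is satisfied. (Any compact orientable embedded surface other than a torus would serve equally well, since the only constraint is $\chi\neq 0$, i.e. genus $\neq 1$.) Next I would verify the conformal requirement: Theorem \ref{parabola} asks that $F^{-1}(q+\pi)$ be conformally diffeomorphic to $\mathbb R^2$ only for those $\pi\in G_2(\mathbb R^n)$ parallel to a tangent plane of $M^2$, whereas the hypothesis of the Corollary grants this for \emph{every} affine plane, a fortiori for the sub-family coming from tangent planes of $M^2$. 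Hence both hypotheses of Theorem \ref{parabola} hold, and that theorem gives $\#F^{-1}(q)=1$. Since $q$ was an arbitrary point of $F(\mathbb R^n)$, $F$ is injective.

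Since all of the analytic, complex-analytic, and topological substance is already packaged inside Theorem \ref{parabola}, I do not expect a genuine obstacle here; the single point deserving a moment's attention is the existence of the auxiliary surface $M^2$, which is dispatched by the elementary observation that a $2$-sphere embeds in $\mathbb R^n$ as soon as $n\geq 3$ and has nonzero Euler characteristic. (If one only knew that the pre-image of every affine plane is conformal to a finitely punctured sphere, the same argument with Theorem \ref{parabola dois} in place of Theorem \ref{parabola} would yield $\#F^{-1}(q)\leq 2$ for all $q$.)
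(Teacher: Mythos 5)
Your proposal is correct and coincides with the paper's own derivation: the paper obtains Corollary \ref{injectivity} precisely by taking $M^2$ to be a compact orientable surface other than a torus (e.g. a sphere in a $3$-dimensional affine subspace, so $\chi(M^2)\neq 0$) and applying Theorem \ref{parabola} at each point $q$ of the image. Your parenthetical remark about the punctured-sphere case via Theorem \ref{parabola dois} likewise matches the paper's Corollary \ref{Corolario 2}.
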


\begin{cor} \label{Corolario 2} Let   $F:\mathbb R^n \to \mathbb R^n$  be a local diffeomorphism, $n\geq 3$.   If the pre-image under $F$ of every affine plane    is  a surface conformal  to a finitely punctured sphere, 
the number of punctures being allowed to vary with the plane,  
then   every point in $\mathbb R^n$  is covered at most twice  by $F$.
\end{cor}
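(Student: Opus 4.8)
The plan is to obtain Corollary \ref{Corolario 2} as a direct consequence of Theorem \ref{parabola dois}, the only genuine point to address being the production of a suitable reference surface $M^2$. Fix an arbitrary point $q \in \mathbb R^n$. If $q \notin F(\mathbb R^n)$ then $F^{-1}(q) = \emptyset$ and the conclusion holds vacuously, so we may assume $q \in F(\mathbb R^n)$.

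Next I would exhibit the surface $M^2$. Since $n \geq 3$, pick any $3$-dimensional affine subspace $W \subset \mathbb R^n$ and let $M^2 \subset W$ be a round $2$-sphere; this is a compact orientable surface embedded in $\mathbb R^n$ with $\chi(M^2) = 2 \neq 0$. The essential observation is that the particular choice of $M^2$ plays no role in checking the conformality hypothesis: by assumption the pre-image under $F$ of \emph{every} affine plane of $\mathbb R^n$ is a surface conformally diffeomorphic to some finitely punctured sphere $S^2 - \{p_1, \dots, p_{k}\}$, $k \geq 1$. In particular this holds for every plane of the form $q + \pi$ with $\pi \in G_2(\mathbb R^n)$ parallel to a tangent plane of $M^2$. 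Hence all the hypotheses of Theorem \ref{parabola dois} are satisfied with this $M^2$ and this $q$, yielding $\#F^{-1}(q) \leq 2$; since $q$ was arbitrary, every point of $\mathbb R^n$ is covered at most twice by $F$. (Taking $M^2$ to be any closed orientable surface of genus $\neq 1$ would work equally well, but the sphere is the most economical since it already embeds in $\mathbb R^3 \subseteq \mathbb R^n$.)

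The ``main obstacle'' is therefore not located in this deduction at all: the geometric and analytic substance is entirely absorbed into Theorem \ref{parabola dois}, whose proof carries the Poincar\'e-Hopf argument, the B\^ocher structure theorem for isolated singularities of positive harmonic functions, the condenser-type Dirichlet problems on Riemann surfaces, and the elliptic estimates. The only minor points worth spelling out in the write-up are that, because $F$ is a local diffeomorphism it is automatically transverse to every affine plane, so each $F^{-1}(q+\pi)$ really is an embedded $2$-dimensional submanifold of $\mathbb R^n$ carrying the induced Riemannian metric (as stipulated by the conventions of Section 4), which is what makes it meaningful to speak of its conformal type; and that the surface $M^2$ above may be chosen once and for all, independently of $q$, so the bound $\#F^{-1}(q) \leq 2$ is in fact uniform over the entire image of $F$.
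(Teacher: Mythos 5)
Your proposal is correct and coincides with the paper's own deduction: the text derives Corollary \ref{Corolario 2} exactly by taking $M^2$ to be a compact orientable surface in $\mathbb R^n$ other than a torus (e.g.\ a round sphere, so $\chi(M^2)\neq 0$) and applying Theorem \ref{parabola dois} to each $q\in F(\mathbb R^n)$, the hypothesis on \emph{every} affine plane covering in particular all planes $q+\pi$ with $\pi$ parallel to a tangent plane of $M^2$.
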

The two-dimensional analogue of Corollary \ref {injectivity}  fails  dramatically. Indeed,  $\underline {\rm every}$ non-injective local diffeomorphism
$F: \mathbb R^2 \to  \mathbb R^2$ provides a counterexample. Examples include the realification of the complex exponential and  the Pinchuk  counterexamples to the so-called real Jacobian conjecture \cite {Pi}.

A reasonable   conjecture  seems to be that  the estimate  $\#F^{-1}(q)\leq 2$ in Theorem \ref{parabola dois} can be improved to $\#F^{-1}(q)=1$,  a  result that  would of course be stronger than   the conclusion in Theorem \ref{parabola}.

The proof of Theorem \ref{parabola} is fairly long and technical, but we can  highlight  here some of  the main points.
The central idea is that if the fiber $F^{-1}(q)$ had at least two  distinct elements, then  one could construct a continuous nowhere zero
vector field  on $M^2$, thus contradicting the   Poincar\'e-Hopf theorem since $\chi(M^2)\neq 0$.

The construction of  this vector field  explores  the special  nature   of   the
isolated singularities of    positive harmonic functions   in the plane.
To establish   continuity  -- the more involved  part of the  proof --,   one  uses B{\^o}cher's theorem and   the    standard theory of  linear  elliptic  partial differential equations,  
after some technical difficulties have been overcome. 

In Section 7 we   give  a  fairly  conceptual  proof of Theorem  \ref{parabola dois}. It uses   the solutions of certain  Dirichlet problems on the Riemann surfaces $F^{-1}(q+\pi)$,  in a manner  reminiscent of the
notion of  \it condensers \rm in classical potential theory,  in order  to create, under the assumption that the fiber has at least three elements, a continuous non-vanishing vector field on $M^2$. This contradiction shows that the fiber  has at most two elements.

These results are actually special cases of a general abstract mechanism for global injectivity, codified in Theorem \ref{geral}, that  may yet be useful in other settings. With the aid of  the algebro-geometric result that the generic fiber of a polynomial local biholomorphism $F:\mathbb C^n \to \mathbb C^n$
cannot have cardinality two, Theorem \ref{geral}  yields  another  proof of the    necessary and sufficient condition for invertibility in (JC)  that  was first established in  \cite{NX1} (see Example \ref{example 2} in  section 2 and   Theorem \ref{Jac}).  

Theorem \ref{newJac} is a new result,  a necessary and sufficient condition for invertibility in the Jacobian conjecture  when  the polynomial map has $\underline{\rm real}$  coefficients. It stands as   a natural companion to Theorem \ref{Jac}.  Besides the condenser construction from section 7,  the  proof of Theorem \ref{newJac} uses ``symmetry" arguments and  the fact that the Euler  number of the tautological line bundle  is non-zero mod\;(2).

\section{Using B{\^o}cher's theorem    to   
create  vector fields 
}

In this section we begin the proof of Theorem \ref{parabola}, which is couched on two  closely related classical results about isolated  singularities of positive harmonic functions.  

Lemma \ref{segundo lema} will be used in this section in the course of a geometric construction aimed at establishing the  \it existence \rm  of certain vector fields on $M^2$. Lemma \ref{B} - actually a theorem of  B{\^o}cher -,   will play a central  role in the 
\it continuity \rm  proof of  the said vector field. This is  a lengthy argument, presented in section 6,     that  also uses  the general theory of linear elliptic equations.
\vskip5pt 

\noindent In what follows, we set   $B=\{z\in \mathbb C : |z|<1\}$.

\vskip10pt

\begin {lem} \label {segundo lema} \rm (\text  {\cite{ABR}, p. 52}) \it  Let  $u$  be  positive  and harmonic on   $B\setminus \{0\}$,  $u(z)\to 0$ as $|z|\to  1$. Then there exists a constant $c>0$ such that
$u(z)=-c\log |z|$, $z\in B\setminus \{0\}$.
\end{lem}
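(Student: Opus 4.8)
The plan is to reduce the statement to one--dimensional Fourier analysis on the circles $\{|z|=r\}$, $0<r<1$. Since $u$ is harmonic, hence real--analytic, on the open annulus $0<|z|<1$, for each such $r$ the function $\theta\mapsto u(re^{i\theta})$ is smooth and $2\pi$--periodic; write its Fourier expansion as $u(re^{i\theta})=\sum_{n\in\mathbb Z}a_n(r)e^{in\theta}$, where $a_n(r)=\frac1{2\pi}\int_0^{2\pi}u(re^{i\theta})e^{-in\theta}\,d\theta$. Inserting the series into $\Delta u=0$ in polar coordinates and differentiating term by term (a routine justification via interior estimates for harmonic functions) shows that each $a_n$ solves the Euler equation $a_n''+r^{-1}a_n'-n^{2}r^{-2}a_n=0$ on $(0,1)$, so $a_0(r)=A_0+B_0\log r$ and, for $n\neq0$, $a_n(r)=A_nr^{|n|}+B_nr^{-|n|}$ for constants $A_n,B_n$.

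Next I would use the boundary hypothesis. The condition $u(z)\to0$ as $|z|\to1$ is uniform on circles, so $\sup_{|z|=r}u(z)\to0$ as $r\to1$, whence $a_n(r)\to0$ as $r\to1$ for every $n$. The case $n=0$ forces $A_0=0$, so $a_0(r)=B_0\log r$; the case $n\neq0$ forces $A_n+B_n=0$, so $a_n(r)=A_n(r^{|n|}-r^{-|n|})$. Since $u>0$, the mean $a_0(r)=\frac1{2\pi}\int_0^{2\pi}u(re^{i\theta})\,d\theta$ is positive while $\log r<0$, so $B_0<0$; set $c=-B_0>0$, so $a_0(r)=-c\log r$.

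The heart of the argument — and the only step I expect to offer resistance — is to show that all non--radial modes vanish, i.e. $A_n=0$ for $n\neq0$. Here I would exploit positivity once more: because $u\ge0$, for every $n$
$$|a_n(r)|\le\frac1{2\pi}\int_0^{2\pi}u(re^{i\theta})\,d\theta=a_0(r)=-c\log r .$$
On the other hand, for $n\neq0$ and $0<r<1$ we have $|a_n(r)|=|A_n|\,(r^{-|n|}-r^{|n|})$, which, after the substitution $r=e^{-s}$ with $s>0$, becomes $2|A_n|\sinh(|n|s)$ while $-c\log r$ becomes $cs$. Thus $2|A_n|\sinh(|n|s)\le cs$ for all $s>0$; dividing by $s$ and letting $s\to\infty$ forces $A_n=0$. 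Consequently $u(re^{i\theta})=a_0(r)=-c\log r=-c\log|z|$, and since $u$ is not identically zero the constant $c$ is strictly positive, as claimed.

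Finally, I would note a shorter route available once B{\^o}cher's structure theorem (Lemma \ref{B} below, or \cite[Thm.~3.9]{ABR}) is in hand: a positive harmonic function on $B\setminus\{0\}$ has the form $u(z)=c\log(1/|z|)+h(z)$ with $c\ge0$ and $h$ harmonic on all of $B$; the hypothesis $u\to0$ as $|z|\to1$ gives $h\to0$ on the boundary, so $h\equiv0$ by the maximum principle, and positivity gives $c>0$. I would nevertheless prefer the self--contained Fourier computation above, to avoid any circular dependence on B{\^o}cher's theorem, which is invoked separately later in the paper.
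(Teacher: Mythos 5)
Your argument is correct. The paper itself offers no proof of this lemma -- it is quoted from Axler--Bourdon--Ramey, where it is obtained as a corollary of B{\^o}cher's theorem (the route you sketch in your final paragraph: write $u=\nu-c\log|z|$ with $\nu$ harmonic on $B$ and $c\ge 0$, note that $\nu\to 0$ at the boundary since $\log|z|$ does, kill $\nu$ by the maximum principle, and use positivity to force $c>0$). Your main argument is therefore a genuinely different and fully self-contained alternative: the separated-variables expansion $a_0(r)=A_0+B_0\log r$, $a_n(r)=A_nr^{|n|}+B_nr^{-|n|}$ is the correct general solution of the Euler ODEs satisfied by the Fourier coefficients, the boundary condition correctly pins down $A_0=0$ and $B_n=-A_n$, and the decisive step -- the bound $|a_n(r)|\le a_0(r)$ from positivity, which after the substitution $r=e^{-s}$ pits $\sinh(|n|s)$ against the linear growth $cs$ and forces $A_n=0$ -- is exactly right. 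Two minor points you gloss over are harmless: the uniform reading of ``$u(z)\to 0$ as $|z|\to 1$'' is the intended one (and is how the lemma is applied in the paper, via property (iii) of Lemma \ref{terceiro lema}), and the recovery of $u$ from its Fourier coefficients on each circle is immediate from smoothness. What your route buys is independence from B{\^o}cher's theorem; what it costs is that the same positivity bookkeeping essentially reproves the two-dimensional case of B{\^o}cher, which the paper needs anyway (as Lemma \ref{B}) for the continuity argument in section 6, so in the context of the paper the shorter derivation from Lemma \ref{B} would involve no circularity.
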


\begin{lem}  \label{B}   \rm (\text  {\cite{ABR}, p. 50})   \it Let  $u$  be positive and   harmonic  on $B\setminus \{0\}$. Then there is a harmonic function $\nu$  on $B$ and a constant $c\geq 0$, with 
$u(z)=\nu(z)-c\log |z|,  z\in B\setminus\{0\}.$
\end{lem}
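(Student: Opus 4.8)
The plan is to reduce Bôcher's theorem to the structure theory of harmonic functions on the punctured disc, and then use positivity of $u$ to annihilate all but one of the singular modes. Writing $z=re^{i\theta}$, expand $u$ on $0<r<1$ in its angular Fourier series $u(re^{i\theta})=\sum_{n\in\mathbb Z}\phi_n(r)e^{in\theta}$, where $\phi_n(r)=\frac{1}{2\pi}\int_0^{2\pi}u(re^{i\theta})e^{-in\theta}\,d\theta$. Substituting the ansatz $\phi_n(r)e^{in\theta}$ into Laplace's equation in polar coordinates shows that each $\phi_n$ solves the Euler equation $\phi_n''+r^{-1}\phi_n'-n^2r^{-2}\phi_n=0$, so $\phi_0(r)=\alpha_0+\beta_0\log r$ and, for $n\neq0$, $\phi_n(r)=\alpha_n r^{|n|}+\beta_n r^{-|n|}$; the standard theory of harmonic functions on an annulus gives that the series converges uniformly on compact subsets of $\{0<r<1\}$.

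Next, note that $\phi_0(r)=M(r)$ is the circular mean of $u$, hence $M(r)>0$ for every $r\in(0,1)$. Since $M(r)=\alpha_0+\beta_0\log r$ and $\log r\to-\infty$ as $r\to0^+$, the constant $\beta_0$ cannot be positive; set $c:=-\beta_0\geq0$, so that $M(r)=\alpha_0+c\log(1/r)$ grows no faster than $\log(1/r)$ near the origin. This is where positivity does its real work: for $n\neq0$, since $|e^{-in\theta}|=1$ and $u\geq0$ one has $|\phi_n(r)|\leq M(r)$, and therefore, for $0<r<1$,
\[
|\beta_n|\,r^{-|n|}\leq|\phi_n(r)|+|\alpha_n|\,r^{|n|}\leq M(r)+|\alpha_n|=\alpha_0+c\log(1/r)+|\alpha_n|.
\]
The left side blows up like $r^{-|n|}$ as $r\to0^+$ while the right side only grows like $\log(1/r)$, which is impossible unless $\beta_n=0$. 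Hence every singular mode except the $\log r$ term vanishes, and $u(re^{i\theta})=\alpha_0-c\log r+\sum_{n\neq0}\alpha_n r^{|n|}e^{in\theta}$ on $B\setminus\{0\}$.

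Finally, set $\nu(z):=u(z)+c\log|z|$, which on $B\setminus\{0\}$ equals $\alpha_0+\sum_{n\neq0}\alpha_n r^{|n|}e^{in\theta}$, a series involving only non-negative powers of $r$. Since $\nu$ is real, the reality relation $\overline{\alpha_{-n}}=\alpha_n$ holds, so $\nu=\alpha_0+2\,\mathrm{Re}\,g(z)$ with $g(z)=\sum_{n\geq1}\alpha_n z^n$; the uniform convergence on each circle $|z|=r<1$ keeps $|\alpha_n|r^{|n|}$ bounded, so $g$ has radius of convergence at least $1$. Thus $\nu$ is harmonic on all of $B$, and $u=\nu-c\log|z|$ on $B\setminus\{0\}$ with $c\geq0$, as claimed.

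The conceptual content is the one-line estimate in the second paragraph; the genuinely technical (but routine) part is the Fourier–Laurent bookkeeping — justifying the expansion and its convergence for a merely positive harmonic function, and verifying that the regular part $\nu$ extends harmonically across $0$ (i.e. that $u+c\log|z|$ has a removable singularity there). An alternative that sidesteps the series and leans on an earlier result: solve a Dirichlet problem on a slightly smaller disc $B_\rho$ with data $u|_{|z|=\rho}$ to get a bounded positive harmonic $h$, use the maximum principle on an annulus $\delta<|z|<\rho$ to trap $u$ below $h+\lambda\log(\rho/|z|)$ for suitable $\lambda\geq0$, and apply Lemma \ref{segundo lema} to the positive harmonic function $h+\lambda\log(\rho/|z|)-u$ (which vanishes on $|z|=\rho$); this gives the representation on $B_\rho\setminus\{0\}$, and since $u+c\log|z|$ then coincides near $0$ with a function harmonic on $B_\rho$, it extends over the puncture, yielding the statement on all of $B\setminus\{0\}$.
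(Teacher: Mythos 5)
Your main argument is correct. Note, though, that the paper does not prove this lemma at all: it is quoted verbatim from Axler--Bourdon--Ramey (where it is B{\^o}cher's theorem specialized to the plane), so there is no ``paper proof'' to match. On its own merits, your Fourier--Laurent proof is the classical two-dimensional argument and it is sound: the expansion $u=\sum_n\phi_n(r)e^{in\theta}$ with $\phi_0=\alpha_0+\beta_0\log r$ and $\phi_n=\alpha_n r^{|n|}+\beta_n r^{-|n|}$ is the standard structure theorem for harmonic functions on an annulus; positivity kills $\beta_0>0$ via the circular mean and kills every $\beta_n$ ($n\neq 0$) via the bound $|\phi_n(r)|\leq M(r)=O(\log(1/r))$, which is exactly the right use of the hypothesis; and the coefficient bound $|\alpha_n|\leq M(r)r^{-|n|}$ for each fixed $r<1$ gives $\limsup|\alpha_n|^{1/|n|}\leq 1$, so the regular part is harmonic on all of $B$. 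This is more elementary than the argument in the cited reference (which is dimension-free) but entirely adequate here, where only $n=2$ is needed. The one step you correctly flag as ``routine bookkeeping'' --- local uniform convergence of the Laurent-type expansion for a merely positive harmonic function --- is indeed standard (it needs no boundedness hypothesis), so there is no gap. Your alternative sketch via Lemma \ref{segundo lema} also works, but it silently requires an a priori bound $\max_{|z|=\delta}u=O(\log(1/\delta))$ in order to choose $\lambda$ independently of $\delta$ in the barrier $h+\lambda\log(\rho/|z|)$; that bound follows from Harnack's inequality on the circles $|z|=\delta$ (whose Harnack constant is scale-invariant) together with the elementary fact that the circular mean is of the form $a+b\log r$, so the sketch can be completed, but as written it is the one place where a reader would have to supply an argument.
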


Let   $F$ be as in the statement of Theorem \ref{parabola}. Replacing $F$ by $F-q$ we may, and will,  assume that  $q=0$.
Suppose, by contradiction, that $F^{-1}(0)$ contains at least two  distinct points,  say $a$ and $b$.
Consider an  open neighborhood $U$ of $a$  and  an  Euclidean ball $W$ centered at $0$   such  that   $b\notin \overline U$ and $U\xrightarrow[]{F}W$  is a diffeomorphism.

Let $\pi\subset \mathbb R^n$ be a plane containing $q=0$ that is parallel to some tangent plane of $M^2$. We shall denote by $\mathcal A$ the totality of such $\pi$, so that $\mathcal A$ may  be regarded as a subset  of the Grassmannian   $G_2(\mathbb R^n)$ of two-dimensional  vector subspaces of $\mathbb R^n$.  
Observe that    $a,b\in F^{-1}(\pi)$ for all  $\pi\in \mathcal A$.  
We will abuse the notation somewhat and write interchangeably $\pi\in G_2(\mathbb R^n)$ or  $\pi\subset \mathbb R^n$.   
Set 
\begin{eqnarray} \label{U and T} U_{\pi}=U\cap F^{-1}(\pi),   \;\;T_{\pi}=\partial (U\cap F^{-1}(\pi)). \end{eqnarray}

Denote by $\Delta_{\pi}$ the Riemannian  Laplacian on the simply-connected surface $F^{-1}(\pi)$,  associated to the metric $g_{\pi}$ on $F^{-1}(\pi)$ obtained by the restriction of the  Euclidean metric $g$ of $\mathbb R^n$.

In  local conformal coordinates $z=x+iy$ on $F^{-1}(\pi)$,    $g_{\pi}$ and $\Delta_{\pi}$ are given by
\begin{eqnarray*} g_{\pi}=\lambda^2 |dz|^2, \;\;   \Delta_{\pi}=4\lambda^{-2}\partial_z\partial_{\overline z},\end{eqnarray*}
where $\lambda$ is a positive smooth function,  and
\begin{eqnarray*} \partial_z=\frac{1}{2}(\partial _x -i\partial_y),  \;\;\; \partial _{\overline z}= \frac{1}{2}(\partial _x +i\partial_y). \end{eqnarray*}

Hence, a function on $F^{-1}(\pi)$ is $\Delta_{\pi}$-harmonic (i.e. harmonic in the sense of the Riemannian metric $g_{\pi}$)  if and only if  it is harmonic in the  sense of the Riemann surface structure of $F^{-1}(\pi)$ induced by the isothermal parameters $z$ above.

\begin{lem} \label {terceiro  lema} For each $\pi$ as before there exists a unique function $u_{\pi}: F^{-1}(\pi)\setminus \overline{U}_{\pi}\to [0, \infty)$
satisfying the following properties:
\vskip3pt
\noindent i) $u_{\pi}$ is  $\Delta_{\pi}$-harmonic.
\vskip3pt
\noindent ii)   $\lim u_{\pi}(p)=\infty$ uniformly as $|p|\to \infty$ in $\mathbb R^n$,  $p\in F^{-1}(\pi) \setminus \overline {U_{\pi}}$.
\vskip3pt
\noindent iii)  $\lim u_{\pi}(p)=0$  as  $p \to T_{\pi}$,  $p\in F^{-1}(\pi) \setminus \overline {U_{\pi}}$.
\vskip3pt
\noindent iv) $u_{\pi}(b)=1$.
\vskip3pt
\noindent Furthermore, $du_{\pi}(b)\neq 0$.
\end{lem}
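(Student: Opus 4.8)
The plan is to construct $u_\pi$ as the solution of an exterior Dirichlet-type problem on the Riemann surface $F^{-1}(\pi)$, which is biholomorphic to $\mathbb{C}$. First I would transfer the problem to the plane: fix a biholomorphism $\Phi_\pi : F^{-1}(\pi) \to \mathbb{C}$. Since $F$ restricted to $U_\pi$ is (the pullback of) a diffeomorphism onto a neighborhood of $0$ in $\pi \cong \mathbb{R}^2$, the open set $U_\pi$ is a topological disc with smooth boundary curve $T_\pi$, so $\Omega_\pi := \Phi_\pi(F^{-1}(\pi)\setminus \overline{U_\pi})$ is the complement in $\mathbb{C}$ of a compact set bounded by a Jordan curve. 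Harmonicity in the sense of $g_\pi$ is the same as harmonicity in the complex structure (as noted just before the lemma), hence the same as harmonicity of the transported function on $\Omega_\pi \subset \mathbb{C}$, so I may work entirely in the plane.

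The existence proof then proceeds as follows. I would exhaust $\Omega_\pi$ by the relatively compact annular domains $\Omega_\pi^R := \Omega_\pi \cap \{|w| < R\}$ (for $R$ large, using that $T_\pi$ maps to a bounded Jordan curve), and solve the Dirichlet problem on $\Omega_\pi^R$ with boundary values $0$ on $\Phi_\pi(T_\pi)$ and a constant $M_R$ on $\{|w| = R\}$; here $M_R$ is chosen so that the solution $v_R$ takes the value $1$ at the point $\Phi_\pi(b)$ — this is possible and the normalization is unique because, by the maximum principle, the value at $\Phi_\pi(b)$ depends strictly monotonically (in fact linearly) on the outer boundary constant, via the harmonic measure of the outer boundary. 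Letting $R \to \infty$, Harnack's principle gives a harmonic limit $v$ on $\Omega_\pi$ (after checking the sequence $v_R$ is monotone or locally bounded — monotonicity in $R$ follows from the comparison principle once one knows $M_R$ is increasing, which itself follows from the harmonic-measure description). The boundary value $0$ along $\Phi_\pi(T_\pi)$ is preserved because one can use an explicit subharmonic barrier at each boundary point of the Jordan curve, and $v(\Phi_\pi(b)) = 1$ passes to the limit. The blow-up $v \to \infty$ at $\infty$ (property ii)) is the delicate point: it must be shown that the limit is not identically a bounded harmonic function, equivalently that $\Omega_\pi$ — the complement of a Jordan domain in $\mathbb{C}$ — is not parabolic with respect to the ideal boundary consisting only of $\Phi_\pi(T_\pi)$; concretely, one shows $\lim_{|w|\to\infty} v(w) = \infty$ by comparing $v$ from below with multiples of $\log|w/r_0|$ (where $\{|w| \le r_0\} \supset \Phi_\pi(\overline{U_\pi})$), using that the exterior of a disc in $\mathbb{C}$ is not parabolic. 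For uniqueness, if $u_\pi^{(1)}, u_\pi^{(2)}$ both satisfy i)–iv), their difference $h$ is harmonic, bounded near $T_\pi$ with boundary value $0$, and is $o(u_\pi^{(j)})$ hence $o(\log|w|)$ at infinity; a two-sided Phragmén–Lindelöf / maximum-principle argument on the annular region (the relevant harmonic majorant $\varepsilon \log|w|$ plus the Poisson-type bound near $T_\pi$) forces $h \equiv 0$.

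Finally, for the non-degeneracy statement $du_\pi(b) \neq 0$: since $b$ lies in the \emph{interior} of the domain $F^{-1}(\pi)\setminus\overline{U_\pi}$ (it is not on $T_\pi$ because $b \notin \overline{U}$ by the choice of $U$), $u_\pi$ is a nonconstant positive harmonic function near $b$ taking the value $1$ there. If $du_\pi(b) = 0$, then in an isothermal coordinate $z$ centered at $b$ the holomorphic function $2\partial_z u_\pi = \partial_x u_\pi - i \partial_y u_\pi$ would vanish at $b$, so by the argument principle the level set $\{u_\pi = 1\}$ near $b$ consists of at least four smooth arcs through $b$ dividing a neighborhood into at least four sectors on which $u_\pi - 1$ alternates sign — but $u_\pi > 0$ is not obstructed by this alone, so more precisely one invokes that $u_\pi$ is the specific solution constructed above: it is, by uniqueness and the monotone exhaustion, a strictly increasing function of the "radial-type" exhaustion parameter, hence has no interior critical points at all (a critical point of a harmonic function that is a regular value-1 level would contradict the structure of its level sets combined with the boundary conditions $0$ on $T_\pi$ and $+\infty$ at $\infty$). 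I expect the main obstacle to be precisely item ii) — proving the uniform blow-up at infinity, i.e. identifying the correct notion of "large" Riemann surface and ruling out that the limiting harmonic function is bounded — together with making the exhaustion/normalization argument rigorous enough that uniqueness and the no-critical-point property both fall out cleanly.
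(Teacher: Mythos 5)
Your existence construction (exhaustion by annular Dirichlet problems, normalization at $b$, Harnack limits, barriers along the smooth curve $T_\pi$, and the lower comparison with $c\log|w/r_0|$ to force blow-up at infinity) is a legitimate alternative to the paper's route and can be made rigorous. But two of the assertions of the lemma are not actually proved in your proposal, and both gaps trace to the same missing idea. For uniqueness you write that the difference $h=u_\pi^{(1)}-u_\pi^{(2)}$ of two competitors ``is $o(u_\pi^{(j)})$, hence $o(\log|w|)$, at infinity''; neither claim is justified. Property ii) only says each $u_\pi^{(j)}\to\infty$, with no rate, so the difference of two such functions need not be small compared to either of them, and you have no a priori growth bound on an \emph{arbitrary} competitor $u_\pi^{(2)}$ --- only on the one you constructed. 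So the Phragm\'en--Lindel\"of comparison cannot even get started. Likewise, your argument for $du_\pi(b)\neq 0$ trails off: the four-sector level-set picture at a putative critical point does not by itself contradict positivity (as you concede), and ``strictly increasing in the exhaustion parameter, hence no interior critical points'' is an assertion, not a proof.

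The single observation that closes both gaps --- and that the paper uses to reduce the whole lemma to a one-line computation --- is that $F^{-1}(\pi)\setminus\overline{U_\pi}$, being a neighborhood of the unique end of a surface conformal to $\mathbb C$ with a closed disc removed, is conformal to the \emph{punctured} unit disc $B\setminus\{0\}$, not merely to the exterior of a Jordan curve in $\mathbb C$. The problem is therefore an isolated-singularity problem for positive harmonic functions, and the classical consequence of B\^ocher's theorem quoted as Lemma \ref{segundo lema} (a positive harmonic function on $B\setminus\{0\}$ tending to $0$ as $|z|\to 1$ must equal $-c\log|z|$) classifies \emph{every} admissible competitor at once: existence is the explicit formula $u_\pi=v_\pi\circ h_\pi$ with $v_\pi(z)=\log|z|/\log(h_\pi(b))$, uniqueness is the fact that the normalization iv) pins down the constant $c$, and $du_\pi(b)\neq 0$ is immediate because $\log|z|$ has no critical points and $h_\pi$ is conformal. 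If you wish to keep your exhaustion construction for existence, you still need this classification (or an equivalent Harnack-at-infinity argument) to handle an arbitrary competitor in the uniqueness step, and uniqueness is not a side issue here: it is the linchpin of the compactness-plus-uniqueness continuity argument in the following section.
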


The proof of Lemma \ref{terceiro lema}   proceeds as follows. Recall that we are working under the assumption that, for all $\pi \in \mathcal A$,  the Riemann surface $F^{-1}(\pi)$ is   conformal to the complex plane $\mathbb C$.   
Hence  $F^{-1}(\pi)\setminus  \overline U$ is conformal to a punctured neighborhood of infinity in the Riemann sphere which, in turn, is conformal to $B\setminus \{0\}$.

As the biholomorphic self-maps of  $B\setminus \{0\}$ are rotations about $0$,  it follows that  there is a \it unique \rm conformal map $h_{\pi}: F^{-1}(\pi)\setminus  \overline {U_{\pi}} \to B\setminus \{0\}$ such that  $h_{\pi}(b)\in (0,\infty)\cap B$.

Consider the function $v_{\pi}:B\setminus \{0\} \to (0,\infty)$,

\begin{eqnarray} \label{log}  v_{\pi}(z)=\frac{\log|z|}{\log (h_{\pi}(b))}, \;\; z\in B\setminus \{0\}. \end{eqnarray}

Observe that 
\begin{eqnarray} \label {double} |\lim_{p \to T}h_{\pi}(p)|=1, \;\;\;  \lim_{||p ||\to \infty}h_{\pi}(p)=\infty \end{eqnarray}

It is now clear that $u_{\pi} \stackrel {def}=v_{\pi}\circ h_{\pi}$ satisfies  properties i)-iv) above, settling the existence part of Lemma \ref{terceiro lema}. Furthermore, $du_{\pi}(b)\neq 0$ because $v_{\pi}$ has no critical points. 

In order to  establish uniqueness, assume that  $\tilde u_{\pi}$  is non-negative and satisfies i)-iv).
It follows from  (\ref{double}) and   iii)  that $\tilde v_{\pi}:= \tilde u_{\pi}\circ (h_{\pi})^{-1}:B\setminus\{0\} \to (0, \infty)$ satisfies the hypothesis of Lemma \ref{segundo lema}, and so
$ \tilde v_{\pi}(z)=-c\log |z|$
for some $c>0$ and all  $z\in B\setminus \{0\}$.
From (iv),
$$1= \tilde u_{\pi}(b)=\tilde v_{\pi}(h_{\pi}(b)) = -c\log |h_{\pi}(b)|.   $$
Solving for $-c$ and using (\ref{log}) one has  $v_{\pi}=\tilde v_{\pi}$, and so  $ u_{\pi}=\tilde u_{\pi}$. This concludes the proof of Lemma \ref{terceiro lema}.  

\qed
\vskip10pt
Continuing with the proof of Theorem \ref{parabola},  for $p\in M^2$ let $\pi_p =T_pM^2$ be the tangent \it space  \rm  of $M$ at $p$ (hence a two-dimensional vector space, naturally identified with an element of $G_2(\mathbb R^n)$).

Recall that the  Riemannian   gradient $\nabla  u_{\pi_p}(b)$, where $u_{\pi_p}$ is given by Lemma \ref{terceiro lema},  is the vector in the  tangent space
$T_b(F^{-1}(\pi_p))$ that represents, in the inner product sense,  the differential
$du_{\pi_p}(b):T_b(F^{-1}(\pi_p))\to \mathbb R$. 

\vskip10pt
\begin{lem}\label{quarto lema} The map $s:M^2\to \mathbb R^n$ given by
\begin{eqnarray} \label {def} s(p)=dF(b)\nabla u_{\pi_p}(b), \, \;p\in M^2, \end{eqnarray} 
defines   a non-vanishing  vector field on $M^2$.
\end{lem}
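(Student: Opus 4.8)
The plan is to verify the two things that "vector field on $M^2$" requires: that $s(p)$ actually lies in the tangent space $T_pM^2$ for each $p$, and that $s(p)\neq 0$ for every $p$; continuity of $s$ is deferred to the next section, as the authors indicate. First I would unwind the definition. For $p\in M^2$ put $\pi_p=T_pM^2\in G_2(\mathbb R^n)$; by construction $\pi_p\in\mathcal A$, so $a,b\in F^{-1}(\pi_p)$ and Lemma~\ref{terceiro lema} furnishes $u_{\pi_p}$ with $du_{\pi_p}(b)\neq 0$, hence $\nabla u_{\pi_p}(b)\in T_b(F^{-1}(\pi_p))$ is a nonzero vector. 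Since $F$ is a local diffeomorphism, $dF(b)$ is a linear isomorphism $T_b\mathbb R^n\to T_q\mathbb R^n=\mathbb R^n$, so $s(p)=dF(b)\nabla u_{\pi_p}(b)\neq 0$ immediately. That disposes of the non-vanishing claim.

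The substantive point is that $s(p)$ lands in $\pi_p=T_pM^2$, so that $s$ is genuinely a section of $TM^2$ rather than just an $\mathbb R^n$-valued map. Here the key observation is that $F$ restricts to a local diffeomorphism $F^{-1}(\pi_p)\to \pi_p$ (the plane through $0$ parallel to $T_pM^2$); differentiating this restriction at $b$ gives a linear isomorphism $dF(b):T_b(F^{-1}(\pi_p))\to \pi_p$. Since $\nabla u_{\pi_p}(b)$ is a tangent vector to the surface $F^{-1}(\pi_p)$ at $b$, its image $dF(b)\nabla u_{\pi_p}(b)$ lies in $\pi_p$. But $\pi_p=T_pM^2$ as subspaces of $\mathbb R^n$ — that is exactly the identification used in defining $\pi_p$ — so $s(p)\in T_pM^2$. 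Thus $s$ assigns to each $p\in M^2$ a nonzero element of $T_pM^2$, i.e. it is a non-vanishing vector field on $M^2$, assuming continuity.

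I do not expect any real obstacle in this lemma itself: once one notes that $F|_{F^{-1}(\pi_p)}$ is a local diffeomorphism onto $\pi_p$ and that $\nabla u_{\pi_p}(b)$ is nonzero and tangent to $F^{-1}(\pi_p)$, both assertions are formal consequences of $dF(b)$ being an isomorphism. The one thing worth being careful about is the identification "$T_b(F^{-1}(\pi_p))\subset T_b\mathbb R^n=\mathbb R^n$" versus "$T_q\pi_p\subset\mathbb R^n$": one must check that $dF(b)$ maps the former isomorphically onto the latter, which holds because $F\bigl(F^{-1}(\pi_p)\bigr)\subset\pi_p$ and $F$ is a submersion, so the differential carries the tangent space of the source submanifold into the tangent space of the target plane, and by dimension count (both are $2$-dimensional) it is onto. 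The genuinely hard work — showing $p\mapsto s(p)$ is continuous, which involves the conformal maps $h_{\pi}$ varying continuously with $\pi$, B\^ocher's theorem, and elliptic estimates — is precisely what the paper postpones to Section~6, and is not needed for the statement of Lemma~\ref{quarto lema} as phrased.
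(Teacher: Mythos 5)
Your proposal is correct and follows essentially the same route as the paper: non-vanishing comes from $du_{\pi_p}(b)\neq 0$ (Lemma \ref{terceiro lema}) together with the nonsingularity of $dF(b)$, and the fact that $s(p)\in T_pM^2$ comes from $dF(b)$ carrying $T_b(F^{-1}(\pi_p))$ onto $T_0\pi_p\approx T_pM^2$, with continuity deferred to the next section exactly as the paper does.
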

Indeed, for all $p\in M^2$ one has $\nabla  u_{\pi_p}(b) \in T_b(F^{-1}(T_pM^2))$, and since $b\in F^{-1}(T_pM^2)$, on account of $F(b)=q=0$,   one has
\begin{eqnarray*} \label{s} s(p)= dF(b)\nabla u_{(T_p M^2)}(b)\in dF(b)[T_b(F^{-1}(T_pM^2))]=T_{F(b)}T_pM^2=T_0T_pM^2 \approx T_pM^2,\end{eqnarray*}
so that $s$ is a section of the tangent bundle $TM^2$, i.e a vector field on $M^2$.  
Since $\nabla u_{(T_pM^2)}(b)\neq 0$ by Lemma \ref{terceiro lema}, and $dF(b)$ is non-singular ($F$ is a local diffeomorphism), it follows from (\ref{s})  that $s(p)\neq 0$ for all 
$p\in M^2$.

\qed
\vskip10pt

\noindent \bf Proof of Theorem \ref {parabola}. \rm
\vskip10pt

It will be shown in the next section, using a lengthy argument,  that the nowhere zero vector field on $M^2$ defined     in Lemma \ref{quarto lema}, and whose construction depends in an essential way on the assumption
$\#F^{-1}(q)=\#F^{-1}(0)\geq 2$,  is  continuous. But this contradicts the Poincar\'e-Hopf theorem \cite{OR} because $\chi(M^2)\neq 0$, and so $F^{-1}(0)$  must be   a singleton,  thus completing the proof of
Theorem \ref {parabola}.

\section{Elliptic estimates and the continuity argument}

Our goal in this section is to  finish the proof of Theorem \ref{parabola} by showing  that the   vector field $s$ given in Lemma \ref{quarto lema} is continuous.
The proof uses the general theory of linear elliptic partial differential equations, as well as the B\^ocher theorem (Lemma \ref{B}). 

Recall from the previous section that we are working under the assumption that $F(a)=F(b)=0$,  $a\neq b$. To keep the notation at a  manageable level, we continue setting
$ \pi_p=T_pM^2$. The vector field $s$ from Lemma \ref{quarto lema} is given by  
\begin{eqnarray}\label{new def}  s(p)=dF(b)\nabla u_{\pi_p}(b)\in T_{F(b)}T_pM^2=T_0T_pM^2\cong T_pM^2, \; p\in M^2. \end{eqnarray}
In order to  establish the continuity of $s$ at  $z\in M^2$, it suffices to show that for every sequence $(z_k)$ in $M^2$  that converges to $z$  there is a subsequence $z_{k_j}$ such that
\begin{eqnarray} \label {subsequence} s(z_{k_j})=dF(b)\nabla u_{\pi_{z_j}}(b)\to dF(b)\nabla u_{\pi_z}(b) = s(z).\end{eqnarray}

Loosely speaking, the desirable strategy  would be to first      pull-back  to $F^{-1}(\pi_z)$ the functions 
$u_{\pi_{z_k}}$ that are  given by Lemma \ref{terceiro lema}. Next, one would use
 compactness arguments  for families of positive harmonic  functions to  take $C^k$ limits over compact subsets of $F^{-1}(\pi_z)$  of the pull-backs of $u_{\pi_{z_k}}$ along suitable subsequences,   in order to obtain  a harmonic function $\hat u_{\pi_z}$ that  would  then be  shown to satisfy all hypotheses of Lemma \ref{terceiro lema}. 
 By the uniqueness part of the said lemma one would have    $\hat u_{\pi_z}=u_{\pi_z}$. In particular,  $\nabla \hat u_{\pi_z}(b)=\nabla u_{\pi_z}(b)$,   and (\ref{subsequence}) would follow.  
 The general  plan can be summarized schematically as
  $$ \text{ Compactness} + \text {Uniqueness} \Longrightarrow \text {Continuity}. $$
  
The main  difficulty  in carrying out  such an  approach is that there is no natural \it globally defined \rm   map $F^{-1}(\pi_z) \to F^{-1}(\pi_{z_k})$ that would allow us to take pull-backs. 

However,  as $z_k\to z$,  for any   fixed compact set
 $C\subset F^{-1}(\pi_z)$ and  sufficiently large index $k$,  there are natural injections $C\to F^{-1}(\pi_{z_k})$,   because  the surfaces $F^{-1}(\pi_{z_k})$ converge $C^r$-uniformly over compact sets of $\mathbb R^n$ to $F^{-1}(\pi_z)$. 
 
 This will allow us to pull back functions, over fixed compact sets $C=C_m$ from an exhaustion of  $F^{-1}(\pi_z)$ and, as we shall see,  with additional  arguments, working with a fixed compact set in an exhaustion at a time,  extracting convergent subsequences of subsequences, and so on,  the above outline can be implemented.

\vskip10pt
\noindent \it Notation. \rm From this point on, in order to simplify the writing,  we set 
$\pi_k= \pi_{z_k}, \;\; \pi=\pi_z$.
\vskip10pt

The continuity proof will make use of   results for linear elliptic partial differential equations that are familiar to the experts  (the  maximum principle, Harnack's  inequality,  interior and boundary  estimates, etc.). But in terms  of the presentation  the arguments are not entirely straightforward,  due the fact that the relevant harmonic functions are defined on different spaces, precluding  us from simply quoting these standard  theorems. 
 
We now begin the formal proof of (\ref{subsequence}).  Certain  portions of it, having to do with setting up the procedure to pull back functions,  are patterned after some arguments in \cite{NX1}. 

As remarked above, the properly embedded surfaces $F^{-1}(\pi_{k})$ converge    to $F^{-1}(\pi)$,  in the $C^r$ sense, for any $r\geq 1$ fixed, uniformly over   compact subsets of $\mathbb R^n$.  

Let $K=K_1$ be the first  set  in a countable  increasing exhaustion $\{K_m\}_{m=1}^{\infty}$ of
$\mathbb R^n$ ($=$ the domain of $F$) by closed balls, where the radius of $K_1$ is large enough so that $a, b$ lie in the same component $C_1$ of  $K_1\cap F^{-1}(\pi)$. Clearly, 
$a, b$ lie in the same component $C_m$ of $K_m\cap F^{-1}(\pi)$, $m\geq 1$.

One can cover  $K_1$  with finitely many Euclidean  balls, with a  sufficiently small radius,  that are mapped by $F$ diffeomorphically  onto their images. Take a   Lebesgue number  $5\delta$ for this cover. This  means  that any subset of diameter less than $5\delta$ is contained in an open set of the said covering. If $z_1, z_2\in K_1$ and  $B_{\delta}(z_1)\cap B_{\delta}(z_2) \neq \emptyset$,   then  the distance between $z_1$ and $z_2$ is at most $2\delta$ and so $B_{\delta}(z_1)\cup B_{\delta}(z_2)$ has diameter at most $4\delta$. It  follows that:
\vskip10pt
\noindent i)  If $B_{\delta}(z_1)\cap B_{\delta}(z_2) \neq \emptyset$ then the restriction $F|(\overline{B_{\delta}(z_1)}\cup \overline {B_{\delta}(z_2)})$ is injective.
\vskip10pt
Choose  $\tilde z_1, \dots, \tilde z_m \in C_1$ so  that the open balls $B_{\delta}(\tilde z_j)$, $1\leq j \leq m$,  cover $C_1$.
Let $S_n\in O(n)$  be a sequence of orthogonal transformations that converge to the identity map 
$I=I_{\mathbb R^n}$  and
satisfy $S_k(\pi)=\pi_k$.

As  $S_k\to I$ and $C_1$ is compact,   there exists an integer $N$ such that  for $k\geq N$,
\begin{eqnarray*} S_k(F(z))\in F(\overline{B_{\delta}(\tilde z_j)}), \end{eqnarray*}
for all $j\in \{1, \dots, m\}$  whenever  $z\in C_1\cap B_{\delta}(\tilde z_j)$.

Consider, for $k\geq N$,  the map
$\phi_{k}: C_1\to F^{-1}(\pi_k)\subset \mathbb R^n$, given by
\begin{eqnarray}\label{tilde} \phi_{k}(z)=[(F|_{\overline{B_{\delta}(\tilde z_j)}})^{-1}\circ S_k\circ F](z), \end{eqnarray}
where $\tilde z_j$ is such that $z\in C_1\cap B_{\delta}(\tilde z_j).$ 

One must of course show that the definition of $\phi_k$ is independent of the point $\tilde z_j$.
Suppose therefore that
\begin{eqnarray}\label{tilde tilde} [(F|_{\overline{B_{\delta}(\tilde z_j)}})^{-1}\circ S_k\circ F](z)=[(F|_{\overline{B_{\delta}(\tilde z_i)}})^{-1}\circ S_k\circ F](z), \end{eqnarray}
where $\tilde z_i$ is such that $z\in C_1\cap B_{\delta}(\tilde z_i)$,  and similarly for $\tilde z_j$.
Write $a_j$ and $a_i$ for the  left and right hand sides of  (\ref{tilde tilde}), so that $a_j\in \overline{B_{\delta}(\tilde z_j)}$, $a_i\in \overline{B_{\delta}(\tilde z_i)}$ and
$F(a_j)=S_k(F(z))=F(a_i).$

Since $a_i, a_j\in \overline{B_{\delta}(\tilde z_i)} \cup \overline {B_{\delta}(\tilde z_j)}$,  $z\in  B_{\delta}(\tilde z_i) \cap B_{\delta}(\tilde z_j)$ and $F(a_i)=F(a_j)$, it follows from i) that $a_i=a_j$, and so
$\phi_k$ given by (\ref{tilde}) is a well-defined  local diffeomorphism.

Replacing $S_k$  by $I_{\mathbb R^n}+(S_k-I_{\mathbb R^n})$ in (\ref{tilde}), we write
\begin{eqnarray} \label {tending to the inclusion}\phi_{k}(z)= z+[(F|_{B_{\delta}(\tilde z_j)})^{-1}\circ (S_k-I_{\mathbb R^n})\circ F](z) \in F^{-1}(\pi_k), 
\;\;z\in C_1.\end{eqnarray}

 The term in brackets in
 (\ref{tending to the inclusion}) goes to zero, together with any finite number of derivatives, uniformly over $C_1$, as $k\to \infty$.  In other words, by  taking a possibly larger $N$ one can view $\phi_k$ as an arbitrarily small perturbation of the identity.

 Since $C_1$ is compact, it follows from (\ref{tending to the inclusion}) that for all sufficiently large $k$  and $z, w \in C_1$, the map $\phi_k$  will satisfy the estimate
 $|\phi_k(z)-\phi_k(w)|\geq \frac{1}{2} |z-w|$. In particular, we have

 \begin{lem}\label{phi} For all sufficiently large $k$, the map $\phi_{k}: F^{-1}(\pi)\supset C_1\to F^{-1}(\pi_k)$ defined above is an injective local diffeomorphism
 satisfying $\phi_k(b)=b$. \end{lem}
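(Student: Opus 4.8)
The plan is to verify the three assertions in Lemma~\ref{phi} in turn, using the approximation $\phi_k(z) = z + [(F|_{B_\delta(\tilde z_j)})^{-1}\circ (S_k - I)\circ F](z)$ established in (\ref{tending to the inclusion}) together with the well-definedness already checked above. First I would record that $\phi_k$ lands in $F^{-1}(\pi_k)$: since $F(\phi_k(z)) = S_k(F(z)) \in S_k(\pi) = \pi_k$ whenever $z \in C_1 \subset F^{-1}(\pi)$, the image is automatically contained in $F^{-1}(\pi_k)$, and $\phi_k$ is a composition of local diffeomorphisms (a local inverse branch of $F$, an orthogonal map, and $F$ itself), hence a local diffeomorphism onto its image.

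Next I would establish injectivity. The key estimate is the bi-Lipschitz-type bound $|\phi_k(z) - \phi_k(w)| \ge \tfrac12 |z - w|$ for $z, w \in C_1$ and $k$ large, which follows from (\ref{tending to the inclusion}): writing $\phi_k(z) - \phi_k(w) = (z - w) + R_k(z) - R_k(w)$ where $R_k$ is the bracketed remainder term, and using that $R_k \to 0$ in $C^1$ uniformly on the compact set $C_1$ (so that $R_k$ is eventually $\tfrac14$-Lipschitz on $C_1$ by the mean value inequality applied along segments — here one uses that $C_1$, being a component of the intersection of a closed ball with a properly embedded surface, can be connected by short paths, or one shrinks attention to the relevant local pieces where the ball covering applies), we get $|\phi_k(z) - \phi_k(w)| \ge |z-w| - |R_k(z) - R_k(w)| \ge \tfrac12 |z-w|$. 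This immediately gives injectivity of $\phi_k$ on $C_1$.

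Finally, the normalization $\phi_k(b) = b$: since $F(b) = q = 0$, we have $S_k(F(b)) = S_k(0) = 0 = F(b)$ because $S_k$ is linear (orthogonal), so in (\ref{tilde}) one is applying the local inverse branch $(F|_{\overline{B_\delta(\tilde z_j)}})^{-1}$ to $F(b)$ itself, with $b$ in the relevant ball $\overline{B_\delta(\tilde z_j)}$; by uniqueness of the local inverse this returns $b$. Equivalently, from (\ref{tending to the inclusion}) the remainder term at $z = b$ is $(F|_{B_\delta(\tilde z_j)})^{-1}((S_k - I)(0)) = (F|_{B_\delta(\tilde z_j)})^{-1}(0) = b - b$, i.e. zero, so $\phi_k(b) = b$.

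The main obstacle I anticipate is purely bookkeeping rather than conceptual: making the Lipschitz estimate on the remainder $R_k$ genuinely global over $C_1$, since $R_k$ is defined piecewise via different local inverse branches of $F$ on overlapping balls $B_\delta(\tilde z_j)$. One must argue that on each ball the local inverse has derivative bounded uniformly (by compactness of $K_1$ and non-degeneracy of $DF$), so that $(S_k - I)$ small in operator norm forces $DR_k$ small uniformly, and then patch these local Lipschitz bounds using that the pieces agree on overlaps (already shown) — at which point connecting nearby points of $C_1$ by a controlled-length path inside a single ball of the cover yields the desired global estimate. Everything else is a direct unwinding of definitions already set up in the text.
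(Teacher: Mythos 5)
Your proposal is correct and follows essentially the same route as the paper: view $\phi_k$ as a small $C^1$ perturbation of the identity via (\ref{tending to the inclusion}), use compactness of $C_1$ to get the bi-Lipschitz bound $|\phi_k(z)-\phi_k(w)|\ge \tfrac12|z-w|$ (hence injectivity), and read off $\phi_k(b)=b$ from $S_k(0)=0$ and the choice of inverse branch through $b$. You actually supply more detail than the paper does on patching the piecewise-defined remainder across the balls $B_\delta(\tilde z_j)$ and on comparing intrinsic and extrinsic distances on $C_1$, which is exactly the point the paper leaves implicit; only the throwaway line ``$(F|_{B_\delta(\tilde z_j)})^{-1}(0)=b-b$'' is a notational slip (it equals $b$, and the bracketed term in (\ref{tending to the inclusion}) should be read as $\phi_k(z)-z$ rather than a literal composition), but your primary verification of $\phi_k(b)=b$ is the right one.
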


Fix  global Cartesian coordinates $(x, y)$ in $\pi$,  and let $D\subset \pi$ be the open unit disk centered at $0\in \pi$.  
Without further mention,  from now on it is to be understood that the estimates for elliptic equations from \cite{GT} that we are going to apply refer to this system of coordinates (possibly after composition with some fixed diffeomorphism). \rm

\vskip10pt
\noindent 
Recalling the definition of $U_{\pi}$ and $T_{\pi}$   from (\ref{U and T}), let $\Psi:\overline D \to C_1\setminus {U}_{\pi}$ be a smooth injection such that
\vskip10pt
\noindent (1) $T_{\pi}\cap \Psi(\overline D)=\emptyset$.
\vskip 10pt
\noindent (2) $b\in \Psi(\overline D).$
\vskip10pt
\noindent As pointed out, from (\ref{tending to the  inclusion}) one has $\phi_k(z) \to z$  uniformly for $z\in  C_1$. It now follows from  (1) above, with 
\begin{eqnarray*} \xi_k=\phi_k\circ \Psi:\overline{D} \to F^{-1}(\pi_k), \end{eqnarray*}
that for all sufficiently  large  $k$,

\vskip10pt
\noindent (3) $\xi_k(\overline D) \subset F^{-1}(\pi) \setminus \overline {U}_{\pi_k},$
\vskip10pt
\noindent (4) $ d(T_{\pi_k}, \xi_k(\overline D))\geq c_1>0,$
\vskip10pt
\noindent i.e. the distance between $T_k$ and $\xi_k(\overline D)$ is uniformly bounded below by a positive constant $c_1$ independent of $k$.

Consider now the positive harmonic functions  $u_{\pi_k}|\xi_k(\overline D)$.
In view of (4),   and since  $\xi_k(z) \to \Psi(z)$ uniformly,  the Harnack inequality (\cite {GT}, Corollary 8.21)
applies on $\xi_k(\overline D)$ with a constant $c_2>0$ that is independent  of all sufficiently large $k$:
\begin{eqnarray} \label {Harnack} \max_{\xi_k(\overline D)} u_{\pi_k}\leq  c_2\min_{\xi_k(\overline D)} u_{\pi_k}.  \end{eqnarray}

As $u_{\pi_k}(b)=1$, it follows from (2) above and (\ref{Harnack}) that the restrictions  $u_{\pi_k}|\xi_k(\overline D)$ are uniformly $C^0-$ bounded, independently of $k$, i.e.
 \begin{eqnarray} \label {bounded} \max_{\xi_k(\overline D)} u_{\pi_k}\leq  c_2, \end{eqnarray}
 for an absolute constant $c_2>0$.

Relative to the parametrizations $\xi_k$ of  (portions of) $F^{-1}(\pi_k)$,  the elliptic operators $\Delta_{\pi_k}$ satisfy the estimate
\begin{eqnarray} \label{Schauder} d|Du|_{0:\Omega'}+d^2|D^2 u|_{0:\Omega'}+d^{2+\alpha}||D^2u||_{\alpha:\Omega'}\leq c_3|u|_{0, \Omega}, \end{eqnarray}
where $\Omega'$ is a compact subset of the open set $\Omega$, $u=u_{\pi_k}$ and  $d, c_3$ are independent of all sufficiently large $k$ (see \cite{GT}, Corollary 6.3 with $f=0$,  and the remarks that follow it). 

From (\ref{bounded}) and (\ref{Schauder}),
the first and second derivatives of $u_{\pi_k}$, relative to the coordinates given by $\xi_k$,  are locally uniformly bounded. Hence, these functions $u_{\pi_k}$ and their first derivatives  constitute     equicontinuous families. One can then extract a subsequence of $u_{\pi_k}$ that converges locally uniformly in the $C^1$ norm. 
Since $\xi_k\to \Psi$ uniformly, by elliptic regularity the limit function is $\Delta_{\pi}$-harmonic on $\Psi(\overline D)$.

Next, one covers $C_1 \setminus \overline {U}_{\pi}$ by  countably many sets $\Psi_n(\overline D)$, where the  function
\begin{eqnarray*} \Psi_n:\overline D \to C_1 \setminus \overline U_{\pi} \end{eqnarray*}
plays the role of
$\Psi_1\stackrel {def}=\Psi$ in the preceding discussion; in other words,   the functions $\Psi_n$  satisfy (1) and  (2).

Starting with $\Psi_1$, we  extracted a subsequence of harmonic functions  that converges $C^1$-uniformly over $\Psi_1(\overline D)$.
Taking  the previous  subsequence of positive integers as  a new  starting point, one applies the same argument  in order to obtain a  convergent  subsequence of functions  corresponding to $\Psi=\Psi_2$ that converges $C^1$-uniformly over $\Psi_2(\overline D)$.

Iterating this process for all $\Psi_n$ one obtains a doubly infinite sequence,  and by the familiar argument using the diagonal  there is  a subsequence of functions that converges locally  uniformly to a harmonic function on $\cup_{j=1}^{\infty} \Psi_{n_j}(\overline D)=C_1\setminus \overline {U}_{\pi}$.

We now  consider the sequence obtained above as the starting point, and use the same arguments  replacing $C_1$ by the component $C_2\supset C_1$ of the domain $K_2\setminus F^{-1}(\pi)$ that contains $b$, in order to obtain a subsequence that converges locally uniformly on $C_2\setminus \overline {U}_{\pi}$. 

Keeping in mind that $\{C_m\}_{m=1}^{\infty}$ is an exhaustion of $F^{-1}(\pi)$,  and applying the previous  process   to $C_3, C_4, \dots,$ to obtain subsequences of previous subsequences, and then using the diagonal argument one last time,  one arrives at  a function  $\hat u_{\pi}$ satisfying:
\begin{eqnarray}\label{properties 1} \hat u_{\pi}: F^{-1}(\pi)\setminus \overline {U_{\pi}} \to [0, \infty),\;\;\; \Delta_{\mu} \hat u_{\pi}=0, \hat u_{\pi}(b)=1.\end{eqnarray}

Our next task is to examine the  behavior of   $\hat u_{\pi}$  near the boundary $T_{\pi}$.

\begin{lem}\label {fronteira} $\hat u_{\pi}$  extends continuously to $ F^{-1}(\pi)\setminus {U_{\pi}}$  by setting  $\hat u_{\pi}=0$ on $T_{\pi}$.
\end{lem}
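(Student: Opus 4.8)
The goal is to show that the harmonic function $\hat u_\pi$, obtained as a diagonal limit of the pull-backs of the $u_{\pi_k}$, vanishes continuously on the boundary curve $T_\pi$. The plan is to produce, locally near an arbitrary point of $T_\pi$, a \emph{uniform} barrier that bounds the $u_{\pi_k}$ and then passes to the limit. First I would fix $w_0 \in T_\pi \subset F^{-1}(\pi)$; since $T_\pi = \partial(U \cap F^{-1}(\pi))$ lies in the interior of the diffeomorphism patch $U \xrightarrow{F} W$, and since the surfaces $F^{-1}(\pi_k)$ converge $C^r$-uniformly over compact sets to $F^{-1}(\pi)$, for all large $k$ the boundary curves $T_{\pi_k}$ lie in a fixed compact neighborhood of $w_0$ and converge $C^r$-uniformly to $T_\pi$. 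Using the patch $F|_U$ one can write $U_{\pi_k} = U \cap F^{-1}(\pi_k)$ as the $F$-preimage of the disk-like region $W \cap \pi_k$, so the $T_{\pi_k}$ are uniformly smooth Jordan arcs.

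The heart of the argument is a uniform boundary estimate. Near $w_0$, choose isothermal coordinates on $F^{-1}(\pi)$ in which $T_\pi$ is (locally) a smooth curve; because of the $C^r$ convergence, the same chart serves (after the small perturbation $\phi_k$ or $\xi_k$ of Lemma \ref{phi}) as a chart on $F^{-1}(\pi_k)$ in which $T_{\pi_k}$ is a smooth curve converging to $T_\pi$, and the Laplacians $\Delta_{\pi_k}$ have coefficients converging uniformly with derivatives to those of $\Delta_{\pi}$. Each $u_{\pi_k}$ is a nonnegative $\Delta_{\pi_k}$-harmonic function on the side of $T_{\pi_k}$ away from $U_{\pi_k}$, vanishing on $T_{\pi_k}$ (property iii of Lemma \ref{terceiro lema}), and by (\ref{bounded}) it is bounded by the absolute constant $c_2$ on the relevant compact set. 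I would then invoke the boundary $C^{1,\alpha}$ estimate for the Dirichlet problem (\cite{GT}, Corollary 6.7, or Theorem 8.34 in the weak formulation), which near a $C^{1,\alpha}$ boundary portion on which $u_{\pi_k}=0$ gives $|u_{\pi_k}(w)| \le c_4\, d(w, T_{\pi_k})$ for $w$ in a fixed smaller neighborhood of $w_0$, with $c_4$ depending only on the (uniformly controlled) ellipticity constants, the $C^{1,\alpha}$ norm of the boundary, and $c_2$ — hence independent of all large $k$. Letting $k \to \infty$ along the subsequence defining $\hat u_\pi$, and using $d(w, T_{\pi_k}) \to d(w, T_\pi)$, one obtains $\hat u_\pi(w) \le c_4\, d(w, T_\pi)$ on that neighborhood, so $\hat u_\pi(w) \to 0$ as $w \to T_\pi$. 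Covering $T_\pi$ by finitely many such neighborhoods (it is compact, being the boundary of the relatively compact $U_\pi$) gives the continuous extension by $0$ on all of $T_\pi$, which is the assertion of the lemma.

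The main obstacle I anticipate is the \emph{uniformity} of the boundary gradient (or linear decay) estimate: the functions $u_{\pi_k}$ live on genuinely different surfaces with different metrics and different boundary curves $T_{\pi_k}$, so one cannot simply quote a single boundary Schauder estimate. The care needed is to set things up in one common coordinate chart — as the paper has been doing via the maps $\phi_k$ and $\xi_k$ — so that the data (ellipticity bounds, boundary regularity, $C^0$ bound $c_2$) entering the estimate of \cite{GT} are all uniform in $k$; once that is arranged the conclusion follows by the same ``pull back to a fixed chart, estimate uniformly, pass to the limit'' scheme used earlier in this section. A secondary, minor point is checking that $T_\pi$ is indeed compact and that it is disjoint from the exhaustion artifacts, so that finitely many neighborhoods suffice; this is immediate since $U_\pi = U \cap F^{-1}(\pi)$ is relatively compact in $F^{-1}(\pi)$ with $\overline{U_\pi}$ contained in the patch $U$.
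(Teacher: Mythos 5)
Your plan is essentially the paper's own argument: work in a fixed coordinate chart via the maps $\phi_k$, $\xi_k$, obtain a boundary decay estimate for the $u_{\pi_k}$ near $T_{\pi_k}$ with constants uniform in $k$, and pass to the limit along the defining subsequence. The only substantive difference is the estimate invoked: you use a boundary Schauder/gradient estimate giving linear decay $u_{\pi_k}(w)\leq c_4\, d(w,T_{\pi_k})$ under $C^{1,\alpha}$ boundary regularity, while the paper uses the oscillation estimate (8.72) of \cite{GT} under a uniform exterior cone condition, giving H\"older decay $C'R^{\alpha}$; both suffice, and yours asks for (available) extra boundary smoothness in exchange for a stronger conclusion.

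One step deserves more care than you give it. The boundary estimate you quote requires a uniform sup bound on $u_{\pi_k}$ \emph{up to} $T_{\pi_k}$, and the bound (\ref{bounded}) does not directly provide this: it holds only on $\xi_k(\overline D)$, where $\Psi(\overline D)$ was chosen disjoint from $T_{\pi}$, precisely because the Harnack inequality is an interior estimate and cannot be applied on a region touching the zero set of $u_{\pi_k}$. The paper closes this by introducing an annulus $A_{\pi_k}$ between $T_{\pi_k}$ and an outer curve $\widetilde T_{\pi_k}$ at uniformly positive distance, bounding $u_{\pi_k}$ on $\widetilde T_{\pi_k}$ by Harnack (using $u_{\pi_k}(b)=1$), and then invoking the maximum principle on $A_{\pi_k}$ (where $u_{\pi_k}=0$ on the inner boundary) to get the uniform $C^0$ bound on the full annulus. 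Your argument needs this intermediate maximum-principle step before the boundary estimate can be applied; with it inserted, the rest of your plan goes through.
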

\begin{proof}  We begin by recalling the  estimate for the oscillation  of the solution of an elliptic equation $Lu=0$,  with $u$ continuous up to the boundary of  a domain  
$\Omega \subset \mathbb R^n$ (see \cite {GT}, p. 204, eqn. (8.72) for details):
\begin{eqnarray}\text{osc} \label {oscilacao} (u|_{\Omega \cap B_R}) \leq C\{R^{\alpha}(R_0^{-\alpha} \sup_{\Omega \cap B_0} |u| +k) +\sigma(\sqrt {RR_0})\}.
\end{eqnarray}

Here, $0<R\leq R_0$, $B_0=B_{R_0}(x_0)$,  where $x_0\in \partial \Omega$ is a point where  $\Omega$ satisfies an exterior cone condition, and  $\sigma(R)$ is the oscillation of $u$ on $\partial \Omega \cap B_R(x_0)$:
\begin{eqnarray*} \sigma(R)=\sup_{\partial \Omega\cap B_R(x_0)}u -\inf_{\partial \Omega\cap B_R(x_0)}u.\end{eqnarray*}
 
Also, $C$, $k$  and $\alpha$ are positive constants that, ultimately,  depend only on the dimension,  $R_0$, the coefficients of $L$,  and the exterior cone $V_{x_0}$.

In particular, if $\Omega$ is smooth the boundary of the compact domain $\Omega$ will satisfy a uniform exterior cone condition.  Hence, for a smooth family of operators  and smooth relatively compact domains that are small  perturbations of a fixed operator and a fixed domain,  one can pick     values of $C$ and $\alpha$ in (\ref{oscilacao}) that work for all operators in the family. 

This will be the case in our current application, after using coordinates as before, since the boundary pieces $T_{\pi_k}$ have uniformly bounded geometry,  thus guaranteeing the uniform cone condition.

Again, referring  to section 4 for notation, consider  an Euclidean ball $\widetilde W$ in the codomain of $F$, slightly bigger than $W$,  $\partial W \cap \partial \widetilde W =\emptyset$, with a corresponding neighborhood $\widetilde U$ in the domain of $F$, where $a\in U\subset \overline U \subset \widetilde U$,  so that  $\widetilde{U}$   is mapped diffeomorphically by $F$ onto $\widetilde W$. Set
\begin{eqnarray} \label{ok}\widetilde  T_{\pi_k}= \partial \widetilde U \cap F^{-1}(\pi_k), \;\;\;  \widetilde T_{\pi}=\partial U \cap F^{-1}(\pi).\end{eqnarray}

Hence, for $k$ large but fixed, $T_{\pi_k}$ and $\widetilde T_{\pi_k}$  are the inner and outer boundaries, respectively,  of a closed annulus $A_{\pi_k}  \subset F^{-1}(\pi_k)$. Similar considerations apply to $ T_{\pi}$ and $\widetilde T_{\pi}$, with corresponding annulus $A\subset F^{-1}(\pi)$.

We wish to apply (\ref{oscilacao}) to  $\Delta_{\pi_k}$ on $\Omega=A_{\pi_k}$, considering   the $\Delta_{\pi_k}$-harmonic functions $u_{\pi_k}$ 
near the boundary $T_{\pi_k}$, where the operators have been written in  coordinates given by $\xi_k$  as in the proof of  Lemma \ref{phi}.

Using the previous Harnack inequality argument, keeping in mind that all functions $u_{\pi_k}$ have value 1 at $b$,  and
\begin{eqnarray} \inf_k d(T_{\pi_k}, \widetilde T_{\pi_k})>0 \end{eqnarray}
for all sufficiently large $k$, one sees that   the  values of the functions $u_{\pi_k}$  are bounded on $\widetilde T_{\pi_k}$ (the outer boundary of $A_{\pi_k}$) by a constant independent of $k$. 

Therefore, since $u_{\pi_k}$ vanishes on the inner boundary
$T_{\pi_k}$ of $A_{\pi_k}$, by the maximum principle the function   $u_{\pi_k}$ is  bounded on the entire closed annulus $A_{\pi_k}$ by  a constant independent of all sufficiently large $k$. In particular, the oscillation of the $u_{\pi_k}$ is uniformly bounded on $A_{\pi_k}$.

We use this information in (\ref{oscilacao}) as follows.  Fix a point $x_{0}$ in the inner boundary $ T_{\pi_k}$ of $A_{\pi_k}$.  By the above, we can choose $R_0$ sufficiently small  so that the intersection of
$F^{-1} (\pi_k)\setminus U_{\pi_k}$ and the open ball centered at $x_{0}$ and radius $R_0$ is  disjoint from $\widetilde T_{\pi_k}$ (again, one should be reminded that we are working in coordinates). 

Putting all these observations together, and keeping in mind that $u_{\pi_k}$  vanishes on the inner boundary, and using the definition of $\sigma$, one has  $\sigma\equiv 0$, with   $\Omega=A_{\pi_k}$. Hence  (\ref{oscilacao}) assumes the form
\begin{eqnarray*}
\text{osc} \label {oscilacao2} (u_{\pi_k}|_{S_{\pi_k} \cap B_R}) \leq C'R^{\alpha}, \;\; 0<R \leq R_0,
\end{eqnarray*}
for some absolute constant $C'$ independent of  $k$.

Since $u_{\pi_k}=0$ on $T_{\pi_k}$, this implies that for every $x$ in the \it interior \rm of   $S_{\pi_k} \cap B_R$,  $x$ fixed,
\begin{eqnarray} \label {close to the end} u_{\pi_k}(x)\leq C'R^{\alpha}, \;\; 0<R\leq R_0.\end{eqnarray}

In local coordinates the function $\hat u_{\pi}$, originally defined in the \it open \rm set $F^{-1}(\pi)\setminus \overline {U_{\pi}}$,  was obtained as the local uniform limit of a subsequence of  $(u_{\pi_k})$, and  so
(\ref {close to the end}) implies, for every $x$ in the \it interior \rm of   $S_{\pi} \cap B_R$:   
\begin{eqnarray}  \label {close to the end 2} \hat u_{\pi}(x)\leq C'R^{\alpha}, \;\; 0<R\leq R_0.\end{eqnarray}
It now follows from (\ref{close to the end 2}), by letting $R\to 0$,  that  $\hat u_{\pi}(x)\to 0$ uniformly  as $x\in S_{\pi}$ approaches  $T_{\pi}$. This concludes the proof of Lemma \ref{fronteira}.

\end{proof}

\vskip15pt

We can now finish the proof of  Theorem \ref{parabola}.  By (\ref{properties 1}) and Lemma \ref{fronteira},   we have 

\begin{eqnarray} \label {quasedesapareceu} \hat{u}_{\pi}: F^{-1}(\pi)\setminus \overline{U}_{\pi}\to (0, \infty), \; \Delta_{\pi}\hat{u}_{\pi}=0, \hat {u}_{\pi}(b)=1, \;  
\lim_{p\to T_{\pi}}\hat{u}_{\pi}(p)=0.
\end{eqnarray}

(The function in   (\ref{quasedesapareceu}), although obtained as the limit of positive functions,   is indeed strictly positive in the interior,  by the maximum  principle.)

Observing that $F^{-1}(\pi)\setminus \overline {U_{\pi}}$ is conformal to the punctured open unit disc $B\setminus \{0\}$, after composition with a suitable conformal map that applies $T_{\pi}$ onto the unit circle, we can regard $ \hat u_{\pi}$ as being a positive harmonic function defined on $B\setminus \{0\}$.

By Lemma \ref{B}  (B\^ocher's theorem) this
composed function  is of the form $u(z)=\nu(z)-c\log |z|$, where
$\nu$  is harmonic on  $B$ and $c\geq 0$. We claim that the constant $c$ cannot be zero.

Indeed, if $c=0$, looking back at the surface and then at the Riemann sphere, the function associated to $\hat u_{\pi}$ would solve a Dirichlet problem on the northern hemisphere with  boundary values zero on the equator. By uniqueness one would have  $\hat u_{\pi}\equiv 0$,  contradicting $\hat u_{\pi}(b)=1$.

Hence, the function $u$ above has a logarithmic singularity at $0$ and vanishes at the unit circle. 
Unwinding the above composition, this means that, back in the surface $F^{-1}(\pi)$, 
one has $\lim \hat u_{\pi}(p)=\infty$ uniformly as $|p|\to \infty$ in $\mathbb R^n$,  $p\in F^{-1}(\pi) \setminus \overline {U_{\pi}}.$

This last property and  (\ref{quasedesapareceu}),  show that $\hat u_{\pi}$  satisfies (i)-(iv) in Lemma \ref{terceiro lema}.
By the  uniqueness part of the said lemma, $ \hat u_{\pi}=u_{\pi}$.

Since the convergence in the above arguments is in fact $C^k$ for any $k\geq 1$, we have concluded that given any sequence $\pi_k\to\pi$ there is a subsequence $\pi_{k_l}$ for which
$$\lim_{k_l\to \infty}  \nabla u_{\pi_{k_l}}(b)=\nabla {u}_{\pi}(b),$$
where the gradients are taken in their respective surfaces but are viewed as vectors in $\mathbb R^n$.

It follows that  the full sequence $\big ( \nabla u_{\pi_{k}}(b)\big)$  is bounded and, in fact, has only one limit point, so that 
$$\lim_{k\to \infty}  \nabla u_{\pi_{k}}(b)=\nabla {u}_{\pi}(b).$$

In particular, (\ref{subsequence}) holds. This establishes the continuity of $s$, thus concluding the proof of Theorem \ref{parabola}. 

\qed

\vskip10pt

\begin{rem} \rm The hypothesis in Theorem \ref{parabola} specifies that $F^{-1}(\pi)$  is diffeomorphic  to $\mathbb R^2$ and, as a Riemann surface,  biholomorphic  to $\mathbb C$ rather than to  the open unit disc $D \subset \mathbb C$.     Equivalently, from a Riemannian perspective, $F^{-1}(\pi)$ is diffeomorphic to $\mathbb R^2$ and any bounded function on $F^{-1}(\pi)$ that is harmonic in the Riemannian sense is necessarily constant (a Riemannian manifold with the property that every bounded harmonic function is constant is said to satisfy the Liouville property). 

The  assumption  that  $F^{-1}(\pi)$ is conformal to $\mathbb C$ was used  in a crucial way in section 4.
Indeed,   any doubly-connected Riemann surface  is conformal to some  (possibly degenerate) ring $R_{a}=\{z : a<|z|<1\}$, where $0\leq a <1$.
In the  proof of  Lemma \ref{terceiro lema} one uses the fact that, since $F^{-1}(\pi)$ is conformal to $\mathbb C$,    $F^{-1}(\pi)\setminus  \overline U$ is conformal to a punctured neighborhood of infinity in the Riemann sphere which, in turn, is conformal to the degenerate ring $R_0$.

If,  instead,  $F^{-1}(\pi)$ was conformal to $D$,  then $F^{-1}(\pi)\setminus  \overline U$ would be conformal to $R_a$ for some  $a>0$,   and one would not be dealing with an isolated singularity.
This would preclude us from using  Lemma \ref {segundo lema}.  As a consequence,  it would not be clear even how to  define, in a canonical way, an analogue for   the vector field $s$ in Lemma \ref{quarto lema}. In passing, we remark  that the continuity proof given in this section also required  $F^{-1}(\pi)\setminus  \overline U$ to be conformal to $R_0$,   so that Lemma \ref {B} (B{\^o}cher's theorem) could be applied.

\end{rem}

 \section{Condensers, Euler numbers,  and small fibers}

In this section we use the solutions of certain Dirichlet problems in order to prove Theorem \ref{parabola dois}. A noteworthy  feature of this new construction is that, unlike the one used to prove Theorem \ref{parabola}, it is meaningful on any connected finitely punctured  compact Riemann surface, and not only on surfaces  with genus zero. We will elaborate further on this point in section 8.

We assume, by contradiction,  that $F(p_1)=F(p_2)=F(p_3)=q=0$,  where the points in question are all distinct (composing with a translation, there is no loss of generality in assuming that $q=0$). 

Let $W$ be a small Euclidean ball around $0$ in $\mathbb R^n$ and $U_1$, $U_2$ the corresponding neighborhoods of $p_1$ and $p_2$ that are mapped diffeomorphically onto $W$. Write $U_1^{\pi}$, $U_2^{\pi}$ for the intersections of these neighborhoods with the pre-image $F^{-1}(\pi)$ of a two-dimensional vector subspace $\pi$  that is parallel to some tangent plane of $M^2$, and set  $T_i^{\pi}=\partial U_i^{\pi}$, $i=1,2$.

Using the fact that $F^{-1}(\pi)$ is conformally a punctured sphere,   and  isolated singularities  of bounded harmonic functions are removable (in our case, by  assumption  the topological ends  of $F^{-1}(\pi)$ are conformal to punctured discs), one can argue that   that there exists  a unique $\underline{\rm  bounded}$  harmonic function $u_{\pi}$ on the complement of  the closure of
$U_1^{\pi} \cup U_2^{\pi}$ in $F^{-1}(\pi)$ that extends continuously to $T_1^{\pi}\cup T_2^{\pi}$, and satisfies
$u_{\pi}| T_1^{\pi}=0,  \;\;u_{\pi}| T_2^{\pi}=1$.

In the classical literature, this type of Dirichlet problem is  often referred to as a $\underline{\rm condenser}$. As indicated above, this part of the argument also works when $F^{-1}(\pi)$ is conformal to a finitely punctured connected compact Riemann surface of positive genus.
For an  illustration of the condenser construction in a genus two surface, see Fig.2.

\begin{figure}[!ht]
\vspace{-0.5
cm}
\begin{center}
\includegraphics[scale=0.39]{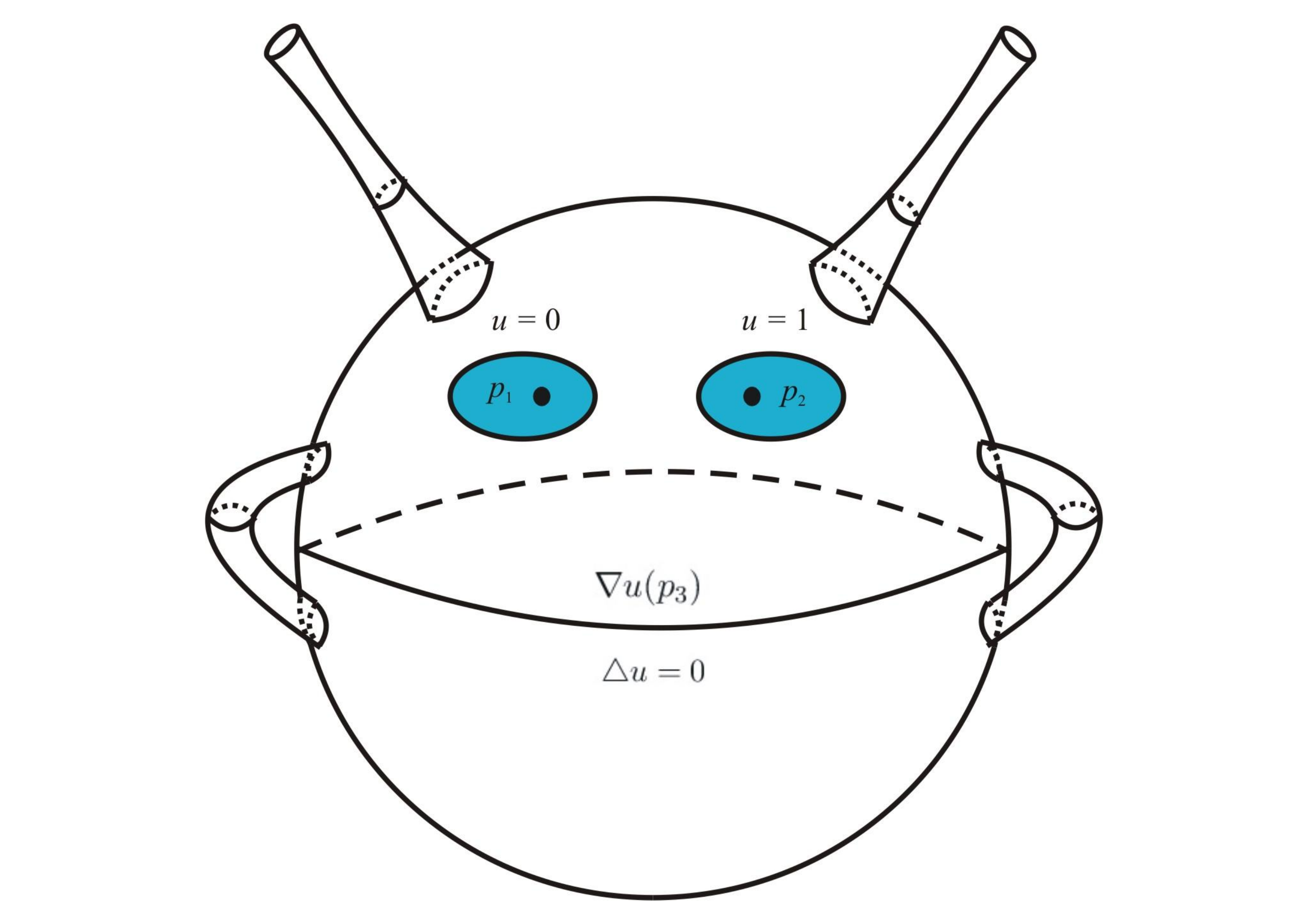}
\vspace{-1.1cm}
\vskip15pt
\caption{\footnotesize {The condenser method when $F^{-1}(\pi)$ is conformal to some twice punctured double torus.}}
\label{Fig:01}
\end{center}
\vspace{-0.5cm}
\end{figure}

Letting $\pi_p=T_pM^2$,    one sees  just as in section 4   that the  assignment  
\begin{eqnarray} \label {secao} M^2\ni p \to  s(p)  = DF(p_3)\nabla u _{\pi_{p}} (p_3)\in T_pM^2 \end{eqnarray}
defines a tangent vector field  on  $M^2$. A contradiction to $\chi(M^2)\neq 0$ will follow as soon as we establish that this vector field is continuous and non-zero.

\vskip10pt
\noindent \bf Proof of Theorem  \ref{parabola dois}.\rm 
\vskip10pt
\noindent The bulk of the proof is quite similar to the technical work done in section 5 to prove Theorem \ref{parabola}.
In the interest of avoiding unnecessary duplication, we keep the same notation wherever possible and only concentrate on  the  steps where the argument needs to be modified. Notice that the definition of $s$ given in (\ref{secao}) is similar 
to (\ref{new def}). 

One needs to establish the analogue of (\ref{subsequence}), where the point $b$ has been replaced by $p_3$. Keeping the same notation as in  
section 5, we let $\{K_n\}$ be an exhaustion of $\mathbb R^n$ by compact sets. Let $C_1$ be the connected component of 
$K_1 \cap [F^{-1}(\pi)\setminus \overline {(U_1^{\pi_z} \cup U_2^{\pi_z})}]$
that contains $p_3=b$. 

One can construct $\phi_k:C_1\to F^{-1}(\pi_k)\subset \mathbb R^n$ as in  (\ref{tilde}).  Lemma \ref{phi} is in force,  and there is  a  smooth injection 
\begin{eqnarray*} \Psi:\overline D \to C_1\setminus {(U_1^{\pi_z}\cup U_2^{\pi_z})} \end{eqnarray*} 
such that  $\Psi(\overline D)$ is disjoint from the boundaries $T_1^{\pi_z}$,  $T_2^{\pi_z}$, and $\xi_k=\phi_k\circ \Psi$
satisfies $\xi_k(\overline D) \subset F^{-1}(\pi)\setminus \overline{(U_1^{\pi_z}\cup U_2^{\pi_z})}$.  Also,  
 $d(T_i^{\pi_k}, \xi_k(\overline D))>c_1>0, \; i=1,2$,  where the constant $c_1$ is independent of all sufficiently large $k$. 

By the maximum principle, $0\leq u_{\pi_k} \leq 1$ everywhere, and so there is no need to argue with the Harnack inequality. It follows directly from  the interior estimate (\ref{Schauder})
that one can extract a subsequence of $(u_{\pi_k})$ so that the induced functions 
 converge locally uniformly in the $C^1$ norm to a $\Delta_{\pi}$-harmonic function on $\Psi(\overline D)$.

Next, one covers $C_1\setminus \overline{(U_1^{\pi_z}\cup U_2^{\pi_z})}$ with  countably many sets $\Psi_n(\overline D)$, where $\Psi_1=\Psi$ and, for $n>1$, $\Psi_n$ has similar properties. Working with convergent subsequences of functions (restricted to a larger set) of subsequences that were previously shown to be convergent  on smaller sets, and then using the diagonal argument, as in the discussion preceding Lemma \ref{fronteira}, one finally obtains a $\Delta_{\pi}$-harmonic function 
$$\hat u_{\pi}:F^{-1}(\pi) \setminus \overline{(U_1^{\pi_z}\cup U_2^{\pi_z})}\to [0,1].$$

Using  the boundary estimates (\ref{oscilacao}), just as before,  one can  argue that $\hat u_{\pi}$ extends continuously to $T_1^{\pi_z}$,  $T_2^{\pi_z}$ with boundary values $0$ and $1$, respectively.  By the uniqueness of the solution of the Dirichlet problem,  $\hat u_{\pi}=u_{\pi}$, and from this  (\ref{subsequence}) follows, where $b$ has been replaced by $p_3$.  Hence  the continuity of $s$ has been established.

If one can argue  that  $\nabla u _{\pi_{p}} (p_3)\neq 0$  for all $\pi_p$, $p\in M^2$, then by ({\ref{secao})  the vector field $s$ would be not only continuous,  but also zero-free, a contradiction to
the Poincar\'e-Hopf theorem since  $\chi(M^2)\neq 0$.

As the ends of $F^{-1}(\pi_p)$ are conformally punctured discs associated  to the removable  singularities of the $\underline{\rm bounded}$ harmonic function $u_{\pi_p}$, and doubly-connected planar regions are conformal to  circular annuli, which are non-degenerate in the present context since they correspond to the complement of 
$U_1^{\pi_p} \cup U_2^{\pi_p}$ in the conformal compactification of $F^{-1}(\pi_p)$, 
we only need to check the non-vanishing of $\nabla u$  when  $u$ is the continuous   function  defined on the closed annulus
$\{z\in \mathbb C: 0<a<|z|\leq 1\}$ that is  harmonic in its interior and satisfies    $u=0$ ($u=1$) in the inner (outer) boundary circle. In this set-up  the boundary circles correspond to $T_1^{\pi_p}$ and $T_2^{\pi_p}$. 
In other words, one only needs to deal with a standard condenser. But  here the solution of the Dirichlet problem is quite explicit, namely:
\begin{eqnarray} \label{condenser} u(z)=\frac{\log a -\log|z|}{\log a}. \end{eqnarray}
Since $\nabla u\neq 0$ everywhere,   $s$   would   be continuous and nowhere zero,  and the proof  of Theorem \ref{parabola dois} is complete.

\qed 

\section{A general theorem  and  the inversion  of  polynomial maps}
 \vskip5pt
\noindent The following points about   Theorems \ref{parabola}  and   \ref {parabola dois}   are worth emphasizing:
\vskip5pt

\noindent a) Instead of endowing the domain $\mathbb R^n$ of $F$ with the standard flat metric, the  proofs  can be easily  modified so as to allow for  $F$ to be  defined on any connected non-compact  manifold $N^n$   carrying  a Riemannian metric that induces on the surfaces $F^{-1}(\pi)$ the  conformal type of  $\mathbb R^2$ in Theorem \ref{parabola}, and that of a   finitely punctured sphere in Theorem \ref{parabola dois}.
\vskip10pt
\noindent b) The  tangent bundle of $M^2$  was used only to the extent that one needed to guarantee that every  continuous section $s$ (i.e. a vector field) must have a zero.

\vskip10pt
\noindent With  remarks a) and b)  in mind, we can state a general  ``abstract"  theorem that  subsumes  both Theorems \ref{parabola} and   \ref {parabola dois},  as well as the algebraic result in \cite{NX1} about the Jacobian conjecture (see Theorem \ref{Jac} below).  The proof of the result below  involves only obvious  adjustments  of the proofs of the above mentioned theorems.
\vskip10pt

\begin{thm} \label{geral}
Let  $N^n$ be a connected  non-compact manifold, $F:N^n \to \mathbb R^n$  a local diffeomorphism,  $n\geq 3$,  $q\in F(N^n)$. Assume the existence of a Riemannian metric $g$ on $N^n$,   and a  smooth  vector bundle $T$  for which the following holds:
\vskip5pt
\noindent i) Every     fiber $\xi$   of $T$  belongs to 
$G_2(\mathbb R^n)$  and  the surface $F^{-1}(q+\xi)$  is  conformally diffeomorphic, with respect  to the metric induced by $g$,  to  a  punctured sphere 
$S^2 \setminus \{p_1, \dots, p_{k(\xi)} \}$.
\vskip5pt
\noindent ii) Every continuous section of  $T$  has a zero.
\vskip5pt
\noindent Then $F^{-1}(q)$  has one or two elements. The first alternative holds if  $k(\xi)=1$ for every $\xi$.
\end{thm}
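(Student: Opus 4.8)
The plan is to mimic the proofs of Theorems \ref{parabola} and \ref{parabola dois}, extracting the two features that were actually used: the conformal type of the fibers $F^{-1}(q+\xi)$, and the fact that a suitable vector bundle over the "base of planes" has no zero-free continuous section. As before, replace $F$ by $F-q$ so that $q=0$. Suppose for contradiction that $\#F^{-1}(0)\geq 2$ in the case $k(\xi)\equiv 1$, and $\#F^{-1}(0)\geq 3$ in the general case; in each case fix distinct preimages, say $a,b$ (respectively $p_1,p_2,p_3$), and small Euclidean balls around $0$ whose diffeomorphic preimage neighborhoods contain these points. For a fiber $\xi$ of $T$, all the chosen preimages lie on the surface $F^{-1}(\xi)$, since $F$ maps them to $0\in\xi$.

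First I would handle the case $k(\xi)\equiv 1$. Here each $F^{-1}(\xi)$ is conformal to $\mathbb{C}$, so the construction of Lemma \ref{terceiro lema} goes through verbatim: on $F^{-1}(\xi)\setminus\overline{U_\xi}$ (conformal to a punctured disc) there is a unique nonnegative $\Delta_\xi$-harmonic function $u_\xi$ blowing up logarithmically at infinity, vanishing on $T_\xi$, and normalized by $u_\xi(b)=1$, with $du_\xi(b)\neq 0$. Then, for a point $p$ in the total space picture, using that the fiber of $T$ over $p$ is an element $\xi_p\in G_2(\mathbb{R}^n)$, the assignment $p\mapsto dF(b)\nabla u_{\xi_p}(b)$ is a well-defined section of $T$: the gradient lies in $T_b(F^{-1}(\xi_p))$, and $dF(b)$ carries that space onto $\xi_p=T_{F(b)}\xi_p$, which by hypothesis i) is exactly the fiber of $T$ over $p$. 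Since $dF(b)$ is nonsingular and $\nabla u_{\xi_p}(b)\neq 0$, this section is zero-free. The continuity proof is identical to section 6 — the convergence arguments there only used compact exhaustions of $F^{-1}(\pi)$, elliptic estimates in coordinates, Harnack, B\^ocher's theorem, and the uniqueness in Lemma \ref{terceiro lema}, none of which referred to the tangent bundle of $M^2$. This contradicts hypothesis ii).

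For the general case one runs the condenser construction of section 7 instead. Each $F^{-1}(\xi)$ is conformal to a finitely punctured sphere; the complement of $\overline{U_1^\xi\cup U_2^\xi}$ in its conformal compactification is a non-degenerate planar region, so there is a unique bounded harmonic function $u_\xi$ on $F^{-1}(\xi)\setminus\overline{(U_1^\xi\cup U_2^\xi)}$ with boundary values $0$ on $T_1^\xi$ and $1$ on $T_2^\xi$ (bounded harmonic functions extend across the other punctures). The section is $p\mapsto dF(p_3)\nabla u_{\xi_p}(p_3)$; it lands in the fiber of $T$ over $p$ by the same linear-algebra observation as above, continuity follows from the section 7 argument (maximum principle replacing Harnack, boundary oscillation estimates, uniqueness of the Dirichlet solution), and non-vanishing reduces, exactly as on p.~\pageref{condenser}, to the explicit model condenser $u(z)=(\log a-\log|z|)/\log a$ on a circular annulus, whose gradient is nowhere zero. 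Again ii) is contradicted. Therefore $\#F^{-1}(0)\leq 2$, with $\#F^{-1}(0)=1$ when $k(\xi)\equiv 1$.

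The main obstacle is purely bookkeeping: one must be sure that the perturbation/pull-back machinery of sections 6–7, which was written for the specific family $\{\pi_p=T_pM^2\}$ with $\pi_k\to\pi$ coming from orthogonal transformations, still applies when "the family of planes" is an arbitrary smooth vector bundle $T$. This is fine because local triviality of $T$ lets one realize nearby fibers $\xi_k\to\xi$ as images $S_k(\xi)$ with $S_k\in O(n)$, $S_k\to I$ (choose a continuous local frame and complete it), so every estimate that was uniform "for $\pi_k$ near $\pi$" is again uniform here. Beyond that, the only genuinely new point is checking that $p\mapsto \nabla u_{\xi_p}(\cdot)$ actually is a \emph{section} of $T$ rather than of some other bundle — and that is immediate from $dF(b)[T_b F^{-1}(\xi)]=\xi$ together with hypothesis i) identifying $\xi$ with the fiber of $T$. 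No further ideas are needed; the proof is, as the statement promises, "obvious adjustments" of the two preceding ones.
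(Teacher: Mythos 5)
Your proposal is correct and follows exactly the route the paper intends: the paper itself only records remarks a) and b) of Section 8 and declares the proof to be the ``obvious adjustments'' of Theorems \ref{parabola} and \ref{parabola dois}, and your write-up carries out precisely those adjustments (abstract bundle $T$ in place of $TM^2$, metric $g$ in place of the flat metric, local triviality supplying the $S_k\in O(n)$ needed for the perturbation machinery of Sections 6--7). Nothing further is required.
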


There is a slight redundancy in the statement of Theorem \ref{geral},  in the sense that the hypothesis $n\geq 3$  follows from  i) and ii). Indeed, if $n=2$  the first half of  i) implies that $T$ is the trivial bundle, contradicting ii).

\vskip5pt

In order to  recover Theorems \ref{parabola}  and   \ref {parabola dois}  from Theorem \ref{geral},    it suffices to take $g$ to be the standard flat metric on $N^n=\mathbb R^n$, and for 
$T$  the tangent bundle of, say,  an  embedded sphere $S^2\subset \mathbb R^n$.  

\vskip10pt

The invertibility criterion in  (JC ) stated  below, and alluded to in Example 2 of section 2,  was  first   proved  in \cite{NX1}:

\begin{thm} \label {Jac}  A  polynomial local biholomorphism $F:\mathbb C^n \to \mathbb C^n$     is invertible if and only if  for  the generic point $q$ in $F(\mathbb C^n)$ the following  holds. For every complex line containing $q$      there is  an open  connected  set $\mathcal O_l\subset \mathbb R^2$ that   is homeomorphic to   $F^{-1}(l) $.
\end{thm}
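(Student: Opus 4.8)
\textbf{Proof proposal for Theorem \ref{Jac}.}

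The plan is to deduce both directions from the abstract mechanism in Theorem \ref{geral}, together with standard facts about polynomial local biholomorphisms and one algebro-geometric input on generic fibers. First I would dispose of the easy direction: if $F$ is invertible it is a biholomorphism $\mathbb C^n \to \mathbb C^n$, so for \emph{every} complex line $l$ the preimage $F^{-1}(l)$ is biholomorphic (hence homeomorphic) to $l \cong \mathbb C \cong \mathbb R^2$, and one may take $\mathcal O_l = \mathbb R^2$; in particular the condition holds for the generic $q$. The substance is the converse.

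For the converse, suppose the stated condition holds at a generic point $q$. The key reduction is the standard fact that a polynomial local biholomorphism $F:\mathbb C^n\to\mathbb C^n$ restricts to a covering map over the complement of a proper algebraic subvariety $\Sigma \subset \mathbb C^n$ (the set of points over which $F$ fails to be proper, together with the image of the ramification locus, which is empty here), and that over $\mathbb C^n\setminus\Sigma$ all fibers $F^{-1}(w)$ have the same finite cardinality $d = \deg F$. So it suffices to prove that some (equivalently, the generic) fiber is a singleton: then $d=1$ and $F$ is injective, and by a standard argument (a proper local biholomorphism, or a degree-one local biholomorphism onto $\mathbb C^n$) $F$ is a biholomorphism, hence invertible. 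To show the generic fiber is a singleton I would apply Theorem \ref{geral} with $N^n = \mathbb C^n \cong \mathbb R^{2n}$ (note $2n \geq 3$ since $n\geq 2$; the case $n=1$ is trivial), $g$ the standard flat metric, and $T$ the tangent bundle of an embedded $S^2 \subset \mathbb R^{2n}$, exactly as in the reduction of Theorems \ref{parabola} and \ref{parabola dois}. One must check hypothesis i): for a complex line $l = q + \xi$, the real $2$-plane $\xi$ underlying $l$ gives a preimage $F^{-1}(q+\xi)$ which is a one-dimensional complex submanifold of $\mathbb C^n$, properly embedded and smooth (as $F$ is a local biholomorphism), hence a (possibly disconnected) Riemann surface; by the hypothesis at the generic $q$ each component homeomorphic to an open planar domain, and — invoking that $F^{-1}(l)$ is an affine algebraic curve whose smooth model is obtained by removing finitely many points from a compact Riemann surface — one argues it is connected and rational, i.e. conformally $S^2$ minus finitely many punctures, so $k(\xi)<\infty$. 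Hypothesis ii) holds because $\chi(S^2)=2\neq 0$, by Poincaré–Hopf. Theorem \ref{geral} then yields $\#F^{-1}(q)\in\{1,2\}$.

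To rule out $\#F^{-1}(q)=2$ I would invoke the algebro-geometric fact, cited in the excerpt (section 4), that the generic fiber of a polynomial local biholomorphism $\mathbb C^n\to\mathbb C^n$ cannot have cardinality exactly two — equivalently, $\deg F \neq 2$; this follows from a monodromy/Bertini-type argument, or from the observation that a degree-two cover of $\mathbb C^n$ branched nowhere would force a nontrivial unramified double cover of the simply connected $\mathbb C^n\setminus\Sigma$ only if $\pi_1(\mathbb C^n\setminus\Sigma)$ surjects onto $\mathbb Z/2$, which can be excluded for the relevant $\Sigma$, or more simply from the reduction-theorem normal forms. Hence $\#F^{-1}(q)=1$ for generic $q$, completing the proof.

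The main obstacle I anticipate is verifying hypothesis i) of Theorem \ref{geral} in full: passing from the bare topological hypothesis ``$F^{-1}(l)$ is homeomorphic to an open connected subset of $\mathbb R^2$'' to the precise conformal statement ``$F^{-1}(l)$ is biholomorphic to $S^2$ minus finitely many points''. This requires knowing that the affine curve $F^{-1}(l)$, as a smooth algebraic curve, has finite genus and finitely many ends, and that an open connected planar domain which is simultaneously such a curve must have genus zero — i.e. reconciling the topological input with the algebraic structure of the fiber. The connectedness is the delicate part and is precisely where the hypothesis being stated for the \emph{generic} $q$ (rather than a fixed one) is used, since generic fibers of the dominant morphism $F$ restricted to $l$-preimages are well-behaved. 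Everything else is a transcription of the arguments already carried out for Theorems \ref{parabola} and \ref{parabola dois}.
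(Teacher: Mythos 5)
Your overall architecture matches the paper's: reduce to Theorem \ref{geral}, conclude $\#F^{-1}(q)\in\{1,2\}$, and rule out cardinality two by the algebro-geometric fact from \cite{BCW} (Theorem 2.1, a) $\Leftrightarrow$ g)); the easy direction and the final step (injectivity implies surjectivity for polynomial maps) are also as in the paper. But there is a genuine gap in how you instantiate Theorem \ref{geral}: you take $T$ to be the tangent bundle of an embedded $S^2\subset\mathbb R^{2n}$, ``exactly as in the reduction of Theorems \ref{parabola} and \ref{parabola dois}.'' That choice is incompatible with the hypothesis of Theorem \ref{Jac}. Hypothesis i) of Theorem \ref{geral} must be verified for \emph{every} fiber $\xi$ of $T$, and the tangent planes of a (generically) embedded $S^2$ in $\mathbb R^{2n}$ are arbitrary real $2$-planes, almost none of which are complex lines; the hypothesis of Theorem \ref{Jac} controls only the preimages of \emph{complex} lines. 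Indeed you tacitly acknowledge this by checking i) only for complex lines $l=q+\xi$, which does not cover the fibers of your bundle. Note also that one cannot repair this by embedding a surface all of whose tangent planes are complex lines, since $\mathbb C^n$ contains no compact holomorphic curves.

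The paper's fix is exactly what the abstract formulation of Theorem \ref{geral} was designed for (its remark b): one does not need a tangent bundle, only a rank-two bundle with no nonvanishing section whose fibers are planes for which the hypothesis applies. Fix $V\cong\mathbb C^2\subset\mathbb C^n$, take $N=F^{-1}(V)$ (connected, because each $F^{-1}(l)$ is connected and contains the fiber over $q$), and take $T$ to be the tautological line bundle over $\mathbb C\mathbb P^1$, whose fibers are precisely the complex lines through $q$ in $V$, viewed as a real rank-two bundle over $S^2$. Hypothesis ii) then follows not from $\chi(S^2)=2$ via Poincar\'e--Hopf for vector fields, but from the fact that the Euler number of the tautological bundle is $-1\neq 0$. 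With that substitution your argument goes through; the remaining loose end you flag (upgrading ``homeomorphic to a planar domain'' to ``conformal to a finitely punctured sphere'') is handled in the paper by the algebraicity of $F^{-1}(l)$, as you anticipate.
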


Due to the polynomial nature of the map, the hypothesis is equivalent to asking that the pre-image of every    complex line through $q$ be a connected rational curve, that is, $F^{-1}(l)$  should be conformal to $\mathbb C\mathbb P^1$ with finitely many points  removed. This was the original formulation in \cite{NX1}.

To see this how Theorem \ref{Jac}   can be recovered from  Theorem \ref{geral}, assume that the pre-image of every complex line  $\pi$ containing $0$,   under a polynomial local biholomorphism $F:\mathbb C^n \to \mathbb C^n$, $n>1$,  is a connected rational curve (there is no loss of generality in assuming that $0$ is a generic point in the image of $F$).

Endowing $\mathbb C^n$ with the flat metric arising from the identification of  $\mathbb C^n$ with the Euclidean space $\mathbb R^{2n}$, any non-singular complex curve
$C \subset \mathbb C^n$ will seemingly have two complex structures, the one inherited from the complex manifold $\mathbb C^n$,   and a second one inherited from  the Riemannian metric induced on $C$ by the standard Euclidean metric on $\mathbb C^n$. Using the Cauchy-Riemann equations and the fact that multiplication by $i$ induces an isometry in $\mathbb C^n$ one can check that these two complex structures coincide.

The complex lines through $q=0 \in F(\mathbb C^n)$ that are contained in a fixed $V\subset \mathbb C^n$, $V\approx \mathbb C^2$,  are  the fibers of the tautological line bundle $T$ over $\mathbb C \mathbb P^1$.  Since $F^{-1}(l)$ is assumed to be connected for every complex line $l\subset V$, and $z\in F^{-1}(l)$  whenever  $F(z)=0$,  the manifold $N=F^{-1}(V)$ is likewise connected. Alternatively, $T$ can be viewed as a real bundle of rank two over $S^2$, so under the hypothesis of Theorem \ref{Jac}, condition i) in Theorem \ref{geral} holds.
Since the Euler number of the tautological bundle  is $-1$, hence non-zero, every continuous section of $T$ must vanish somewhere, so that hypothesis ii)  
also holds.   Therefore, it follows from Theorem \ref{geral} that $\#F^{-1}(0)=1$ or $2$.

Fortunately,  for algebraic reasons  the generic fiber of a polynomial local biholomorphism cannot be $2$ (see Theorem 2.1 of \cite{BCW}, specifically  the equivalence between statements a) and g)).
Hence, $\#F^{-1}(0)=1$, and since $0$  is a generic point $F$  must be injective.  

For  polynomial local biholomorphisms  injectivity implies surjectivity, and so the present  proof of Theorem \ref{Jac} is complete.

In the next section, we will prove a new theorem, similar in spirit to Theorem \ref{Jac}, but involving polynomial  local biholomorphisms with \it real \rm coefficients.

\section {The condenser method in the presence of an involution}

\vskip10pt
In the discussion at the beginning of  section 7, if  we had  assumed that the pre-image of  a plane $\pi$  was conformal to a connected finitely punctured  surface of \it positive \rm  genus, the  Dirichlet problem  would still have a unique bounded solution $u_{\pi}$. 

However, since the genus is positive,  $\nabla u_{\pi}$  would necessarily have zeros. To see this, observe that after filling the punctures conformally the resulting  manifold would be obtained form a compact Riemann surface of positive genus by the removal of  two disjoint small discs. If the gradient of the solution of the Dirichlet problem had no zeros, the  time $t$ map of the flow of the vector  field 
$$X(p)=\frac {\nabla u_{\pi}(p)}{|\nabla u_{\pi}(p)|^2},$$
where the norm comes from any Riemannian metric on the conformal compactification of $F^{-1}(\pi)$,  applies any level set of $u_{\pi}$ onto another level set. Hence,  for all all small $\epsilon>0$ the flow would take  $\{u_{\pi}=\epsilon\}$ onto $\{u_{\pi}=1-\epsilon\}$, and so the compact surface with boundary would have been a topological cylinder $S^1\times [0,1]$, implying that 
the compactification of  $F^{-1}(\pi)$  had genus zero.

Observe that  the zero set of  $\nabla u_{\pi}$ is  discrete. Indeed, locally $u_{\pi}$ is the real part of a holomorphic function $h$ and,  by  the Cauchy-Riemann equations in local coordinates,  $\nabla u_{\pi}$ can be identified with $\overline{h'}$,  with $h$ holomorphic.  Since    $u_{\pi}$ is non-constant, the zeros of $h'$ are discrete, as desired.   

Again referring to section 7, as   the point $p_3$  lies in every $F^{-1}(\pi)$   one cannot  guarantee \it a priori \rm that  $\nabla u_{\pi}(p_3)\neq 0$  as one varies $\pi$ inside of a family of planes. In particular,  if the genus of $F^{-1}(\pi)$ is positive it  cannot be asserted  that   the (continuous) vector field defined in (\ref{secao}) is nowhere zero.

Nevertheless,  as we shall see below, one can push  further  the conceptual connection between the Jacobian conjecture  and the  condenser method. In order to explain this,
let us  take $n=2$ and consider   a  polynomial local biholomorphism $F:\mathbb C^2 \to \mathbb C^2$. If $F$ were to be a counterexample to (JC), the generic point would be covered at least three times 
\cite[Theorem 2.1, equivalence a) $\Longleftrightarrow$  g)]{BCW}.

For notational simplicity, assume that $0$ is a generic point in the image of  $F$.
It is known that there are finitely many complex lines through $0$, say $l_1, \dots, l_k$, with the property  that the pre-image of every complex  line $l\neq l_j$    is a \it connected \rm  finitely punctured  compact Riemann surface of some fixed genus $g\geq 0$.

As remarked above,  in the absence of injectivity the value $0$ would be  covered three or more times.  One can then use the condenser construction of section 7  to create  a  section
\begin{eqnarray*} l \to s(l)=DF(p_3)\nabla u_{l}(p_3) \in l, \end{eqnarray*}
of the restriction of the tautological line bundle to   the complement in $\mathbb C \mathbb P^1$ of the set $\Sigma=\{l_1, \dots, l_k\}$ of exceptional lines. The analytic arguments from section 6 show that $s$ is continuous.

Multiplying $s$ by a smooth  function  on $\mathbb C\mathbb P^1$ that vanishes of a sufficiently high order at $\Sigma$, but is otherwise positive, one obtains  a continuous section $\hat s$  of  the actual tautological line  bundle.

The zeros  of $\hat s$ would occur at $\Sigma$, whenever  this set is non-empty, as well as at the possible zeros of the original section $s$ constructed via  the  condenser method   that might arise if the genus of the generic pre-image is positive, as explained before.

Assuming that  $s$ has  only finitely many zeros,  the sum of the local indices of the singularities (zeros)  of $\hat s$ would be $-1=$ the Euler number of the tautological bundle  
(see \cite[p.133]{H} for a  general version of the Poincar\'e-Hopf theorem). 

Progress in (JC) could conceivably be achieved  if one had enough  information about  these local indices,   so as to conclude that their sum could not be $-1$. 

This contradiction would ensure that the pre-image of the generic point $0$ has at most two elements (recall that  the condenser construction requires at least three points in the pre-image).  
But, as observed before \cite[Thm.2.1]{BCW},  the cardinality of the generic fiber  cannot be two, for algebraic reasons, and  so the polynomial local biholomorphism would be injective.

For $n=2$ the proof of Theorem \ref{Jac}  follows   precisely the  outline given above,  when the scenario  is  the simplest one possible.
Indeed, under the hypotheses of Theorem \ref{Jac}
the pre-images of {\it all}  complex lines are  connected, so the exceptional lines $l_j$  would be absent from the  above discussion.  
Also, since the pre-image of each  complex  line is  supposed to be conformally a finitely punctured $\mathbb C \mathbb P^1$, the gradient of the harmonic function in the condenser construction
has no zeros (see the discussion preceding (\ref{condenser})).
Hence,  $\hat s$ would  be zero-free, an outright contradiction to the fact that the Euler number of the tautological bundle is non-zero.

In the absence of a technique for computing  these local indices, we explore  below a different line of argument, familiar in  topological problems that can be approached  using degree theory,    where ``symmetry" and ``parity" replace the need to actually compute or estimate the local   indices.

Specifically,  since the Euler number of the tautological line bundle is $-1$, a contradiction also  ensues in the preceding discussion if  $\hat s$  is known to have  an even number of  singularities, grouped in pairs,  whose  indices are  either pairwise equal or differ by a sign. Indeed, the sum of the indices would be zero mod (2). The preceding   discussion  provides   the  heuristics for  the   main result of this section.

It will be observed in Lemma \ref{outside Sigma} that if $F:\mathbb C^n \to \mathbb C^n$, $n\geq 2$, is a polynomial local biholomorphism with $\underline{\rm real}$  coefficients, and $\Sigma \subset \mathbb C^n$ is any complex algebraic hypersurface, then $F(\mathbb R^n)$  is not contained in $\Sigma$. This allows us to make sense of the expression ``a generic point in $F(\mathbb R^n)$", which is part of the hypotheses of the theorem below.

\begin{thm} \label {newJac}  A  polynomial local biholomorphism $F:\mathbb C^n \to \mathbb C^n$  that commutes with conjugation    is invertible if and only if  for  the generic point $q\in F(\mathbb R^n)\subset \mathbb R^n$ the following  holds: For every complex line $l$ containing $q$    that is invariant  under   conjugation  there is  an open  connected  set $\mathcal O_l\subset \mathbb R^2$ such that   $F^{-1}(l)$ is homeomorphic to   $\mathcal O_l$.
\end{thm}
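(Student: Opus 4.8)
The ``only if'' direction is immediate: if $F$ is invertible, then for every complex line $l$ the set $F^{-1}(l)$ is homeomorphic to $l\cong\mathbb R^2$, so the stated condition holds for every $q$ and every conjugation-invariant $l$ (take $\mathcal O_l=\mathbb R^2$).

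For the converse I would argue by contradiction, running the condenser construction of Section~7 over the projective space of conjugation-invariant lines and using the involution to force the resulting section into a \emph{nontrivial} real line bundle. Write $\tau(z)=\bar z$. Commuting with $\tau$ means $F$ has real coefficients, so $F(\mathbb R^n)\subseteq\mathbb R^n$; by the observation preceding the theorem (Lemma~\ref{outside Sigma}), $F(\mathbb R^n)$ lies in no complex algebraic hypersurface, so a generic $q\in F(\mathbb R^n)$ may be taken to lie in $\mathbb R^n$, to satisfy the hypothesis, and to have $\#F^{-1}(q)=\deg F$. After translating by a real vector, assume $q=0$, and suppose $F$ is not invertible, so $\deg F\ge2$. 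Since the generic fibre of a polynomial local biholomorphism cannot have exactly two points (see \cite[Thm.~2.1]{BCW}), we get $\#F^{-1}(0)\ge3$. The finite set $F^{-1}(0)$ is $\tau$-invariant and contains a real point (because $0\in F(\mathbb R^n)$). Inspecting its $\tau$-orbit decomposition into real fixed points and conjugate pairs, one can therefore choose three distinct points $p_1,p_2,p_3\in F^{-1}(0)$ with $p_3\in\mathbb R^n$ and with $\{p_1,p_2\}$ \emph{$\tau$-coherent}, meaning either both real or interchanged by $\tau$; the only configuration forbidding such a choice, ``exactly two real points and no conjugate pair'', forces $\#F^{-1}(0)=2$ and is excluded.

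Now perform the condenser construction of Section~7 with these three points over the space of conjugation-invariant complex lines through $0$. Such a line is $\mathbb C w$ for some $w\in\mathbb R^n$, so this space is canonically $\mathbb R\mathbb P^{n-1}$, and for each such $l$ the curve $F^{-1}(l)\subset\mathbb C^n$ is smooth and connected (connected since by hypothesis it is homeomorphic to a connected domain in $\mathbb R^2$) and contains $p_1,p_2,p_3$. Let $u_l$ be the unique bounded harmonic condenser function on $F^{-1}(l)$ with two small plates around $p_1,p_2$ and boundary values $0$ and $1$, the plates chosen $\tau$-symmetrically (a $\tau$-invariant plate around each real $p_i$, or one plate the $\tau$-image of the other when $\tau(p_1)=p_2$). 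Since $F^{-1}(l)$ is homeomorphic to an open subset of $\mathbb R^2$ and is a smooth affine curve, it has genus zero (a surface of positive genus with finitely many punctures does not embed in the plane), hence is conformally $\mathbb C\mathbb P^1$ with finitely many punctures; the punctures are removable for the bounded $u_l$, so after filling them $u_l$ lives on a nondegenerate annulus and is the explicit function (\ref{condenser}), whose gradient never vanishes. Thus $\nabla u_l(p_3)\neq0$, and $l\mapsto s(l)=DF(p_3)\nabla u_l(p_3)\in l$ is, by the elliptic estimates, compactness, and uniqueness arguments of Sections~6 and~7, a \emph{continuous, nowhere-vanishing} section over $\mathbb R\mathbb P^{n-1}$. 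Finally, uniqueness of $u_l$ gives $u_l\circ\tau=u_l$ when $p_1,p_2$ are real and $u_l\circ\tau=1-u_l$ when $\tau(p_1)=p_2$; differentiating at the $\tau$-fixed point $p_3$ and using $DF(p_3)\,d\tau_{p_3}=d\tau_0\,DF(p_3)$ shows that $s(l)$ lies in $l_{\mathbb R}:=l\cap\mathbb R^n$ in the first case and in $i\,l_{\mathbb R}$ in the second. Either way $s$ is a continuous nowhere-zero section of a real line bundle over $\mathbb R\mathbb P^{n-1}$ isomorphic to the tautological bundle $\gamma$ (multiplication by $i$ identifies $\{i\,l_{\mathbb R}\}$ with $\{l_{\mathbb R}\}=\gamma$), which is impossible because $w_1(\gamma)\neq0$. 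Hence $\#F^{-1}(0)\le2$; since this number is $\deg F$ and cannot equal $2$, it equals $1$, so $F$ is injective, hence invertible.

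The step I expect to be the main obstacle is the symmetry bookkeeping, not the analysis: one must check that the plates can be chosen $\tau$-coherently \emph{simultaneously} over all conjugation-invariant lines, verify the claimed equivariance of $u_l$ and deduce that $s$ takes values in $\{l_{\mathbb R}\}$ (resp. $\{i\,l_{\mathbb R}\}$), and identify this family of real lines with the tautological bundle over $\mathbb R\mathbb P^{n-1}$. The continuity of $s$ merely repeats the arguments of Sections~6 and~7, and the topological input is simply the nontriviality of the tautological line bundle over real projective space.
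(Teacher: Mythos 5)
Your proposal is correct and follows essentially the same route as the paper: Lemma~\ref{outside Sigma} to locate a generic real base point, the $\tau$-coherent choice of $p_1,p_2$ (the paper's condition ($\bullet$)), condenser functions with conjugation-symmetric plates (Lemma~\ref{primeiro}), the equivariance $u_l\circ\tau=u_l$ or $1-u_l$, and the resulting continuous nowhere-zero section of the real tautological bundle with values in $l\cap\mathbb R^n$ (resp.\ $i(l\cap\mathbb R^n)$). The only cosmetic difference is the final topological step: you work over all of $\mathbb R\mathbb P^{n-1}$ and invoke $w_1(\gamma)\neq 0$, whereas the paper slices to a $\mathbb C^2\subset\mathbb C^n$ and proves the corresponding fact over $\mathbb R\mathbb P^1$ directly (Lemma~\ref{conjugation twice}), doubling the section by reflection so that its zeros pair off with equal indices and the index sum is $0$ modulo $2$, contradicting the Euler number $-1$ of the complex tautological bundle --- an equivalent obstruction.
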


 Here,   it is meant that the complex line $l\subset \mathbb C^n$  is invariant  under   conjugation as a set,  and not in the pointwise sense.  In $\mathbb C^2$ such a line is   the zero set of a linear equation with real coefficients. Commutation of $F$ with conjugation follows if the components of $F$ are polynomials with real coefficients coefficients.

 Since $F$ is a polynomial map, the topological assumption in the statement of the theorem implies that $\mathcal O_l$ is conformal to $\mathbb C \mathbb P^1$ punctured a finite number $n_l$ of times, i.e. $F^{-1}(l)$ is a connected rational curve.

\begin{rem} \rm Theorem \ref{newJac} invites comparison with  Theorem \ref{Jac}. In the latter,  injectivity is achieved if the pre-images of  all    complex lines $l$  through a generic point of $F(\mathbb C^n)$ are homeomorphic to   planar domains. 
In Theorem \ref{newJac}, where the coefficients of the map are real, accordingly,  in order to obtain injectivity one only needs  that the pre-images of  all  complex lines given by linear equations with real coefficients, and passing through a generic point in $F(\mathbb R^n)$,  be homeomorphic to planar domains. 

One should stress  that the hypotheses of Theorem \ref{newJac} impose no restrictions  on the topology of the pre-images of those complex lines that are not  invariant  under complex conjugation.

\end{rem}

\begin{rem} \rm  As observed by several authors, the Jacobian conjecture holds  if and only if injectivity can be established  in all dimensions for those   polynomial local biholomorphisms of 
 $\mathbb C^n$ into itself that have $\underline {\rm real} $ coefficients.

Indeed, 
if $F:\mathbb C^n \to \mathbb C^n$ is polynomial and $\text{det} DF=1$,  its  realification   $\widehat F:\mathbb R^{2n} \to \mathbb R^{2n}$   satisfies
$$ \text{det} D\widehat F(p)=|\text{det} DF(p)|^2=1.$$
(More generally, this relation holds for all holomorphic maps $F:\mathbb C^n \to \mathbb C^n$.) The  complexification $\widetilde F:\mathbb C^{2n}\to \mathbb C^{2n}$ of $\widehat F$ has real coefficients and
 satisfies $\text{det}D\widetilde F=1$. Clearly, if $\widetilde F$ is injective so  are, in succession,  $\widehat F$ and  $F$. 
 
Note, however, that  Theorems \ref{Jac} and \ref{newJac} are logically independent invertibility criteria.

\end{rem}

Before commencing  the proof of Theorem \ref{newJac},  we make some elementary observations that will also help us  fix the  notation. If $\mathbb C^2$  is given coordinates $(z,w)$, the complex lines through $0$ are of the form $z=0$ and $w=\alpha z$, with $\alpha \in \mathbb C$, and so they are  identified with the points in $\mathbb C \mathbb P^1$ given in homogeneous coordinates by $(0:1)$ and $(1:\alpha)$, $\alpha \in \mathbb C$. 

The tautological line bundle $T$, when restricted to the affine part of $\mathbb C \mathbb P^1$, associates to $\alpha \in \mathbb C$ the complex line given by $w=\alpha z$. 
Hence, $\xi(\alpha)=(1, \alpha)$, $\alpha \in \mathbb C$,  defines a section of the restriction $T|\mathbb C$.  The Euler number $e(T)$ of the tautological line bundle  is  the local index of $\xi$ at its singularity at infinity. 

Introducing a new coordinate $\zeta= 1/\alpha$, $|\zeta|\leq 1$, the index at infinity can be computed as the local index at $0$ of the section 
$\zeta \to (1, 1/\zeta)=(1, \overline {\zeta}/|\zeta|^2)$,  and so $e(T)=-1$.

Complex conjugation  acts not only on $\mathbb C^2$,  but  on $\mathbb C \mathbb P^1$ as well, via 
$\overline {(z:w)}=(\overline {z} : \overline {w})$. 
The real projective line $\mathbb R \mathbb P^1$ can be identified with  the maximal subset  of $\mathbb C \mathbb P^1$ that is left pointwise invariant under conjugation, namely
$$ \mathbb R \mathbb P^1\cong \{(1:x) \;|\; x \in \mathbb R\} \cup \{(0:1)\}.$$
Indeed, if $\overline{(z:w)}=(z:w)$, with $z\neq 0$, one has
$ \frac{w}{z}=\frac{\overline w}{\overline z}=\overline{(\frac{w}{z})}=x\in \mathbb R.$
In particular,  $(z:w)=(z:x z)=(1: x).$

Identifying $\mathbb C \mathbb P^1$ with the unit  sphere $S^2$ given by $x^2+y^2+t^2=1$  in $\mathbb R^3$ endowed with coordinates $(x, y, t)$,  $z=x+iy$, and then using stereographic projection from the north pole $(0,0, 1)$,  one sees that $\mathbb R \mathbb P^1$ can be identified with the intersection of $S^2$  and  the plane  $y=0$. 
Hence, 
$\mathbb C \mathbb P^1\setminus \mathbb R \mathbb P^1= D^{+} \cup D^{-},$
where  $D^{+}$ and $D^{-}$ are disjoint topological  discs, intertwined  under conjugation,  corresponding in the affine part of $\mathbb C\mathbb P^1$ to  the sets $\{z\in \mathbb C \;| \text{Im} z >0\}$ and  $\{z \in \mathbb C \; | \text{Im} z <0\}$.

Besides the ordinary \it complex  \rm tautological complex  line bundle $T$ over $\mathbb C \mathbb P^1$, one has in a  natural way a tautological \it  real \rm  line bundle over $\mathbb R \mathbb P^1$. 
Namely, one associates to $(0:1)$ and $(1:\beta)$, where $\beta\in \mathbb R$, the lines in $\mathbb R^2$ given by $x=0$ and $y=\beta x$.  
We denote  this real line bundle   by $T_{\mathbb R}$. 
Abusing the notation somewhat,  we bypass the  canonical projection and,  for a given complex line $l$ in $\mathbb C^2$ that passes  through $0$, we write both 
$l\subset \mathbb C^2$ and  $l\in \mathbb C \mathbb P^1$. 
Let $T_0$ be the restriction of $T$ to $\mathbb R \mathbb P^1$.  

It is clear that a section $\xi_0$ of $T_0$ that satisfies $\xi_0(l)=\overline{\xi_0(l)}$ for every $l\in \mathbb R \mathbb P^1$ induces a section of $T_{\mathbb R}$,  and vice versa. Since $T_{\mathbb R}$ is a real line bundle over $\mathbb R \mathbb P^1\approx S^1$, it is isomorphic to either the trivial bundle or the   M\"obius bundle. It follows from the lemma below that the second alternative holds.

\begin{lem}  \label{conjugation twice} Every continuous section of $T_{\mathbb  R}$ vanishes somewhere in $\mathbb R \mathbb P^1$.  Otherwise said, if   $\xi_0$  is    a continuous section of $T_0$  that satisfies 
$\xi_0(l)=\overline {\xi_0(l)}$ for all  $l\in \mathbb R \mathbb P^1$, then 
 $\xi_0$ must have a zero on $\mathbb R \mathbb P^1$.
 \end{lem}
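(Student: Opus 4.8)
The plan is to identify $T_{\mathbb R}$ with the Möbius bundle over $\mathbb R\mathbb P^1\approx S^1$ and then observe that such a bundle admits no nowhere-zero section; the translation between the two formulations in the statement is immediate from the remarks preceding the lemma, so I will treat the bundle-theoretic statement as the heart of the matter.

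First I would set up an explicit global description of $T_{\mathbb R}$. Parametrize the real lines through the origin in $\mathbb R^2$ by the angle $\theta\in[0,\pi]$, with $\ell_\theta$ the line spanned by the unit vector $e(\theta)=(\cos\theta,\sin\theta)$; this realizes $\mathbb R\mathbb P^1$ as the quotient of $[0,\pi]$ obtained by gluing $0$ to $\pi$, since $\ell_0=\ell_\pi$ is the $x$-axis. A continuous section $s$ of $T_{\mathbb R}$ is then exactly a continuous function $f:[0,\pi]\to\mathbb R$ with $s(\ell_\theta)=f(\theta)\,e(\theta)$, subject to the single compatibility condition forced by the gluing $\theta=0\sim\theta=\pi$: because $e(0)=(1,0)$ while $e(\pi)=(-1,0)=-e(0)$, well-definedness of $s$ at the glued point requires $f(0)\,e(0)=f(\pi)\,e(\pi)$, i.e. $f(0)=-f(\pi)$. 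Keeping careful track of this sign discrepancy between $e(0)$ and $e(\pi)$ — the only point in the argument that really demands attention — is precisely what exhibits $T_{\mathbb R}$ as nontrivial.

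Next, suppose for contradiction that a continuous section $s$ is nowhere zero, so $f(\theta)\neq 0$ for every $\theta\in[0,\pi]$. Since $f$ is continuous on the connected interval $[0,\pi]$ and never vanishes, it has constant sign; hence $f(0)$ and $f(\pi)$ are both positive or both negative, which is incompatible with $f(0)=-f(\pi)\neq 0$. This contradiction shows that every continuous section of $T_{\mathbb R}$ must vanish at some point of $\mathbb R\mathbb P^1$. (Equivalently, a nowhere-zero section, after normalization by its absolute value, would furnish an isomorphism of $T_{\mathbb R}$ with the trivial bundle $\mathbb R\mathbb P^1\times\mathbb R$, contradicting the monodromy computation above, so $T_{\mathbb R}$ is the Möbius bundle.)

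Finally I would translate back to the stated formulation. Given a continuous section $\xi_0$ of $T_0=T|_{\mathbb R\mathbb P^1}$ with $\xi_0(l)=\overline{\xi_0(l)}$ for all $l\in\mathbb R\mathbb P^1$, the value $\xi_0(l)$ lies in the complex line $l\subset\mathbb C^2$ and is fixed by complex conjugation, hence lies in the real subspace $\mathbb R^2\subset\mathbb C^2$; thus $\xi_0(l)\in l\cap\mathbb R^2$, which is exactly the real line serving as the fiber of $T_{\mathbb R}$ over $l$. Therefore $\xi_0$ is the complexification of a continuous section of $T_{\mathbb R}$, and by the previous paragraph that section, hence $\xi_0$, vanishes somewhere on $\mathbb R\mathbb P^1$. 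I do not anticipate any genuine obstacle beyond the bookkeeping of the identification $\ell_0=\ell_\pi$ together with $e(0)=-e(\pi)$.
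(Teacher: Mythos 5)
Your proof is correct, but it takes a genuinely different and more elementary route than the paper's. You prove directly that $T_{\mathbb R}$ is the M\"obius bundle by computing its monodromy in the explicit parametrization $\ell_\theta=\mathbb R\,e(\theta)$, $e(\theta)=(\cos\theta,\sin\theta)$, $\theta\in[0,\pi]$ with $0\sim\pi$: a continuous section is a continuous $f:[0,\pi]\to\mathbb R$ with $f(0)=-f(\pi)$ forced by $e(\pi)=-e(0)$, and the intermediate value theorem finishes the job. (Your reduction of the conjugation-invariant formulation to the real-bundle formulation, via $\xi_0(l)\in l\cap\mathbb R^2$, is exactly the identification the paper sets up in the remarks preceding the lemma, so that part is shared.) The paper instead argues globally on $\mathbb C\mathbb P^1$: assuming $\xi_0$ is nowhere zero on $\mathbb R\mathbb P^1$, it extends $\xi_0$ to a section $\xi_0^{+}$ of $T|D^{+}$ with finitely many zeros, reflects it to $D^{-}$ by $\xi_0^{-}(l)=\overline{\xi_0^{+}(\overline l)}$, and observes that the zeros of the resulting continuous section of $T$ come in conjugate pairs with equal local indices, so the index sum is even --- contradicting $e(T)=-1$. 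Your argument is shorter, self-contained, and avoids the (mild but nontrivial) step of extending a boundary section to the disc with only finitely many transverse zeros. The paper's argument, on the other hand, is the prototype for the ``symmetry and parity'' strategy it advocates in section 9 for attacking the Jacobian conjecture when the indices of the zeros cannot be computed individually, which is why it is phrased through the Euler number of the complex tautological bundle rather than through the M\"obius bundle directly. Both proofs are valid; yours buys economy, the paper's buys continuity with the surrounding method.
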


To prove this lemma, suppose  by contradiction that $\xi_0$ vanishes nowhere on $\mathbb R \mathbb P^1$. 
One can  extend  $\xi_0$ continuously to a section $\xi_0^{+}$ of the restriction  $T|D^{+}$,  smooth outside a half-collar of 
$\mathbb R \mathbb P^1$ in $D^{+}$, and which,  by transversality, has  only finitely many zeros on $D^{+}$. 
Having obtained $\xi_0^{+}$, we now define a section $\xi_0^{-}$  on $D^{-}$ by ``reflection", setting
\begin{eqnarray}  \label{reflection} \xi_0^{-}(l)=\overline {\xi_0^{+}(\overline l)}, \;\; l\in D^{-}.\end{eqnarray}
Consider the  section $\xi$ of $T$ defined to be  $\xi_0^{+}$  on $D^{+}$, $\xi_0^{-}$  on $D^{-}$, and $\xi_0$ on $\mathbb R\mathbb P^1$.
Since $\mathbb R \mathbb P^1$ is kept pointwise  fixed under conjugation,  the hypothesis of the lemma and (\ref{reflection}) imply that $\xi$ is continuous. Since we are assuming   that $\xi_0$   has no zeros on $\mathbb R\mathbb P^1$, the zeros of 
$\xi$, if any, must  lie in $D^{+}\cup D^{-}$  and  occur in pairs, one in $D^{+}$ and the other in $D^{-}$.  
One can check using  (\ref{reflection}) that     the   local indices  are pairwise the same.  

To see this, denote by $\Lambda$ the action of complex conjugation on $\mathbb C \mathbb P^1$, so that $\Lambda (l)=\overline l$.
One can rewrite (\ref{reflection}) as 
\begin{eqnarray} \label {rewriting} \xi_0^{-}=\Lambda \circ \xi_0^{+}\circ \Lambda. \end{eqnarray}
If $l_0$ is a zero of $\xi_0^{-}$ then $\Lambda (l_0)$ is a zero of $\xi_0^{+}$, and vice versa. 
Keeping in  mind that in the natural local coordinates $\Lambda$ is a real linear involution, so that $\Lambda^{-1}=\Lambda$ and $d\Lambda(l_o)=\Lambda$, 
one can rewrite  (\ref{rewriting}) as
\begin{eqnarray} \xi_0^{-}(l)=\Lambda(\xi_0^{+}(\Lambda (l)))=\big[d\Lambda(\l)\big ]^{-1}(\xi_0^{+}(\Lambda (l)).
\end{eqnarray}
The last relation displays  the section $\xi_0^{-}$  as  the pull-back  of $\xi_0^{+}$  under $\Lambda$. In particular,
\begin{eqnarray} \text{Ind} (\xi_0^{-}, l_0)= \text {Ind}(\xi_0^{+}, \Lambda (l_0)).\end{eqnarray}

Hence, as claimed, the zeros of $\xi$ occur in pairs,  and the zeros  in a given pair have   the same local index. 
It follows that the sum of all local indices is $0$ mod (2),  contradicting the fact that  the Euler  number of $T$ is $-1$. This concludes the proof of the lemma.

\vskip10pt

To prove Theorem \ref{newJac} we will argue by contradiction, using  the solutions of the Dirichlet problems  from section 6 in order to create a  section $\xi_0$ of $T_{\mathbb R}$ that violates Lemma \ref{conjugation twice}. But before we can achieve this, we need to prove one more lemma. For the sake of completeness, we include its proof.

\vskip10pt
\begin{lem} \label {outside Sigma} Let $F:\mathbb C^n \to \mathbb C^n$ be a polynomial local biholomorphism with real coefficients, $n\geq2$,  and $\Sigma$ a complex hypersurface. 
Then $F(\mathbb R^n) \cap [\mathbb R^n \setminus \Sigma]\neq \emptyset$. \end{lem}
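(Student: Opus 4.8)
The plan is to reduce the claim to a statement about real-algebraic sets and then argue by dimension. First I would note that since $F$ is a local biholomorphism, its complex Jacobian determinant $\det DF$ is a nowhere-vanishing polynomial on $\mathbb{C}^n$; in particular $\det DF$ is a nonzero constant (a nowhere-vanishing polynomial on $\mathbb{C}^n$ must be constant, since its zero set is either empty or a hypersurface of complex codimension one). Hence $\det DF = c \neq 0$. Because $F$ has real coefficients, the restriction $F|_{\mathbb{R}^n}:\mathbb{R}^n \to \mathbb{R}^n$ is a real-analytic (indeed polynomial) map with $\det D(F|_{\mathbb{R}^n})(x) = \det DF(x) = c \neq 0$ for every $x \in \mathbb{R}^n$, so $F|_{\mathbb{R}^n}$ is a local diffeomorphism of $\mathbb{R}^n$. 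Consequently $F(\mathbb{R}^n)$ is an open subset of $\mathbb{R}^n$; being nonempty, it has positive Lebesgue measure, in fact nonempty interior.

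Next I would analyze $\Sigma \cap \mathbb{R}^n$. Write $\Sigma = \{P = 0\}$ for a nonzero complex polynomial $P$ on $\mathbb{C}^n$. The set $\mathbb{R}^n \cap \Sigma$ is the common zero locus of the two real polynomials $\operatorname{Re} P$ and $\operatorname{Im} P$ (restricted to $\mathbb{R}^n$). I claim this real-algebraic set is a proper subset of $\mathbb{R}^n$, hence nowhere dense with empty interior: if $\operatorname{Re} P$ and $\operatorname{Im} P$ both vanished identically on $\mathbb{R}^n$, then $P$ would vanish on $\mathbb{R}^n$, and since $\mathbb{R}^n$ is Zariski-dense in $\mathbb{C}^n$ (a polynomial vanishing on $\mathbb{R}^n$ vanishes on all of $\mathbb{C}^n$), this would force $P \equiv 0$, contradicting $\Sigma$ being a hypersurface. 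Therefore at least one of $\operatorname{Re} P$, $\operatorname{Im} P$ is a nonzero polynomial on $\mathbb{R}^n$, so its zero set — hence $\mathbb{R}^n \cap \Sigma$ — is a proper real-algebraic subset of $\mathbb{R}^n$, which is closed with empty interior (the complement of the zero set of a nonzero polynomial is open and dense).

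Combining the two observations finishes the proof: $F(\mathbb{R}^n)$ is a nonempty open subset of $\mathbb{R}^n$ while $\mathbb{R}^n \cap \Sigma$ is closed with empty interior, so $F(\mathbb{R}^n) \setminus \Sigma = F(\mathbb{R}^n) \cap (\mathbb{R}^n \setminus \Sigma)$ is nonempty (indeed open and dense in $F(\mathbb{R}^n)$). This establishes $F(\mathbb{R}^n) \cap [\mathbb{R}^n \setminus \Sigma] \neq \emptyset$, and in fact shows that a ``generic point of $F(\mathbb{R}^n)$'' makes sense: the good points form a dense open subset. I do not anticipate a serious obstacle here; the only point requiring mild care is the passage from ``$\det DF$ nowhere zero'' to ``$\det DF$ constant,'' which uses that the vanishing locus of a nonconstant polynomial on $\mathbb{C}^n$ is a nonempty hypersurface — a standard consequence of the Nullstellensatz — and the Zariski-density of $\mathbb{R}^n$ in $\mathbb{C}^n$, which is the elementary fact that a real polynomial identity can be checked on $\mathbb{R}^n$.
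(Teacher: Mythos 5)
Your proof is correct, and it takes a genuinely different route from the paper's. The paper argues by contradiction from $F(\mathbb R^n)\subset \Sigma$, i.e.\ $\mathbb R^n\subset F^{-1}(\Sigma)$, then stratifies the algebraic variety $F^{-1}(\Sigma)$ by successive singular loci, uses the Baire category theorem to locate a Euclidean ball of $\mathbb R^n$ inside a smooth stratum $\Sigma_j$, and derives the contradiction $n=\dim_{\mathbb C}\Sigma_j\le n-1$ from the fact that the complex tangent space at a point of that stratum contains $\mathbb R^n$ and hence, being $i$-invariant, all of $\mathbb C^n=\mathbb R^n+i\mathbb R^n$. You instead work entirely in the target: $F|_{\mathbb R^n}$ is an open map (its real Jacobian at a real point is $\det DF\neq 0$ since the coefficients are real), so $F(\mathbb R^n)$ is a nonempty open subset of $\mathbb R^n$, while $\mathbb R^n\cap\Sigma$ is the zero set of a not-identically-zero real polynomial (Zariski density of $\mathbb R^n$ in $\mathbb C^n$) and hence has empty interior; an open set cannot sit inside a nowhere dense one. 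Your version is shorter and avoids the singular stratification entirely, because you exploit $\Sigma$ being cut out by a single polynomial in the \emph{target} rather than analyzing the possibly singular preimage $F^{-1}(\Sigma)$; the only structural input it uses about $F$ is openness of the real restriction. The paper's tangent-space argument is heavier but encodes the more general principle that $\mathbb R^n$ cannot lie in any proper complex subvariety of $\mathbb C^n$, independently of how that subvariety is presented. Both arguments, as written, rely on $\Sigma$ being algebraic (yours to write $\Sigma=\{P=0\}$ for a polynomial $P$, the paper's to get a finite chain of singular loci), which matches the context in which the lemma is applied.
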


\begin {proof} Assume by contradiction that  $F(\mathbb R^n) \subset \Sigma$, so that
\begin{eqnarray} \label {inclusion} \mathbb R^n \subset \Sigma_1\stackrel{def}=F^{-1}(\Sigma). \end{eqnarray}

Let $\Sigma_2$ be the singular set of the complex algebraic variety  $\Sigma_1$, $\Sigma_3$ the singular set of $\Sigma_2$, and so on. One then has a descending chain of inclusions
\begin{eqnarray} \label {chain} \Sigma_1\supset \Sigma_2\supset \Sigma_3 \supset \cdots \supset \Sigma_{r-1}\supset \Sigma_r, \end{eqnarray}
where $r\geq 1$ and $\Sigma_r$ is  smooth and closed  in $\mathbb C^n$. 
From $(\ref{inclusion})$ and $(\ref{chain})$ we can write
\begin{eqnarray} \mathbb R^n=\mathbb R^n \cap \Sigma_1=\big[ \mathbb R^n\cap (\Sigma_1\setminus \Sigma_2)\big]  
\cup \cdots \cup \big[ \mathbb R^n\cap (\Sigma_{r-1}\setminus \Sigma_r)\big] 
\cup\big[ \mathbb R^n\cap \Sigma_r\big ].
\end{eqnarray}
By the Baire category theorem, the closure in $\mathbb R^n$ of at least one of the sets in the brackets on the right hand side  of the above decomposition has non-empty interior in $\mathbb R^n$. 

We distinguish between two cases (notation: the symbol $\text{int}$ refers to the operation of taking the interior of a set in $\mathbb R^n$, whereas the bar stands for the operation of taking closure in $\mathbb R^n$):
\vskip10pt
\noindent a) $\text{int}\overline { (\mathbb R^n\cap \Sigma_r])} \neq \emptyset$.
\vskip10pt
\noindent b) $\text{int} \overline {\big[ \mathbb R^n\cap (\Sigma_{i}\setminus \Sigma_{i+1})\big]} \neq \emptyset$, for some $i=1, \dots, r-1$.

\vskip10pt
\noindent We treat  b) first. Choose $j$ to be the largest index $\leq r-1$ for which

$$\text{int} \overline {\big[ \mathbb R^n\cap (\Sigma_{j}\setminus \Sigma_{j+1})\big]} \neq \emptyset.$$
Since $\mathbb R^n$, $\Sigma_j$ are both closed in $\mathbb C^n$, and $\Sigma_{j+1}$ has empty interior in $\Sigma_j$, the last condition and the maximality of $j$ imply
\begin{eqnarray} \label {minimum j} \text{int} [\mathbb R^n\cap\Sigma_{j}] \neq \emptyset, \;\;\;\; \text{int} [\mathbb R^n\cap\Sigma_{j+1}] = \emptyset. \end{eqnarray}
By the first half of (\ref{minimum j}) there exist $z\in \mathbb R^n\cap\Sigma_{j}$ and $\epsilon>0$  such that the open ball $B(z, \epsilon)$ in $\mathbb R^n$ satisfies   
$B(z,\epsilon)\subset \Sigma_{j}.$

If  $z\in\Sigma_{j}\setminus \Sigma_{j+1}$, set $z'=z$ and choose $\epsilon' $ such that $0<\epsilon'<\min\{ \epsilon, d(z', \Sigma_{j+1})\}$,  where $d$ 
stands for the Euclidean distance, so that $B(z', \epsilon')\subset \Sigma_{j}\setminus \Sigma_{j+1}$. 

If  $z\in \Sigma_{j+1}$,  then by  the second  half of (\ref{minimum j}) one has 
$\text{int} [B(z, \epsilon) \cap \Sigma_{j+1}]= \emptyset,$
in which case there exists 
$z'\in B(z, \epsilon) \cap (\Sigma_{j}\setminus \Sigma_{j+1})$ and some $\epsilon'>0$ such that $B(z', \epsilon')\subset \Sigma_{j}\setminus \Sigma_{j+1}$.

Thus, in both cases above the point $z'$ is the center of a ball in $\mathbb R^n$ that is fully contained in the regular set $\Sigma_{j}\setminus \Sigma_{j+1}$. 

Looking at tangent spaces, one has $\mathbb R^n \subset T_{\mathbb C, z'} (\Sigma_{j}).$  But $T_{\mathbb C, z'} (\Sigma_{j})$ is invariant under multiplication by $i$, so that 
$\mathbb C^n =\mathbb R^n +i\mathbb R^n \subset T_{\mathbb C, z'} (\Sigma_{j})$, implying that  
\begin {eqnarray*} n=  \text {dim} \Sigma_j\leq \text {dim} \Sigma_1=n-1, \end{eqnarray*} a contradiction. This completes the proof of Lemma \ref{outside Sigma} under alternative b). 

If alternative a) is in force one has $\text{int} (\Sigma_r)\neq \emptyset$,  and since $\Sigma_r$ is smooth  we also obtain a ball $B(z', \epsilon')$ in $\mathbb R^n$ that is fully contained in the non-singular variety $\Sigma_r$.  The  argument in the previous paragraph, involving tangent spaces,  leads to  $n=  \text {dim} \Sigma_r\leq \text {dim} \Sigma_1=n-1$, a contradiction. This concludes the proof of Lemma \ref{outside Sigma}.

\end{proof}

It is well known that under the hypotheses of the Jacobian Conjecture  there exists a complex algebraic hypersurface $\Sigma \subset \mathbb C^n$, perhaps with singularities, for which  the restriction 
$\mathbb C^n\setminus F^{-1}(\Sigma)\stackrel{F}{\to} \mathbb C^n\setminus \Sigma$
is a cover map.  

Continuing with the proof of Theorem \ref{newJac}, by Lemma \ref{outside Sigma} we can choose a  point $q$  in   $\mathbb R^n\setminus \Sigma$ of the form $q=F(p_3)$, where 
$\overline p_3=p_3$, i.e. $p_3\in \mathbb R^3$. 

Let $V=\mathbb C^2\subset \mathbb C^n$ (natural inclusion), and $V'=q+V$. If $l$ is a complex line through $q\in \mathbb R^n$ that is contained in $V'$ and is invariant under conjugation then, by hypothesis, $F^{-1}(l)$ is connected and contains $p_3$.  
In particular, $N=F^{-1}(V')$ is connected as well.  

Suppose, by way of contradiction, that $F$ is not injective. Since, as observed in section 7,  the generic fiber of $F$ must have  cardinality at least three,  and $q\notin \Sigma$, there are points $p_1, p_2  \in N$ such that $p_1$, $p_2$, $p_3$ are all distinct and 
$F(p_j)=q$, $j=1,2,3$. If $F(z)=q$, then $F(\overline z)=\overline{F(z)}=\overline{q}=q$.  Thus, one can assume that:

\vskip10pt
\noindent   ($\bullet$) \it Either both  $p_1$ and $p_2$  belong to $ \mathbb R^n$, or both $p_1$ and $p_2$  belong to $ \mathbb C^n \setminus \mathbb R^n$  and $\overline p_1=p_2$.\rm
\vskip10pt

Let $W$ be an  Euclidean  ball in $\mathbb C^n$, centered at  $q$,  and $U_1, U_2$ the corresponding disjoint neighborhoods of $p_1$ and $p_2$ that are mapped biholomorphically onto $W$.

The complex lines $l$ in $V'$ that contain $q$ can be naturally identified with $\mathbb C\mathbb P^1$.  Likewise, recalling that $q\in \mathbb R^n$, those complex lines in $V'$ that contain $q$ and  are invariant under conjugation are naturally identified with $\mathbb R\mathbb P^1\subset \mathbb C \mathbb P^1$. 

Given  $l\in \mathbb R \mathbb P^1$, write $U_1^l, U_2^l$ for the intersections of the neighborhoods  $U_1, U_2$ with $F^{-1}(l)$, 
and set $T_i^l=\partial U_i^l$ (the boundary is taken in $F^{-1}(l)$), $i=1,2$. 

Arguing as in section 6, there is a unique bounded harmonic function $u_l$ defined in the complement of the closure of $U_1^l \cup  U_2^l$ in the finitely punctured compact Riemann surface $F^{-1}(l)$ that extends continuously to $T_1^l\cup T_2^l$ and satisfies 
\begin{eqnarray*}u_l|T_1^l=0,\;\;\;u_l|T_2^l=1.\end{eqnarray*}

\noindent As observed in ($\bullet$), either $p_1,\;p_2\in \mathbb R^n$, or $p_1,\;p_2\in \mathbb C^n \setminus \mathbb R^n, \overline p_1=p_2$. We will show that

\begin{lem} \label {primeiro} If the radius of the Euclidean ball $W$ is sufficiently small and $l\in \mathbb R \mathbb P^1$,  then $F^{-1}(l)=\overline{ F^{-1}(l)}$. Furthermore,
\vskip5pt 
\noindent a) If $p_1, p_2 \in \mathbb R^n$, then \begin{eqnarray}  \label{conjugation} U_j^l=\overline {(U_j^l)}, \;\;\; T_j^l=\overline{(T_j^l)}, \;\; \; j=1,2.\end{eqnarray}
\vskip5pt
\noindent b) If  $p_1, p_2 \in \mathbb C^n \setminus \mathbb R^n$, $\overline p_1=p_2$, then  \begin{eqnarray} \label{another conjugation} U_1^l=\overline {(U_2^l)}, \;\;\; T_1^l=\overline{(T_2^l)}.\end{eqnarray}
\end{lem}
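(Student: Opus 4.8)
The plan is to exploit the fact that $F$ commutes with complex conjugation, so that $\overline{\cdot}$ maps fibers $F^{-1}(l)$ to fibers $F^{-1}(\overline l)$, together with the fact that the auxiliary data $W, U_1, U_2$ can be chosen compatibly with conjugation because the base points $p_1,p_2,p_3$ were arranged as in ($\bullet$). First I would record the basic identity: since $F(\overline z)=\overline{F(z)}$, for any set $S\subset\mathbb C^n$ one has $F^{-1}(\overline S)=\overline{F^{-1}(S)}$. Applying this to a conjugation-invariant complex line $l\in\mathbb R\mathbb P^1$ (so $\overline l=l$) immediately yields $F^{-1}(l)=\overline{F^{-1}(l)}$, which is the first assertion of the lemma. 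The remaining work is to propagate this symmetry to the little neighborhoods $U_j^l$ and their boundaries $T_j^l$.

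For part a), suppose $p_1,p_2\in\mathbb R^n$. I would first shrink the radius of $W$ so small that $W=\overline W$ (take $W$ centered at $q\in\mathbb R^n$; a Euclidean ball about a real center is automatically conjugation-invariant) and so that the sheets $U_1,U_2$ over $W$ are disjoint and mapped biholomorphically onto $W$. The key point: since $p_j\in\mathbb R^n$ is a fixed point of conjugation and conjugation is a homeomorphism of $\mathbb C^n$ carrying $F^{-1}(W)$ to itself (as $\overline W=W$), the connected component $U_j$ of $F^{-1}(W)$ containing $p_j$ is carried by conjugation to the component containing $\overline{p_j}=p_j$, i.e. to itself; hence $U_j=\overline{U_j}$. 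Intersecting with the invariant set $F^{-1}(l)$ gives $U_j^l=\overline{U_j^l}$, and taking boundaries (conjugation is a homeomorphism of the surface $F^{-1}(l)$, so it commutes with $\partial$) gives $T_j^l=\overline{T_j^l}$. This is precisely \eqref{conjugation}.

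For part b), suppose $p_1,p_2\in\mathbb C^n\setminus\mathbb R^n$ with $\overline{p_1}=p_2$. The same argument applies, except now conjugation carries the component $U_1$ of $F^{-1}(W)$ containing $p_1$ to the component containing $\overline{p_1}=p_2$, which is $U_2$ (for $W$ small enough that these are distinct components — this is where the smallness of the radius is genuinely used, since for large $W$ the two sheets could merge). Thus $U_1=\overline{U_2}$ and, taking boundaries in $F^{-1}(l)$, $T_1=\overline{T_2}$; intersecting with $F^{-1}(l)$ yields \eqref{another conjugation}. I do not anticipate a serious obstacle here — the lemma is essentially a bookkeeping consequence of the equivariance $F\circ\overline{\,\cdot\,}=\overline{\,\cdot\,}\circ F$ — but the one point requiring a little care, and the natural ``main obstacle,'' is justifying that $W$ can be chosen small enough that the labeling of sheets $U_1,U_2$ is stable under conjugation (i.e. that conjugation genuinely permutes $\{U_1,U_2\}$ rather than mixing them): this follows from the fact that $F$ restricted to a small enough saturated neighborhood of $\{p_1,p_2\}$ is a trivial covering of $W$, so its sheets are in bijection with the finite fiber $F^{-1}(q)\cap(U_1\cup U_2)=\{p_1,p_2\}$, and conjugation acts on this two-point set exactly as described in ($\bullet$).
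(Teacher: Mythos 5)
Your proposal is correct and follows essentially the same route as the paper: both rest on the equivariance $F\circ\overline{\,\cdot\,}=\overline{\,\cdot\,}\circ F$, the conjugation-invariance of the Euclidean ball $W$ centered at $q\in\mathbb R^n$, and the fact that conjugation permutes the sheets of the covering $F^{-1}(W)\to W$ according to its action on $\{p_1,p_2\}$. The only cosmetic difference is that you track components of $F^{-1}(W)$ in $\mathbb C^n$ and then intersect with the invariant curve $F^{-1}(l)$, whereas the paper decomposes $F^{-1}(W\cap l)$ inside each $F^{-1}(l)$ and runs a connectedness argument there; both hinge on the same global covering-space fact (that $F$ is a finite covering off $F^{-1}(\Sigma)$ and $q\notin\Sigma$), which is the ingredient guaranteeing that each $U_j$ is a full component of $F^{-1}(W)$ and which you should cite explicitly rather than only a covering property near $\{p_1,p_2\}$.
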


We proceed with the proof of  Lemma \ref{primeiro}.  
By hypothesis, $F(\overline z)=\overline{F(z)}$ ($F$ has real coefficients) and $\overline l=l$. Also, we chose $W$ to be an Euclidean ball centered at $q\in \mathbb R^n\setminus \Sigma$, $W\cap \Sigma=\emptyset$, so that $W=q+W_0$ where  $W_0$ is an Euclidean ball centered at  $0$. In particular, since conjugation is an Euclidean isometry,   $\overline {W}_0=W_0$.  Hence,  since $q\in \mathbb R^n$, one has
\begin{eqnarray*} \overline W= \overline{(q+W_0)}=\overline q + \overline {W_0}=q+W_0=W. \end{eqnarray*}
In order to establish  the relation $F^{-1}(l)=\overline{ F^{-1}(l)}$, observe
\begin{eqnarray*} z\in F^{-1}(l) \Leftrightarrow \overline {F(z)}\in \overline l\Leftrightarrow  F(\overline z)\in l \Leftrightarrow  \overline z \in F^{-1}(l)\Leftrightarrow  z\in \overline{F^{-1}(l)}. \end{eqnarray*}
Since $\mathbb C^n\setminus F^{-1}(\Sigma)\stackrel{F}{\to} \mathbb C^n\setminus \Sigma$ is a finite covering, say of degree $N$, if the radius of $W$ is small enough then 
\begin{eqnarray} \label {covering}F^{-1}(W\cap l)=\bigcup_{n=1}^{N} U_n^l,\end{eqnarray}
where the $U_n^l$ are pairwise disjoint and are applied biholomorphically by $F$ onto $W\cap l$.
To establish  $U_1^l = \overline {U_1^l}$   in (\ref{another conjugation}), and similarly for $U_2^l$,   observe  that 
\begin{eqnarray} \label{infinite union} z\in U_1^l \Rightarrow F(\overline z) =\overline{F(z)}\in \overline {F(U_1^l)}=\overline {W\cap l}=W\cap l 
\Rightarrow \overline z \in F^{-1}(W\cap l)= \bigcup_{n=1}^{N} U_n^l.
\end{eqnarray}
For each $n$, the set $ A_n
=\{z\in U_1^l :\overline z\in U_n^l\}$ is open in $U_1^l$, $A_n\cap A_m=\emptyset$ if $n\neq m$ and, by (\ref{infinite union}), $U_1^l=\cup_{n=1}^{N}A_n$. 
By connectedness, there exists a unique $n\geq 1$ such that $U_1^l=A_n$, so that  $U_1^l=\overline{U_n^l}$. Similarly, there is a unique  integer $m$ such that 
$U_2^l=\overline{U_m^l}$.
Since  $p_1 $, $p_2$  belong to $\mathbb R^n$, $U_1^l\cap \overline{ U_1^l}\neq \emptyset$ and  $U_2^l\cap \overline{ U_2^l}\neq \emptyset$. Hence, $n=m=1$ and so
$U_1^l=\overline{U_1^l}$,   $U_2^l=\overline{U_2^l}$. The proof of $T_j^l=\overline{(T_j^l)}, \;\; \; j=1,2$ runs along similar lines. This concludes the proof of  a). 

For the proof of b)  one follows the above arguments  up to the existence of a unique $n$ such that $U_1^l=\overline{U_n^l}$. As we are assuming $p_1, p_2 \in \mathbb C^n \setminus \mathbb R^n$, $\overline p_2=p_1$, the intersection $U_1^l \cap \overline{U_2^l} $ is non-empty, and therefore one has $U_1^l=\overline {(U_2^l)}$. In a similar way one obtains $T_1^l=\overline{(T_2^l)}.$  This concludes the proof of Lemma \ref{primeiro}.

\vskip10pt

To continue with the proof of Theorem \ref{newJac}, assume first  that $p_1, p_2\in \mathbb R^n$ and for $l\in \mathbb R \mathbb P^1$ consider  the function
\begin{eqnarray*} v_l: F^{-1}(l)\setminus (U_1^l\cup U_2^l), \;\;  v_l(z)=u_l(\overline z).\end{eqnarray*}

That  $v_l$ is well-defined follows from (\ref{conjugation}). Writing  $z=\phi(\zeta)\in  F^{-1}(l)\setminus \overline{(U_1^l\cup U_2^l)}$, where  $\phi$ is a  local holomorphic coordinate, and observing that the composition of a harmonic function with an anti-holomorphic function is still harmonic, one sees that $v_l$ is harmonic in the interior of its domain. 

 Using (\ref{conjugation}) again, one sees that $v_l$  extends continuously to the boundary  with  values  $0$ and $1$  on $T_1^l$ and $T_2^l$, respectively. 
 By the uniqueness of the solution of the Dirichlet problem, it follows  that $v_l=u_l$. Hence, for all $z$ in the domain of $u_{l}$, i.e. $F^{-1}(l)\setminus  {(U_1^l\cup U_2^l)}$, one has 
\begin{eqnarray} \label{first u} u_l(z)=u_l(\overline z).\end{eqnarray} 

If $\overline l=l$ and $\beta:(-\epsilon , \epsilon) \to F^{-1}(l)\setminus \overline{(U_1^l\cup U_2^l)}$ is a smooth curve with $\beta(0)=z$ and $\beta'(0)=w$, differentiation of 
$u_l(\beta(t))=u_l(\overline {\beta (t)})$ yields $du_l(z)w=du_l(\overline z) \overline w.$ 
Equivalently, since conjugation is an Euclidean isometry, 
\begin{eqnarray*} \langle \nabla u_l(z), w\rangle= \langle \nabla u_l(\overline z), \overline w \rangle =\langle \overline{\nabla u_l(\overline z)}, w\rangle. \end{eqnarray*}

As  $w$ is arbitrary, this implies $\nabla u_{l}(z)=\overline{\nabla u_l(\overline z)}$, and so 
\begin{eqnarray}\label{nabla} \overline {\nabla u_{l}(z)}={\nabla u_l(\overline z)}, \;\;\; \overline l=l. \end{eqnarray}

Set  $\xi_0(l)=DF(p_3)\nabla u_l(p_3)$, $l\in \mathbb R \mathbb P^1$. It follows from the proof of Theorem \ref{parabola dois} in section 6 that $\xi_0$ is continuous. Also, since  $F^{-1}(l)$ is conformal to a finitely punctured $\mathbb C \mathbb P^1$,  
 $\xi_0$ is nowhere zero.

Recalling   that  $\overline p_3=p_3$ and  $\overline l=l$, we have, after   using (\ref{nabla})  and the fact that the matrix $DF(p_3)$ has real entries because the components of $F$ have real  coefficients:
\begin{eqnarray*}
\overline {\xi_0(l)}=\overline{DF(p_3)\nabla u_{ l}(p_3)}=\overline{DF(p_3)} \;\; \overline {\nabla u_{l}(p_3)}=DF(p_3)\nabla u_l(\overline{p_3})=DF(p_3)\nabla u_l(p_3)=\xi_0(l). 
\end{eqnarray*}
As $\xi_0$ is continuous and nowhere zero,  this contradicts   Lemma \ref{conjugation twice}. This concludes the proof of  Theorem \ref{newJac} under the assumption that $p_1, p_2\in 
\mathbb R^n$. 

Let us now deal with the remaining alternative, namely $p_1=\overline {p_2}$. In this case, conjugation switches  the $U's$ and the $T's$.  More precisely,  
by (\ref{another conjugation}) we have   $U_1^l=\overline {(U_2^l)}, T_1^l=\overline{(T_2^l)}$. 

If we  consider the function $v_l$ to be defined as before, $v_l(z)=u_l(\overline z)$, then $v$ would be harmonic but the boundary conditions $0$ and $1$ would not correspond to the  same  boundary components as the original function $u$. In particular, one cannot use the uniqueness of solutions of the Dirichlet problem to conclude that $u_l$ and $v_l$ are the same.

To circumvent  this difficulty, in the  case where $p_1=\overline {p_2}$ we define a $\underline {\rm new}$ $v_l$ by 
\begin{eqnarray} v_l(z)=1-u_l(\overline z). \end{eqnarray}
 It is now clear that  $u_l$ and $v_l$ are harmonic and have the same boundary values. By uniqueness, $u_l=v_l$, and so 
 \begin{eqnarray} \label {second u} u_l(z)=1-u_l(\overline z). \end {eqnarray}
 In particular, if $z=\overline z$ lies in  the domain of $u$ one has  $u(z)=\frac{1}{2}$ and therefore  we are dealing with a  level line. Since $F^{-1}(l)$ is conformally a punctured sphere, $\nabla u_l$ never vanishes and is perpendicular to the above level line, whereas  $i\nabla u_l$ is tangent.
With this in mind, we define  a new $\xi_0$ by setting 
 \begin{eqnarray} \label{new x} \xi_0(l)=DF(p_3) i \nabla u_l(p_3)=iDF(p_3)  \nabla u_l(p_3), \; l\in \mathbb R \mathbb P^1. \end{eqnarray}
The second equality follows because $F$ is holomorphic. Repeating the arguments given after (\ref{first u}),  but using  (\ref{second u}) as the starting point instead of $u_l(z)=u_l(\overline z)$ , we obtain
\begin{eqnarray}\label {with a minus} \overline {\nabla u_{l}(z)}=-\nabla u_l(\overline z) \end{eqnarray}

To finish the proof, we must establish $\xi_0(l)=\overline {\xi_0(l)}$ whenever $l=\overline l$. To check this, use $p_3=\overline{p_3}$, (\ref{new x}) and  (\ref{with a minus}),  to see that $\overline {\xi_0(l)}$ equals
\begin{eqnarray*}
\overline{DF(p_3)i\nabla u_{ l}(p_3)}=(-i)\overline{DF(p_3)} \;\; \overline {\nabla u_{l}(p_3)}=(-i)DF(p_3)(-1)\nabla u_l(\overline{p_3})=iDF(p_3)\nabla u_l(p_3), 
\end{eqnarray*}
And so  $\xi_0(l)=\overline {\xi_0(l)}$ whenever $l=\overline l$, as desired. As before, this contradicts Lemma \ref{conjugation twice}, and  the proof of Theorem \ref{newJac} is finished.
\vskip10pt

\section {A conjecture in    approximation theory} 
\vskip10pt
\noindent In this section we make  some observations    concerning the Jacobian conjecture  and its relation to the  ideas discussed in this paper. 

In the absence of a proof that polynomial local biholomorphisms  are injective, one might want to  aim for a less ambitious goal, and try to settle the question at  the  ``next level" of difficulty.  Namely, one would like to show at least that,  under the assumptions of (JC),   the pre-images of \it all  complex lines  are connected. \rm 

This can be thought of as a ``vestigial"  form of injectivity, since   actual injectivity means that the pre-images of \it all points  are connected \rm (see  section 2). Note that,  as discussed in Example 5, subsection 2.5, the pre-images of all real hyperplanes are always connected in the (JC) setting.

However, even the  simpler problem of showing that \it all \rm complex lines in $\mathbb C^2$ in  a family of parallel lines pull-back into connected complex curves  seems  to be open at this time. More precisely, if $(f,g):\mathbb C^2 \to \mathbb C^2$ is as in the Jacobian conjecture   then,  to the best of our knowledge,  it is only known that the curve $f+c=0$ is connected for  \it all but at most finitely many \rm  values of  $c\in \mathbb C$. 

Equivalently, one would like to show at least that  if $(f,g)$ is any Jacobian pair, then $f$ and $g$ are irreducible polynomials. Given its  seemingly simpler nature,     solving this  last problem  in the affirmative  would be a reasonable litmus test for the veracity of the Jacobian Conjecture.

The techniques in this paper, meant  to deal with the less restrictive case of injectivity of local diffeomorphisms,  offer no indication of why (JC) should  hold. Be that as it may,  our  results  actually hint,  ever so gently,  at the possibility that (JC) might actually be false. Indeed,  the potential lack of connectedness of the pre-images of complex lines, and  the possibility that even if these complex curves are connected they may have positive genera, appear  as major  stumbling blocks. 

Various conjectures have been formulated over the years   whose validity would imply that of (JC).  Here, in view of the previous comments,  we go in the opposite direction by formulating a natural conjecture in approximation theory which, if true,    would imply that (JC)  $\underline {\rm fails}$, in principle without the onus of having to exhibit a counterexample.   The new viewpoint  is conceptually related to  the simple fact that  any holomorphic map 
$\mathbb C^n \to \mathbb C^n$  can be uniformly approximated,  over any given compact subset, by polynomial maps. 

We juxtapose  (JC) to  the new conjecture,   in case  the  reader feels inclined 
to  ponder  if  one statement is more likely to be true  than the other. 
\vskip10pt
\noindent \bf The Jacobian conjecture. \rm Every  polynomial map   $F:\mathbb C^n \to \mathbb C^n$, $n\geq1$,   with constant non-zero Jacobian determinant is injective.
\vskip10pt
\noindent \bf The polynomial approximation conjecture. \rm Every holomorphic map  $G:\mathbb C^n \to \mathbb C^n$, $n\geq 1$,  with constant non-zero Jacobian determinant  can be uniformly approximated over  compact subsets  by  polynomial maps   $F:\mathbb C^n \to \mathbb C^n$  with constant non-zero Jacobian determinant. 
\vskip10pt
When $n=1$ both  statements are trivially true. To see why the validity of the second conjecture,  for some  $n\geq 2$,   implies that (JC) is false,   observe that in finite dimensions if a sequence of injective local diffeomorphisms  converges  uniformly over compact subsets to a local diffeomorphism, then the limit map  is also injective (an easy argument using degree theory can be found in  \cite [p.411]{SX}). 

Since  the obviously non-injective map $G:\mathbb C^2 \to \mathbb C^2$,  $G(z_1,z_2)=(e^{z_1}, z_2e^{-z_1})$,  has Jacobian determinant one (similarly for $G(z_1, \dots, z_n)=(e^{z_1}, z_2 e^{-z_1}, z_3, \dots, z_n)$ when $n\geq 3$), the tail of  a hypothetical sequence of polynomial local  biholomorphisms converging  uniformly to $G$ over a ball containing a pair of points with the same image under $G$  would necessarily  contain non-injective maps, which would disprove (JC). For more on these ideas, see problem xv), section 11.

\vskip10pt

\section{Invertibility    in dynamics, differential geometry,  and complex analysis
}

We conclude this paper by drawing  attention to  some  problems  involving global injectivity, mostly from analysis, geometry and dynamics. 
The list below is far from exhaustive and, for the most part, we are leaving out  problems of an algebraic origin. For these, the reader should consult the extensive and ever evolving bibliography on the Jacobian conjecture.
\vskip20pt
\noindent i) (The Jacobian conjecture) Every polynomial local biholomorphism $\mathbb C^n \to \mathbb C^n$ is injective. 
\vskip10pt
\noindent  ii) (The weak Jacobian conjecture)  For  any given polynomial local biholomorphism $F:\mathbb C^n \to \mathbb C^n$  there is at least one 
$q\in \mathbb C^n$ that is covered precisely once by $F$.  This holds when $F$ is a   Dru\.{z}kowski-Jag\v{z}ev map; see Example 1,  section 2 ($q=0$ is covered only once). Can one work ``backwards" in the proofs of the reduction theorems  and conclude that every $F$   satisfying the hypotheses of (JC)  admits at least one fiber that is a singleton?
\vskip10pt
\noindent iii) The global asymptotic stability conjecture of Markus-Yamabe) and subsequent developments \cite{F}, \cite {AG}, \cite {CG}.
\vskip 10pt
\noindent iv)  The  geometrization of the Gutierrez  spectral inverse function theorem in terms of Cartan-Hadamard surfaces \cite{CG}),  \cite{X4}.
\vskip 10pt
\noindent v)  The Gale-Nikaid\^o conjecture states that if $Q^n$ is an $n$-rectangle in $\mathbb R^n$ and $F:Q^n \to Q^n$ is a local diffeomorphism for which the principal minors of $DF(x)$ do not vanish on $\partial Q^n$, then $F$ is injective. This type of problem comes up in mathematical economics, namely in the study of large scale supply-demand systems. See \cite {GN} for the original work, in two dimensions.
\vskip 15pt
\noindent vi)   The classification of  \it injective conformal harmonic maps $\mathbb R^2 \to \mathbb R^3$\rm.  The images of these maps are embedded simply-connected minimal surfaces. When complete,  these surfaces must be   planes and helicoids. 

This was first established for properly embedded surfaces by Meeks-Rosenberg \cite{MR}, using  of the Colding-Minicozzi theory of embedded minimal discs (the latter appeared in a series of papers in the Annals of Mathematics -- see, for instance,  \cite{CoMi}).

One expects that the conformal type being $\mathbb C$  alone (without the geometric assumption of completeness) should  be enough to characterize these minimal surfaces. 
Explicitly,  we propose:
\vskip5pt
\noindent \it Conjecture. \rm  The plane and the helicoid as the only  embedded minimal surfaces in $\mathbb R^3$ that are conformal to $\mathbb R^2$.
\vskip10pt

The scarcity of examples of embedded minimal surfaces conformal to $\mathbb R^2$  is reminiscent of  the fact that the functions $z \to az+b$, $a\neq 0$, and their conjugates,   are the only 
\it injective conformal harmonic maps $\mathbb R^2 \to \mathbb R^2$ \rm (there is some redundancy in the last statement due to the Cauchy-Riemann  equations).  Notice  the similarity between the  two  italicized sentences above,  and that the  dimensions of the targets went from 3 to 2. 

There are reasons to believe that there may be  a common thread between these  two statements. A Riemannian Bieberbach estimate \cite{FX} might be a starting point. Indeed, applying the classical Bieberbach estimate $|f''(0)|\leq 4|f'(0)|$, valid for injective holomorphic maps in the unit disc, to suitable scalings of an injective entire function $g$, one can  show that $g$ must be  an affine linear map. 

What is lacking in the study   of parabolic simply-connected  embedded minimal  surfaces is a well-chosen   application of \cite {FX}, together with a suitable scaling process that, in the limit, gives rise to only ruled minimal surfaces. By a classical theorem of Catalan, these are   planes and helicoids.
\vskip 10pt
\noindent vii)  The study  of $\text{Aut}(\mathbb C^n)$, the group of biholomorphisms of $\mathbb C^n$, when $n\geq 2$  \cite{AL}. One can view (JC) as part of this much larger question. 
\vskip 10pt

\noindent 
\noindent viii) The  rigidity of the identity in $\mathbb C^n$,  $n\geq 2$,   among injective entire maps \cite{X2}. This problem relates to vii). 
When $n=1$,  ``rigidity of the identity" (the statement $f:\mathbb C \to \mathbb C$ holomorphic and injective, $f(0)=0$, $f'(0)=1$, implies $f(z)=z$) leads to the description  of $\text {Aut} (\mathbb C)$ as the group of affine linear maps, via the argument sketched in vi). This opens up the possibility that a suitable strengthening  of  the rigidity result from \cite{X2} may lead to some kind of description, albeit a necessarily complicated one, of $\text{Aut}(\mathbb C^n)$ when $n\geq 2$.

In view of \cite{X2} we conjecture that any  biholomorphism $F \in \text{Aut} (\mathbb C^n)$ is   uniquely determined by only   two   data, one of which is of a global nature: Its  $1$-jet  at $0$   
and the Levi matrix (complex Hessian) of  
$$ \widehat F\stackrel{def}{=}\log ||DF(0)^{-1}(F-F(0))||.$$  
Formally,  we conjecture that  the map
$$\Lambda_n: \text{\rm Aut} (\mathbb C^n) \rightarrow \mathbb C^n \times GL(n, \mathbb C) \times \mathcal \rm{C^{\omega}}(\mathbb C^n - \{0\}, \mathbb C^{n\times n}),$$  where 
$\Lambda_n(F)=\big(F(0), DF(0), 
\big({ \widehat F _{{z_j}\overline{z}_k}} \big )\big )$,
is   injective for all $n\geq 1$.

 If  so,  the parametrization   $(\Lambda_n)^{-1}: \text{\rm Ran} (\Lambda_n) \to \text{\rm Aut} (\mathbb C^n)$  would exhibit $\text{\rm Ran} (\Lambda_n)$ only as a \it proto \rm moduli space (as opposed to a full-fledged moduli space)   since at present an intrinsic description 
of $\text{\rm Ran} (\Lambda_n)$ (one that does not involve $\Lambda_n$)  is only available when $n=1$. 
Indeed,  the plurisubharmonic   function $\widehat F$ is  actually  harmonic if $n=1$, and  so    $\text{\rm Ran} (\Lambda_1)=\mathbb C\times \mathbb C^{*}\times \{0\}$. Thus, $ \Lambda_1(F)=\Lambda_1(F(0)+DF(0))$, and by injectivity  $ F=F(0)+DF(0)$. In this way one recovers  one recovers  the standard description $ \text{\rm Aut} (\mathbb C)=\text{\rm Aff} (\mathbb C)$. 
 
\vskip 10pt
\noindent ix)  The Dixmier conjecture \cite{BK}.
\vskip 10pt
\noindent x)  Injectivity on metric spaces \cite {Gutu}-\cite{GGJ}.
\vskip 10pt
\noindent xi) Connections between  global invertibility and dynamical systems motivated by   Pinchuk's example \cite{Pi} of a non-injective polynomial local diffeomorphism of $\mathbb R^2$. For instance, in the partly expository paper \cite{MX} it is shown that a polynomial local diffeomorphism $F=(F_1, F_2):\mathbb R^2 \to \mathbb R^2$ 
is invertible if the polynomial 
$F_1\in \mathbb R[x, y]\subset \mathbb C[x, y]$ has one place at infinity. 

Explicitly, $F$ is injective provided that the (possibly singular) curve 
$\mathcal{C}=\{F_1=0\}\subset \mathbb C^2$ is irreducible, its  projectivization $\widetilde{\mathcal{C}}$ has only one point on the line at infinity, and the said  point is covered only once by a desingularization $\mathcal{R}\to \widetilde{\mathcal{C}}$. 

This theorem generalizes to the case of local diffeomorphisms a well-known application of the Abhyankar-Moh theory in algebraic geometry to the Jacobian conjecture in $\mathbb C^2$. Underlying the proof are some ideas motivated by the solution by C. Guti\'errez \cite{CG} of the Markus-Yamabe global stability conjecture in dynamical systems, in particular the notion of half-Reeb components.

In another theorem of  \cite{MX} it is shown that even if a polynomial local diffeomorphism $F:\mathbb R^2 \to \mathbb R^2$ is not globally injective (say, as in Pinchuk's example), for every $r>0$ there is a disc $B_r\subset \mathbb R^2$ of radius $r$ such that the restriction $F|B_r$ is injective. 
It would be of interest to  investigate under what conditions this property persists for polynomial self-maps of $\mathbb R^2$ which don't fail too badly the local diffeomorphism condition.
\vskip 10pt
\noindent xii) Geometric estimates related to invertibility (\cite{B2}, \cite {X1}).  Using  a dynamical interpretation,   and taking into account the closures of the Grassmannian-valued Gauss maps of submanifolds of $\mathbb R^n$, the results in \cite{X1} extend to  the non-linear setting  the familiar intersection properties of affine subspaces (a sort of ``non-linear version" of linear algebra). 

For instance, a special case of the main result of \cite{X1} states that two properly immersed  surfaces in $\mathbb R^3$ will intersect  provided that  the closures  in projective space of the images of their unoriented  Gauss maps do not intersect. It would be of interest to extend these results to metric spaces.  
\vskip10pt
 \noindent xiii) The possibility of establishing  infinite-dimensional versions, perhaps  for  Fredholm operators, of  certain invertibility phenomena (\cite{B1}, \cite{B2}, \cite{BK}, \cite{NX2},  \cite{R}, \cite{SX}, \cite{X1}). A deeper understanding might lead to new existence and uniqueness results for non-linear operators.
 \vskip10pt
 \noindent xiv)  It  was conjectured  in \cite{NX2}  that a local diffeomorphism $F:\mathbb  R^n \to  \mathbb  R^n$  must be injective if it has the property that  the pre-image of every    affine hyperplane is a connected set (compare with Example 4,  subsection 2.4).
 If true, this conjecture would imply the Jacobian conjecture, via a Bertini-type theorem (see Example 5,  subsection 2.5). 
 
 After the conclusion of this work we received a  very  interesting paper by Braun, Dias and Venato-Santos  \cite{B}, where the authors construct an explicit  counterexample to the above-mentioned conjecture from \cite{NX2}, namely  the map 
$$ F:\mathbb R^3 \to \mathbb R^3, \;F(x,y,z)=(z^5+e^x \cos y, z^3+e^x \sin y, z).$$  
 
As explained in \cite{B}, the pre-image of any  plane $\pi$ is either  homeomorphic to $\mathbb R^2 $ 
 (in fact, conformal to $\mathbb C$), or it is homeomorphic to $S^2$ with infinitely many points removed.  
 
 For a fixed $\pi $ with $F^{-1}(\pi)$ non-simply-connected, it is possible to excise a closed set (a union of curves) from $F^{-1}(\pi)$ so as to make the complement $1$-connected.  It would be of interest to examine the question of whether one can produce a  \it single  closed set \rm  $A\subset \mathbb R^3$  that  would cut   the pre-image of  \it every \rm plane through a base point  in a way that the complement (in the pre-image) of the cut is  $1$-connected.

 More precisely, one seeks $q\in F(\mathbb R^3)$ with  $\#F^{-1}(q)>1$, and a closed set $A\subset \mathbb R^n$ such that $F^{-1}(q)\cap A=\emptyset$
and  $F^{-1}(\pi)- F^{-1}(\pi)\cap A$ is  $1$-connected  for every plane $\pi$ passing through  $q$.    

If this can be arranged,  the pre-image of every plane $\pi$ through $q$ under the map $\mathbb R^3- A\stackrel{F}{\rightarrow} \mathbb R^3$ would be topologically a plane and yet $\#F^{-1}(q)>1$.  This would show that the parabolicity hypothesis  is essential in Corollary \ref{injectivity}.  Since, in the above construction,  the prei-mage of a plane  is made simply-connected by means of cuts, it is indeed natural that the conformal type of the pre-image  be that of the open unit disc in $\mathbb C$, rather than $\mathbb C$ itself, in accordance with Corollary \ref{injectivity}.

\vskip 10pt
\noindent xv) Let $\mathcal P$  denote the family of all polynomial self-maps of $\mathbb C^n$ into itself with  constant Jacobian determinant $1$. The Jacobian conjecture claims that $\mathcal P \subset \text{Aut} (\mathbb C^n)$.  Here, we consider the closure of $\mathcal P$ relative to the natural topology   on $H(\mathbb C^n, \mathbb C^n)\subset C(\mathbb C^n, \mathbb C^n)$ that  corresponds to uniform convergence on compact subsets.   It follows from the Baire category theorem that $\mathcal P$ is not closed if $n\geq 2$.  What can then be said about   the  set 
$\overline{\mathcal P}-\mathcal P$ ? By the topological degree argument from section 10, if one assumes the validity of  (JC)  then $\overline{\mathcal P}$  contains only injective local biholomorphisms. In particular, one would like to know if  $\overline{\mathcal P}-\mathcal P$  contains the classical Fatou-Bieberbach example of an injective entire self-map of 
$\mathbb C^2$  whose image omits an open ball.

\vskip20pt
\noindent \text{Frederico Xavier}\\ }
 \noindent \text{Department of Mathematics}\\
 \text{Texas Christian University}\\
 \text{f.j.xavier@tcu.edu}\\


\begin{thebibliography}{s2}

\bibitem{AL} E. And\'ersen, L. Lempert, {\em On the group of holomorphic automorphisms of $\mathbb C^n$}, Invent. Math. {\bf{110}} (1992), 371-388.


\bibitem{ABR} S. Axler, P. Bourdon, W. Ramey, {\em Harmonic Function Theory}, Graduate texts in mathematics, Springer {\bf{137}} (2001).


\bibitem{B1} E. Balreira, {\em Foliations and global inversion}, Commentarii Mathematici Helvetici,
{\bf {85}} (2010),  73-93.

\bibitem{B2} E. Balreira, {\em Incompressibility and global inversion}, Topol.  Methods Nonlinear Anal.
{\bf {35}} (2010),  no.1, 69-76.

\bibitem{BCW} H. Bass, E. Connell and D. Wright,
{\em The Jacobian conjecture: reduction of degree and formal
expansion of the inverse}, Bull. A.M.S.
{\bf 7} (1982),  287-330.

\bibitem{B} F. Braun, L.R.G. Dias, J. Venato-Santos, {\em A counterexample to a conjecture of Nollet and Xavier}, to appear in Proc. Amer. Math. Soc.

\bibitem{HC} H. Cartan, {\em Sur les transformations localement topologiques}, Acta Litt. Szeged {\bf{6}}, (1933), 85-104.

\bibitem{C} S. S. Chern, {\em An elementary proof of the existence of isothermal parameters on a surface}, Proc. Amer. Math. Soc. {\bf{6}} (1955), 771-782.

\bibitem{CX} C. Chen, F. Xavier, {\em Finiteness of prescribed fibers of local biholomorphisms: a geometric approach}, Math. Ann. {\bf{362} }(2015), 1001-1029.

\bibitem{CM} M. Chamberland, G. Meisters, {\em A mountain pass to the Jacobian conjecture}, Canad. Math. Bull. {\bf{41}}
(1998), 442-451.

\bibitem{CoMi} T. H. Colding, W. P. Minicozzi II, {\em The space of embedded minimal surfaces of fixed genus in a $3$-manifold IV; Locally simply-connected}, Ann. of Math. {\bf{160}}
 (2004), 573-615.
 
\bibitem{D} L. M. Dru\.{z}kovski, {\em The Jacobian conjecture: symmetric reduction and solution in the symmetric cubic linear case}, Ann. Polon. Math. {\bf{87}} (2005), 83-92.

\bibitem{E} A. van den Essen, {\em Polynomial automorphisms and the Jacobian conjecture},  Progress in mathematics, Birkhauser Verlag {\bf {190}}  (2000).

\bibitem{F} R. Fessler, {\em A proof of the two-dimensional Markus-Yamabe stability conjecture and a generalization}, Ann. Polon. Math. {\bf{62}} (1995), 45-74.

\bibitem{FX}  F. Fontenele, F. Xavier,  {\em A Riemannian Bieberbach estimate}, Journal of Differential  Geometry,
{\bf{85}}  (2010), 1-14.

\bibitem{LX} G. Li,  F. Xavier, {\em Nonpositive curvature and  global invertibility of maps},  J. Geom. Anal. {\bf{24}} (2014), 1181-1200.

\bibitem{GN} D. Gale, H. Nikaid\^o, {\em The Jacobian matrix and global univalence of mappings}, Math. Ann. {\bf{159}} (1965), 81-93.

\bibitem{GMP} F.W. Gehring, G.J. Martin, B.P. Palka, {\em An Introduction to the Theory of Higher-dimensional Quasiconformal Mappings}, Mathematical Surveys and Monographs  {\bf{216}} American Mathematical Society (2017).

\bibitem{GT} D. Gilbarg, N.S. Trudinger, {\em Elliptic Partial Differential Equations of Second Order}, Springer-verlag (1983).

\bibitem{AG} A. A. Glutsyuk, {\em A complete solution of the Jacobian problem for vector fields on the plane}, Russian Math.  Surveys {\bf{49}}
(1994), 185-186.

\bibitem{CG} C. Gutierrez, {\em A solution to the bidimensional global asymptotic stability conjecture}, Inst. H. Poincar\'e Anal. Non Lin\'eaire {\bf{12}} (1995), 627-671.


\bibitem{Gutu} O. Gut$\acute{\text{u}}$, {\em Global inversion theorems via coercive functionals on metric spaces},  Nonlinear Anal. {\bf{66}}
(2007), no. 12, 2688-2697.


\bibitem{GJ} O. Gut$\acute{\text{u}}$, J. Jaramillo, {\em Global homeomorphisms and covering projections on metric spaces}, Math. Ann. {\bf{338}}
(2007), no. 1, 75-95.


\bibitem{GGJ} I. Garrido, O. Gut$\acute{\text{u}}$, J. Jaramillo,  {\em Global inversion and covering maps on length spaces}, Nonlinear Anal. {\bf{73}}
(2010), no. 5, 1364-1374.

\bibitem{H} M. Hirsch, {\em Differential topology}, Graduate Texts in Mathematics, Springer-Verlag, 1976.


\bibitem{FJ} F. John,  {\em On quasi-isometric mappings. $I$}, Comm. Pure Appl. Math., {\bf{21}} (1968), 77-110.

\bibitem{JH} J. Hadamard, {\em Sur les transformations ponctuelles}, Bull. Soc. Math. France {\bf{34}} (1904), 71-84.


\bibitem{Ja} A. V. Jag\v{z}ev,  {\em On a problem of O.-H. Keller}, Sibirsk. Mat. Zh., {\bf{21(5)}}, (1980), 141-150.


\bibitem{J} J. Jost, {\em Compact Riemann surfaces, An Introduction to Contemporary Mathematics}, Springer, (1997).

\bibitem{BK}  A. Belov-Kanel, M. Kontsevich, {\em Automorphisms of the Weyl algebra}, Lett. Math. Phys. {\bf{74}} (2005), 181-199.

\bibitem{GK} G. Katriel,  {\em Mountain pass theorems and global homeomorphism theorems},  Ann. Inst. H. Poincar$\acute{\text{e}}$ Anal. Non Lin$\acute{\text{e}}$aire, {\bf{11}} (1994) no. 2, 189-209.

\bibitem{M} W. Massey, {\em Algebraic Topology: An Introduction}, Harcourt, Brace \& World,  1967.

\bibitem{MR} W. H. Meeks, H. Rosenberg, {\em The uniqueness of the helicoid}, Annals of Math., {\bf{161}}, (2005), 727-758.

\bibitem{MX} L. F. Mello, F. Xavier, {\em Dynamics, points and places at infinity, and the invertibility of polynomial self-maps of $\mathbb R^2$}, to appear in Expositiones Mathematicae.

\bibitem{NTX} S. Nollet,  L. Taylor and F. Xavier,  {\em Birationality of \'etale maps via surgery},  J. reine angew. Math., {\bf 627} (2009), 83-95.

\bibitem{NX1} S. Nollet,  F. Xavier,  {\em Holomorphic Injectivity and the Hopf map}, Geometric and Functional Analysis, {\bf{14}} (2004), 1339-1351.

\bibitem{NX2} S. Nollet, F. Xavier,  {\em Global inversion via the Palais-Smale
condition}, Discrete and continuous Dynamical systems, {\bf {8}}
(2002) 17-28.

\bibitem {OR} E. Outerelo, J.M. Ruiz,  {\em Mapping degree theory}, Graduate Studies in Mathematics, volume 108,  American Mathematical Society and Real Sociedad Matem·tica Espa$\rm \tilde n$ola (2009).


\bibitem{Pi} S. Pinchuk,  {\em A counterexample to the strong real Jacobian conjecture}, Math. Z.,  {\bf {217}} (1994),
1-4.

\bibitem{P} R. Plastock, {\em Homeomorphisms between Banach spaces}, Trans. Amer. Math. Soc. {\bf{200}} (1974), 169-183.

\bibitem{R0} P. J. Rabier, {\em On global diffeomorphisms of Euclidean space}, Nonlinear Anal., {\bf{21}} (1993) no. 12, 925-947.

\bibitem{R} P. J. Rabier, {\em Ehresmann  fibrations and Palais-Smale conditions for morphisms of Finsler manifolds}, Ann. of Math., {\bf{146}} (1997),  647-691.

\bibitem {SX} B. Smyth, F. Xavier, {\em  Injectivity of local diffeomorphisms from nearly spectral conditions},  J. Differential Equations {\bf{130}} (1996), no. 2, 406-414.

\bibitem{X1} F. Xavier, {\em Using Gauss maps to detect intersections},  L'Enseignement  Math{\'e}matique (2), {\bf {53}} (2007), 15-31.


\bibitem{X2} F. Xavier, {\em Rigidity of the Identity}, Comm.  Contemp.  Math., {\bf{9}}, (2007), 691-699.

\bibitem{X3} F. Xavier, {\em The global inversion problem: A confluence of many mathematical topics}, Matem\'atica Contemporanea, a volume in honor of the $80$-th birthday of Manfredo do Carmo, {\bf{35}}, (2008), 241-265.

\bibitem{X4} F. Xavier, {\em Injectivity as a transversality phenomenon in geometries of negative curvature}, Illinois J. Math., {\bf 43}, (1999), 256-263.
\end{thebibliography}
\end{document}